\newif\ifprintver
\theoremstyle{plain}
\newtheorem{ej}{Example}
\theoremstyle{definition}
\newtheorem{defi}{Definition}
\newtheorem{teo}{Theorem}
\newtheorem{prop}{Proposition}
\newtheorem{cor}{Corollary}
\newtheorem{obs}{Remark}
\newtheorem{lem}{Lemma}
\def\Stirling2#1#2{\left\{{\begin{matrix}#1\\#2\end{matrix}}\right\}}
\def\Yoneda{\mathbf{Y}}
\def\yoneda{\mathbf{y}}
\def\Set{\mathbf{Sets}}
\title{Augmented simplicial combinatorics through category theory: subdivisions and cyclinders}
\title{Semi-simplicial combinatorics of cyclinders and subdivisions}
\author{Jos\'e Manuel Garc\'{\i}a Calcines, \\ Luis Javier Hern\'andez Paricio and\\  Mar\'{\i}a Teresa Rivas Rodr\'{\i}guez}
\date{}
\begin{document}

\maketitle

\begin{abstract}

In this work, we analyze the combinatorial properties of cylinders and subdivisions of augmented semi-simplicial sets. These constructions are obtained as particular cases of a certain action from a co-semi-simplicial set on an augmented semi-simplicial set.
We also consider cylinders and subdivision operators in the algebraic setting of augmented sequences of integers. These operators are defined either by taking an action of matrices on sequences of integers (using binomial matrices) or by taking the simple product of sequences and matrices.
We compare both the geometric and algebraic contexts using the sequential cardinal functor $|\cdot|$, which associates the augmented sequence $|X|=(|X_n|)_{n\geq -1}$ to each augmented semi-simplicial finite set $X$. Here, $|X_n|$ stands for the finite cardinality of the set of $n$-simplices $X_n$.
The sequential cardinal functor transforms the action of any co-semi-simplicial set into the action of a matrix on a sequence. Therefore, we can easily calculate the number of simplices of cylinders or subdivisions of an augmented semi-simplicial set. Alternatively, instead of using the action of a matrix on a sequence, we can also compute suitable matrices and consider the product of an augmented sequence of integers and an infinite augmented matrix of integers.
The calculation of these matrices is related mainly to binomial, chain-power-set, and Stirling numbers. From another point of view, these matrices can be considered as continuous automorphisms of the Baer-Specker topological group.

\end{abstract}

{\bf Keywords:} Augmented semi-simplicial set, augmented integer sequence, simplicial combinatorics, cylinder, barycentric subdivision, Baer-Specker group.

{\bf Mathematics Subject Classification (2020):} 05E45, 18M05, 55U10.

\tableofcontents


\section{Introduction}

In our previous work \cite{GHR2022}, we analyzed some combinatorial properties of the category of augmented semi-simplicial sets. We studied the sequential cardinal functor, which associates an augmented sequence $|X|_n=|X_n|$ of non-negative integers with each augmented semi-simplicial finite set $X$. The categories of augmented semi-simplicial sets and augmented sequences of integers admit monoidal structures induced by adequate join products and unit objects.
This fact enabled us to easily calculate the number of simplices of cones and suspensions of an augmented semi-simplicial set, as well as other augmented semi-simplicial sets that are built by joins.

The more standard category of simplicial sets is also used as a model to study homotopy invariants. The tools used to model sets of homotopy classes in the category of simplicial sets are based on cylinders and subdivisions (the subdivision approximation theorem). For this reason, the main objective of this paper is to analyze the combinatorial properties of subdivisions and cylinders in the category of augmented semi-simplicial sets.

Among others, we can highlight the following goals:

\begin{itemize}
\item The study of some functorial procedures for constructing cylinders and subdivisions for augmented semi-simplicial sets.
\item The analysis of the numerical sequences (sequential cardinals) arising from the combinatorial structure of cylinders and subdivisions of augmented semi-simplicial sets.
\item The search for methods of counting the number of simplices of cylinders and subdivisions of an augmented semi-simplicial set.

\end{itemize}

In order to reach these targets we consider two different kinds of mathematical objects:

\begin{description}
  \item[(i)] Augmented semi-simplicial and co-semi-simplicial objects.

  \item[(ii)] Augmented integer sequences and matrices.

\end{description}

For further information on semi-simplicial sets, we recommend consulting the following references: \cite{EZ50}, \cite{BRS76}, \cite{F12}, \cite{DV06}, \cite{nlabsemi}, \cite{GHR2022}. To learn more about the realization of semi-simplicial sets, \cite{Ebert_2019} is a good resource. For category models related to homotopy theory of simplicial sets and topological spaces, \cite{GZ} is a relevant reference.

One noteworthy aspect of our study is that we are focusing on augmented semi-simplicial objects instead of standard semi-simplicial objects. This minor modification yields more symmetric and simplified structures and formulas, making computations much easier.

As basic combinatorial elements we consider the following ones:

\begin{description}
  \item[(bc)] Binomial coefficients (if $q>p,$ then take $\binom{p}{q}=0$):
$$\binom{p}{q}=\frac{p!}{q!(p-q)!}= \frac{p (p-1) \cdots (p-q+1)}{(p-q)!}$$
which give the number of strictly increasing
maps from the ordered set with $q$ elements $\{1< \cdots< q\}$ to the ordered set of $p$  elements $\{1< \cdots< p\}$. These numbers occur as coefficients in Newton's binomial formula
$$(a+b)^p= \sum_{q=0}^p \binom{p}{q} a^{p-q} b^q$$
\noindent and the coefficients in Pascal's triangle
$$
\begin{array}{ccccccccccccccc}
 &  & &  &  &  &1 &  & &  & &    \cr
 &  & &  &  &  1& &  1& &  & &    \cr
&   & &  &  1&  & 2&  & 1&  & &    \cr
&   & & 1 &  &  3& &  3& & 1 & &    \cr
 &  & 1&  & 4 &  & 6&  &4 &  & 1&    \cr
& 1 & & 5 & & 10 & & 10 & & 5 &  &1   \cr
\cdots & \vdots& \vdots & \vdots  & \vdots & \vdots  & \vdots & \vdots  & \vdots & \vdots  & \vdots  &\vdots  & \cdots   \cr
\end{array}
$$
\noindent which can also be represented by the matrix
$$
\begin{pmatrix}
 &0 & 1 & 2 & 3 & 4 & 5 & 6 & \cdots & \\\hline
0|& \binom{0}{0}& 0& 0& 0 & 0 & 0 & 0 & \cdots  \\
1\,|&  \binom{1}{0} &  \binom{1}{1}& 0& 0 & 0 & 0 & 0 & \cdots     \\
2|&  \binom{2}{0} &  \binom{2}{1} &  \binom{2}{2}  & 0 & 0 & 0 & 0 & \cdots    \\
3|&  \binom{3}{0} &   \binom{3}{1}  &   \binom{3}{2}   & \binom{3}{3}   & 0 & 0 & 0 & \cdots    \\
4|&   \binom{4}{0} &   \binom{4}{1}  &   \binom{4}{2}   & \binom{4}{3}   & \binom{4}{4} & 0 & 0 & \cdots    \\
5|& \binom{5}{0} &   \binom{5}{1}  &   \binom{5}{2}   & \binom{5}{3}   & \binom{5}{4} & \binom{5}{5} & 0 & \cdots    \\
6|& \vdots & \vdots & \vdots & \vdots &  \vdots  &  \vdots  &  \vdots  &  \ddots    \\
\end{pmatrix}
$$
In this work, we have considered some results about binomial numbers from \cite{GKP90}, \cite{GK90}, \cite{D76}, \cite{D77} and \cite{R68}.

\item[(cps)]
The chain-power-set numbers are studied in \cite{NS91}. They are defined as the cardinality of the set of all chains of length $k$ in the poset $\mathcal{P}(X_n)$, where $X_n=\{1,\dots,n\}$, and a chain of length $k$ has the form $\emptyset \subseteq N_0\subset N_1\subset\cdots\subset N_k\subseteq X_n$, where $N_i\neq N_{i+1}$ for $0\leq i\leq k-1$. These numbers are denoted by $a_{n,k}$ and analyzed in \cite{NS91}, which provides interesting results on recurrence formulas and relations between chain-power-set and Stirling numbers (\cite{Co74}).

In subsection \ref{subdivisionofintegers} of our work, we use the notation $\text{cad}[n]_k=a_{n+1,k}$ for these numbers. We also introduce other chain numbers denoted by $\text{cad}^+[n]_p$ and $\breve{\text{cad}}^+[n]_p$, and establish some relations between these different chain numbers and Stirling numbers, which are given in Lemma \ref{relacad} and Theorem \ref{relacadStirling2}.

\item[(s2c)] \label{Stirling} Consider a finite set $F_p=\{1, \cdots, p\}$ and $q \in \mathbb{N}$. A $q$-partition $\mathcal{P}$ of $F_p$ is a family of non-empty subsets $\mathcal{P}=\{C_1, \cdots, C_q\}$ such that $C_1\cup \cdots \cup  C_q=F_p$ and $C_i \cap C_j=\emptyset$ for $i\not=j$. Notice that there are no $q$-partitions of $F_p$ when $q>p$. For $p,q \in \mathbb{N}$, the Stirling numbers of second class are given by
$$
\Stirling2{p}{q}=
\left\{
\begin{array}{lll}
|\{ \mathcal{P}\hspace{2pt}|\hspace{2pt} \mathcal{P}  \text{ is a } q\text{-partition of } F_p \}| & \mbox{if } p \geq  q\geq 1 ,\\ [1pc]
1 & \mbox{if } p= 0, \, q= 0\\ [1pc]
0 & \mbox{ otherwise}
\end{array} \right.
$$
\end{description}

This paper introduces new construction techniques for cylinders and subdivisions, as follows: Given an augmented semi-simplicial set $X$ and a co-semi-simplicial object $Y$ (in the category of augmented semi-simplicial sets), we take the right action of $Y$ on $X$ to obtain the augmented semi-simplicial set $X \widetilde{\vartriangleright} Y$, as defined in Definition \ref{actions}.

In particular, we have introduced co-semi-simplicial objects, $\text{Cil}_0$, $\text{Cil}$, $\text{Cil}_2$ to establish three kinds of cylinders of an augmented semi-simplicial set $X$:
$$X \widetilde{\vartriangleright} \text{Cil}_0,\quad  X \widetilde{\vartriangleright} \text{Cil}, \quad  X \widetilde{\vartriangleright}\text{Cil}_2.$$
The same procedure have been used to create the barycentric subdivision of an augmented semi-simplicial set $X$. We deal with a certain co-semi-simplicial object $ \text{Sd}$, and its action on $X$ is denoted by
$$X \widetilde{\vartriangleright} \text{Sd}$$
\noindent which represents the barycentric subdivision of $X$.

As far as the algebraic context is concerned, we consider categories of augmented sequences of integers with a categorical ring structure admitting actions of certain augmented matrices. This action is determined by using the inverse matrix of the matrix $``\text{bin}"$ associated with binomial transformations. Given a finite augmented sequence $a$ and an augmented  matrix $B $, the \emph{action} of $B$ on $a$ is described in Definition \ref{algebraicaction} by the formula
$$a \widetilde{\triangleright}B:=(a \cdot \text{bin}^{-1}) \cdot B.$$
One of the main techniques used in this paper is based on the fact that the sequential cardinal functor is compatible with action operators (Theorem \ref{tildebigodottildedot}), see Theorem 5 in  \cite{GHR2022}. In other words, if $X$ is an augmented semi-simplicial set and $Z$ is an augmented co-simplicial-object of augmented semi-simplicial sets, then we have
$$|X \tilde{\vartriangleright} Z|=|X| \tilde{\triangleright} |Z|.$$
On the left side of the formula above, $ \tilde{\vartriangleright} $ stands for the action in the geometric setting, whereas on the right side $\tilde{\triangleright}$ represents the action in the algebraic one.

In subsection \ref{geometriccylinders} of this paper, we compute the corresponding augmented matrices of the co-semi-simplicial objects $\text{Cil}$, $\text{Cil}_0$, $\text{Cil}_2$:
$$\text{cil}=|\text{Cil}|, \quad \text{cil}_0=|\text{Cil}_0|, \quad \text{cil}_2=|\text{Cil}_2|.$$
Using properties of chain-power-set numbers and Stirling numbers applied to the co-semi-simplicial object $\text{Sd}$, we prove in Theorem \ref{cadtheorem} that
$$\text{cad}^+=|\text{Sd}|$$
\noindent where $\text{cad}^+$, defined in subsection \ref{subdicsequences}, is given as a slightly different version of the chain numbers.

These results allow us to count effortlessly the number of simplices of an augmented semi-simplicial finite set which is built through action operations. Regarding cylinder objects one can take the matrices
$$\breve {\text{cil}}= \text{bin}^{-1} \cdot  \text{cil}, \quad \breve {\text{cil}}_0= \text{bin}^{-1} \cdot \text{cil}_0,  \quad \breve {\text{cil}}_2= \text{bin}^{-1} \cdot  \text{cil}_2$$
and, with respect to barycentric subdivisions,
$$\breve{\text{sd}}= \breve {\text{cad}}^+= \text{bin}^{-1} \cdot \text{cad}^+$$
\noindent $\text{bin}$ being the augmented matrix of binomial coefficients.

Now, for a given augmented semi-simplicial set $X$ whose cardinal sequence is $a=|X|$, we can compute the cardinal sequences of $\text{Cil}(X), \text{Cil}_0(X),  \text{Cil}_2(X)$ and $\text{Sd}(X)$ by considering the matrix products
$$ a \cdot \breve {\text{cil}}, \quad  a \cdot \breve {\text{cil}}_0, \quad a \cdot  \breve {\text{cil}}_2,  \quad a \cdot  \breve{\text{sd}}.$$
\noindent The first arrows and columns of these matrices are given by
$$
\begin{array}{l |l cccccccc}
\breve{\text{cil}} &-1& 0 & 1 & 2 & 3 & 4 & 5 & \dots \\\hline
-1&1& 0 & 0 & 0 & 0 & 0 & 0 &  \dots&  \\
0&0& 2 & 1 & 0 & 0 & 0 & 0 &\dots& \\
1&0& 0 & 3 & 2 & 0 & 0 & 0 & \dots&  \\
2&0& 0 & 0 &4  & 3 & 0 &  0&\dots&  \\
3&0& 0 &  0& 0 & 5 &  4& 0 & \dots& \\

~~~~~~~~~~\vdots &\vdots & \vdots  &\vdots  & \vdots & \vdots  &\vdots  & \vdots   & \ddots  & \\

\end{array}
$$

$$
\begin{array}{l |l cccccccc}
\breve{\text{cil}_0}  &-1& 0 & 1 & 2 & 3 & 4 & 5 &\dots \\\hline
-1&1& 0 & 0 & 0 & 0 & 0 & 0 &  \dots&  \\
0&0& 2 & 0 & 0 & 0 & 0 & 0 & \dots& \\
1&0& 0 & 3 & 0 & 0 & 0 & 0 & \dots&  \\
2&0& 0 & 0 &4  & 0 & 0 &  0& \dots&  \\
3&0& 0 &  0& 0 & 5 &  0& 0 & \dots& \\
~~~~~~~~~~\vdots &\vdots & \vdots  &\vdots  & \vdots & \vdots  &\vdots  & \vdots   & \ddots  &  \\

\end{array}
$$

$$
\begin{array}{l |l cccccccc}
\breve{\text{cil}_2} &-1& 0 & 1 & 2 & 3 & 4 & 5 &\dots \\\hline
-1&1& 0 & 0 & 0 & 0 & 0 & 0 &  \dots&  \\
0&0& 2 & 1 & 0 & 0 & 0 & 0 &\dots& \\
1&0& 0 &   4 &  4&  1 & 0 & 0 &\dots&  \\
2&0& 0 & 0 & 8 &   12&   6&    1& \dots&  \\
3&0& 0 &  0& 0 & 16  & 32 &  24 &  \dots& \\
~~~~~~~~~~\vdots &\vdots & \vdots  &\vdots  & \vdots & \vdots  &\vdots  & \vdots  & \ddots  & \\

\end{array}
$$

$$
\begin{array}{l | ccccccccc}
 \breve{\text{cad}}^+=\ \breve{\text{sd}} & -1 & 0 & 1 & 2 & 3 & 4 & 5 &   \\ \hline
 -1&1 & 0 & 0 & 0 & 0 & 0 & 0 & \cdots \\
0& 0 & 1 & 0 & 0 & 0 & 0 & 0 &  \cdots \\
1& 0 & 1 & 2 & 0 & 0 & 0 & 0 & \cdots \\
2& 0 & 1 & 6 & 6 & 0 & 0 & 0 &  \cdots \\
3& 0 & 1 & 14 & 36 & 24 & 0 & 0 &  \cdots \\
\vdots & \vdots & \vdots & \vdots & \vdots & \vdots & \vdots & \vdots  & \vdots \\
\end{array}
$$
We note that $\Pi_{i=-1}^{\infty}\mathbb{Z}$, equipped with the product topology, is a topological group and the matrices
$\breve {\text{cil}},$ $\breve {\text{cil}}_0$ and $\breve {\text{cil}}_2$
 induce continuous group automorphisms of this group. However,  the matrix
$  \breve{\text{sd}}$ only induces a continuous group automorphism on the subgroup $\oplus_{i=-1}^{\infty}\mathbb{Z}$.
For more information on the properties of continuous homomorphisms of these Abelian topological groups, we refer the reader to \cite{Baer}, \cite{Specker} and \cite{Ferrer_2017}.

Finally, we also want to highlight the connection between the structure of cylinders and subdivisions and certain crystal molecular structures, where the  dominating features are trans-edge connected tetrahedra. For instance,  if we apply the funtor $\text{Cil}_2$ to the barycentric subdividision $\text{Sd}^2(\Gamma_+[1])$ (see Figure \ref{cildosSddos}), we obtain simplicial models for some crystal structures that are analyzed in \cite{SCHURZ2009110} and \cite{Volkova_2016}.
\begin{figure}[htbp]
\begin{center}
\includegraphics[scale=0.5]{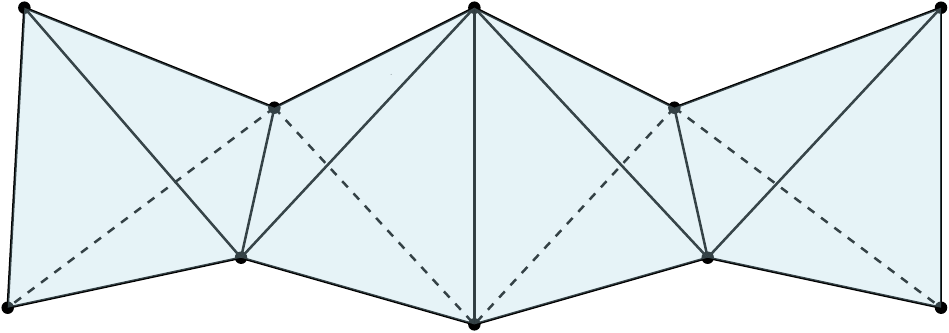}
\caption{Simplicial structure of  $ {{\rm{Cil}}}_2 \rm{Sd}^2(\Gamma_+[1])$}
\label{cildosSddos}
\end{center}
\end{figure}

\section{Augmented semi-simplicial sets and integer sequences}

In this section, we establish some notation and results regarding the categories used in this work.

\subsection{Presheaves}

We denote the category of sets as $\mathbf{Sets}$. Given a small category $\mathbf{C}$ we consider the usual functor category
$\mathbf{Sets}^{\mathbf{C}^{op}}$ which has functors $X:\mathbf{C}^{op}\rightarrow \mathbf{Sets}$ as objects, and natural transformations $f:X\rightarrow X'$ as arrows. The category $\mathbf{Sets^{C^{op}}}$ is commonly referred to as the \emph{category of presheaves on $\mathbf{C}.$ }

For a given object $c$ in $\mathbf{C}$, we can consider the presheaf $\Yoneda(c)$ on $\mathbf{C}$, defined as the contravariant Hom-functor $\Yoneda(c)(-):=\mbox{Hom}_{\mathbf{C}}(-,c)$. This construction gives rise to the well-known Yoneda embedding:
$$\Yoneda:\mathbf{C}\to \mathbf{Sets^{C^{op}}},\hspace{10pt}c\mapsto \mbox{Hom}_{\mathbf{C}}(-,c).$$
In this setting, an embedding is a full and faithfull functor. Any presheaf that is isomorphic to a presheaf of the form $\Yoneda(c)$ is called \emph{representable}. The Yoneda lemma asserts that for any presheaf
$X$, there exists a bijection between the natural transformations $\Yoneda(c)\rightarrow X$ and the elements of $X(c)$:
$$\mathbf{Nat}(\Yoneda(c),X)\stackrel{\cong }{\rightarrow}X(c),\hspace{8pt}\alpha \mapsto \alpha _c(1_c).$$

Associated with $X\in \mathbf{Sets}^{ \mathbf{C}^{op}}$, we have the so-called \emph{category of elements} of $X$, denoted by $\int_{\mathbf{C}}X$. Its objects are pairs of the form $(c,x)$ where $c$ is an object in $\mathbf{C}$ and $x\in X(c)$; a morphism $(c, x)\to (c',x')$ in $\int_{\mathbf{C}}X$ consists of a morphism
$\varphi \colon c\to c'$ in $\mathbf{C}$ such that $X(\varphi)(x')=x$. Observe that we have a projection functor:
$$\pi _X\colon \int_{\mathbf{C}} X \to \mathbf{C},\hspace{8pt}(c,x)\mapsto c.$$

The proof of the following theorem can be found in \cite{MM92}.

\medskip

\begin{teo}\label{ext-pr}
Let $Z\colon \mathbf{C} \to \mathbf{\mathcal{E}}$ be a functor from a small category $\mathbf{C}$ to a cocomplete category $\mathbf{\mathcal{E}}$. Then, the functor $\mbox{Sing}^Z\colon \mathbf{\mathcal{E}}\to \mathbf{Sets}^{\mathbf{C}^{op}}$ given by
$$\mbox{Sing}^Z(Y)(c):=\mbox{Hom}_{\mathbf{\mathcal{E}}}(Z(c),Y)$$ \noindent admits a left adjoint functor $L^Z:\mathbf{Sets}^{\mathbf{C}^{op}}\to \mathbf{\mathcal{E}}$, defined for each presheaf $X$ as
$$L^Z(X):=\mbox{colim}(\int_{\mathbf{C}}X \stackrel{\pi _X}{\longrightarrow}\mathbf{C}\stackrel{Z}\longrightarrow \mathbf{\mathcal{E}})$$
The functor $L^Z$ preserves colimits and makes commutative the following diagram
$$
\xymatrix{ \ar@{^(->}[d]_{\Yoneda} {\mathbf{C}} \ar[r]^Z &  {\mathbf{\mathcal{E}}}\\
\mathbf{Sets}^{\mathbf{C}^{op}} \ar[ru]_{L^Z} & } $$
In other words, $L^Z$ is an extension of $Z$ that preserves colimits. Moreover $L^Z$ is, up to natural isomorphism, the only extension of $Z$ preserving all colimits.
\end{teo}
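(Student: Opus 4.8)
The plan is to verify the four assertions separately: that $L^Z$ is left adjoint to $\mathrm{Sing}^Z$, that it preserves colimits, that it extends $Z$ along $\Yoneda$, and that it is the unique such extension. The whole argument rests on translating the universal property of the defining colimit into the language of natural transformations.

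First I would prove the adjunction directly, by exhibiting a bijection
$$\mathrm{Hom}_{\mathcal{E}}(L^Z(X), Y) \cong \mathrm{Hom}_{\mathbf{Sets}^{\mathbf{C}^{op}}}(X, \mathrm{Sing}^Z(Y)),$$
natural in $X$ and $Y$. Since $L^Z(X)$ is a colimit and $\mathrm{Hom}_{\mathcal{E}}(-, Y)$ carries colimits to limits, the left-hand side is the set of cocones under the diagram $Z \circ \pi_X$ with vertex $Y$, that is, families $\{f_{(c,x)} \colon Z(c) \to Y\}$ with $f_{(c',x')} \circ Z(\varphi) = f_{(c,x)}$ whenever $\varphi \colon c \to c'$ satisfies $X(\varphi)(x') = x$. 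On the other hand, a morphism of presheaves $\alpha \colon X \to \mathrm{Sing}^Z(Y)$ is a family of maps $\alpha_c \colon X(c) \to \mathrm{Hom}_{\mathcal{E}}(Z(c), Y)$, and the naturality squares of $\alpha$ say precisely that, upon setting $f_{(c,x)} := \alpha_c(x)$, the cocone condition holds. This correspondence is a bijection, and checking naturality in both variables is routine.

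The preservation of colimits is then immediate, since every left adjoint preserves colimits. For the extension property I would compute $L^Z(\Yoneda(c))$ by examining the category of elements $\int_{\mathbf{C}} \Yoneda(c)$: its objects are pairs $(d, \varphi)$ with $\varphi \colon d \to c$, and $(c, 1_c)$ is a terminal object, because a morphism $(d, \varphi) \to (c, 1_c)$ is a map $\psi \colon d \to c$ with $1_c \circ \psi = \varphi$, forcing $\psi = \varphi$. A colimit indexed by a category possessing a terminal object equals the value of the diagram at that object, so $L^Z(\Yoneda(c)) \cong Z(\pi_{\Yoneda(c)}(c, 1_c)) = Z(c)$, naturally in $c$; this is exactly the commutativity of the stated triangle.

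For uniqueness, the key ingredient is that every presheaf is canonically a colimit of representables (the density of the Yoneda embedding). I would derive this as a special case of the adjunction: taking $Z = \Yoneda$, the Yoneda lemma gives $\mathrm{Sing}^{\Yoneda} \cong \mathrm{id}$, whose left adjoint must be $L^{\Yoneda} \cong \mathrm{id}$, and unwinding the definition of $L^{\Yoneda}$ yields the natural isomorphism $X \cong \mathrm{colim}(\int_{\mathbf{C}} X \xrightarrow{\pi_X} \mathbf{C} \xrightarrow{\Yoneda} \mathbf{Sets}^{\mathbf{C}^{op}})$. Given any colimit-preserving $L'$ with $L' \circ \Yoneda \cong Z$, I can then slide $L'$ through this colimit presentation:
$$L'(X) \cong L'\Big(\mathrm{colim}_{(c,x)} \Yoneda(c)\Big) \cong \mathrm{colim}_{(c,x)} L'(\Yoneda(c)) \cong \mathrm{colim}_{(c,x)} Z(c) = L^Z(X),$$
and verify these isomorphisms are natural, giving $L' \cong L^Z$. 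I expect the main obstacle to be precisely this density step together with the accompanying naturality bookkeeping: while the bijection of Hom-sets is a direct unwinding of definitions, it must be shown natural in both arguments, and density has to be deployed coherently enough that the final comparison $L' \cong L^Z$ is a genuine natural isomorphism rather than merely an objectwise one.
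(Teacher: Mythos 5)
Your proposal is correct, and it is the standard argument for this classical result (the nerve--realization adjunction, sometimes phrased via left Kan extension along the Yoneda embedding); the paper itself gives no proof and simply defers to Mac Lane--Moerdijk \cite{MM92}, where essentially the same reasoning appears. All four steps are sound: the cocone/naturality dictionary establishing the adjunction, colimit preservation from adjointness, the identification $L^Z(\Yoneda(c))\cong Z(c)$ via the terminal object $(c,1_c)$ of $\int_{\mathbf{C}}\Yoneda(c)$, and uniqueness via the density of representables, which you correctly extract from the case $Z=\Yoneda$.
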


\medskip \begin{obs}
Observe that, actually, we have a functor:
$$\mathbf{Sets}^{\mathbf{C}^{op}}\times \mathbf{\mathcal{E}^C}\to \mathbf{\mathcal{E}},\hspace{8pt} (X,Z)\mapsto L^Z(X).$$
\end{obs}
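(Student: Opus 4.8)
The plan is to exhibit the assignment $(X,Z)\mapsto L^Z(X)$ on morphisms and to verify the functor axioms, building everything on top of the description $L^Z(X)=\operatorname{colim}(Z\circ\pi_X)$ from Theorem \ref{ext-pr}. A morphism in the product category is a pair $(f,\eta)\colon (X,Z)\to (X',Z')$, with $f\colon X\to X'$ in $\mathbf{Sets}^{\mathbf{C}^{op}}$ and $\eta\colon Z\to Z'$ in $\mathbf{\mathcal{E}}^{\mathbf{C}}$. The starting observation is that the category of elements is itself functorial in the presheaf variable alone: $f$ induces $\int f\colon \int_{\mathbf{C}}X\to\int_{\mathbf{C}}X'$, $(c,x)\mapsto (c,f_c(x))$, and this functor strictly commutes with the projections, $\pi_{X'}\circ\int f=\pi_X$. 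The only thing to check here is that $\int f$ respects morphisms, which is exactly the naturality square of $f$. Consequently $Z'\circ\pi_{X'}\circ\int f=Z'\circ\pi_X$, so the two diagrams $Z\circ\pi_X$ and $Z'\circ\pi_{X'}$ can be compared over the single index category $\int_{\mathbf{C}}X$.

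Next I would define the morphism $L^{(f,\eta)}\colon L^Z(X)\to L^{Z'}(X')$. Writing $\iota_{(c,x)}\colon Z(c)\to L^Z(X)$ and $\iota'_{(c,x')}\colon Z'(c)\to L^{Z'}(X')$ for the colimit injections of source and target, I set, for each object $(c,x)$ of $\int_{\mathbf{C}}X$,
$$\lambda_{(c,x)}\colon Z(c)\xrightarrow{\ \eta_c\ }Z'(c)\xrightarrow{\ \iota'_{(c,\,f_c(x))}\ }L^{Z'}(X').$$
The key verification is that $\{\lambda_{(c,x)}\}$ is a cocone under $Z\circ\pi_X$: for a morphism $(c,x)\xrightarrow{\varphi}(c',x')$ of $\int_{\mathbf{C}}X$ one must see that $\lambda_{(c',x')}\circ Z(\varphi)=\lambda_{(c,x)}$. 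This follows by combining the naturality square $\eta_{c'}\circ Z(\varphi)=Z'(\varphi)\circ\eta_c$ with the cocone identity $\iota'_{(c',f_{c'}(x'))}\circ Z'(\varphi)=\iota'_{(c,f_c(x))}$, the latter valid because $\varphi$ also defines a morphism $(c,f_c(x))\to(c',f_{c'}(x'))$ in $\int_{\mathbf{C}}X'$. The universal property of $L^Z(X)=\operatorname{colim}(Z\circ\pi_X)$ then yields a unique map $L^{(f,\eta)}$ with $L^{(f,\eta)}\circ\iota_{(c,x)}=\lambda_{(c,x)}$.

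Finally, functoriality is formal: preservation of identities and of composition each follow from the uniqueness clause of the universal property, since the relevant cocones visibly compose. I expect no serious obstacle here; the only real content is the cocone compatibility above, which is pure bookkeeping with the two naturalities. If one prefers a more structural argument, the same statement can be packaged by checking functoriality in each variable separately — functoriality in $X$ for fixed $Z$ is already Theorem \ref{ext-pr}, and for fixed $X$ the assignment $\eta\mapsto\operatorname{colim}(\eta\,\pi_X)$ is the colimit of a natural transformation over the fixed index category $\int_{\mathbf{C}}X$ — together with the interchange (bifunctoriality) square relating $(f,\mathrm{id})$ and $(\mathrm{id},\eta)$. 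Equivalently, one may recognise $L^Z(X)$ as the coend $\int^{c}X(c)\cdot Z(c)$, a copower-weighted colimit, whose bifunctoriality in the pair $(X,Z)$ is then automatic.
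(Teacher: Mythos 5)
Your proof is correct. The paper states this remark without any proof at all (it is offered as an immediate observation after Theorem \ref{ext-pr}), so there is no argument of the authors' to compare against; your construction of $L^{(f,\eta)}$ via the induced functor $\int f$ on categories of elements, the cocone $\iota'_{(c,f_c(x))}\circ\eta_c$, and the universal property is exactly the standard verification one would supply, and the coend/weighted-colimit packaging you mention at the end is an equally valid shortcut.
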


We note that, as a consequence of Theorem \ref{ext-pr} above with $\mathbf{\mathcal{E}}=\mathbf{Sets}^{\mathbf{C}^{op}}$, certain induced functors arise, which are called action functors:

\medskip
\begin{defi}\label{actions} The \emph{action functors} are the following ones:
\begin{enumerate}
\item $(-)\widetilde{\vartriangleright} (-) \colon  \mathbf{Sets}^{\mathbf{C}^{op}}  \times (\mathbf{Sets}^{\mathbf{C}^{op}})^{\mathbf{C}} \to \mathbf{Sets}^{\mathbf{C}^{op}}$ $$X \widetilde{\vartriangleright} Y:=L^Y(X).$$

\item $(-)\widetilde{\vartriangleright} (-) \colon  (\mathbf{Sets}^{\mathbf{C}^{op}})^{\mathbf{C}}\times (\mathbf{Sets}^{\mathbf{C}^{op}})^{\mathbf{C}} \to (\mathbf{Sets}^{\mathbf{C}^{op}})^{\mathbf{C}}$ $$(Y \widetilde{\vartriangleright} Z)(c):=Y(c)\widetilde{\vartriangleright} Z=L^Z(Y(c)).$$
\end{enumerate}

Given $X\in \mathbf{Sets}^{\mathbf{C}^{op}} $ and $Y, Z \in (\mathbf{Sets}^{\mathbf{C}^{op}})^{\mathbf{C}} $ we say that $X \widetilde{\vartriangleright} Y \in \mathbf{Sets}^{\mathbf{C}^{op}}$ is the (right) action
of $Y$ on $X$, and similarly, $Y \widetilde{\vartriangleright} Z \in (\mathbf{Sets}^{\mathbf{C}^{op}})^{\mathbf{C}} $ is the (right) action of $Z$ on $Y$.
 \end{defi}

\medskip
 \begin{obs} If $X\in \mathbf{Sets}^{\mathbf{C}^{op}} $ and $Y, Z \in  (\mathbf{Sets}^{\mathbf{C}^{op}})^{\mathbf{C}}$, then it follows that
$$(X \widetilde{\vartriangleright} Y)  \widetilde{\vartriangleright}  Z\cong X \widetilde{\vartriangleright} (Y  \widetilde{\vartriangleright}  Z).$$ This formula inspires the name given: ``action functors''.
 \end{obs}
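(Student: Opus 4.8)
The plan is to deduce the formula from the uniqueness clause of Theorem~\ref{ext-pr}, treating both sides as functors of the single variable $X$. First I would unwind Definition~\ref{actions}: the right action $(-)\widetilde{\vartriangleright} Y$ is exactly the functor $L^Y$, while $Y\widetilde{\vartriangleright} Z$ is the functor $\mathbf{C}\to\mathbf{Sets}^{\mathbf{C}^{op}}$ sending an object $c$ to $L^Z(Y(c))$ and a morphism of $\mathbf{C}$ to the image under $L^Z$ of the corresponding structure map of $Y$; in other words $Y\widetilde{\vartriangleright} Z = L^Z\circ Y$. Writing $W:=Y\widetilde{\vartriangleright} Z = L^Z\circ Y$, the left-hand side of the claim becomes $X\mapsto L^Z(L^Y(X))$, i.e. the composite $L^Z\circ L^Y$, and the right-hand side becomes $X\mapsto L^W(X)$.

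Next I would check the two hypotheses that make the uniqueness clause applicable. Both $L^Y$ and $L^Z$ preserve colimits by Theorem~\ref{ext-pr}, hence so does the composite $L^Z\circ L^Y$; and $L^W$ preserves colimits by the same theorem. It then remains to compare the restrictions of the two functors along the Yoneda embedding $\Yoneda\colon\mathbf{C}\to\mathbf{Sets}^{\mathbf{C}^{op}}$. The commutative triangle of Theorem~\ref{ext-pr} gives natural isomorphisms $L^Y\circ\Yoneda\cong Y$ and $L^Z\circ\Yoneda\cong Z$. Consequently
$$(L^Z\circ L^Y)\circ\Yoneda \;=\; L^Z\circ(L^Y\circ\Yoneda)\;\cong\; L^Z\circ Y \;=\; W,$$
while $L^W\circ\Yoneda\cong W$ again by the triangle. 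Thus $L^Z\circ L^Y$ and $L^W$ are two colimit-preserving functors $\mathbf{Sets}^{\mathbf{C}^{op}}\to\mathbf{Sets}^{\mathbf{C}^{op}}$ whose restrictions along $\Yoneda$ are both naturally isomorphic to $W$.

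Finally I would invoke the uniqueness statement in Theorem~\ref{ext-pr}: a colimit-preserving extension of a fixed functor $\mathbf{C}\to\mathbf{\mathcal{E}}$ along $\Yoneda$ is unique up to natural isomorphism. Applying this with $\mathbf{\mathcal{E}}=\mathbf{Sets}^{\mathbf{C}^{op}}$ to the functor $W$ yields a natural isomorphism $L^Z\circ L^Y\cong L^W$, and evaluating at $X$ produces the desired $(X\widetilde{\vartriangleright} Y)\widetilde{\vartriangleright} Z\cong X\widetilde{\vartriangleright}(Y\widetilde{\vartriangleright} Z)$. Concretely, the underlying reason this works is that every presheaf is the canonical colimit of representables indexed by its category of elements, so a colimit-preserving functor out of $\mathbf{Sets}^{\mathbf{C}^{op}}$ is determined by its values on the image of $\Yoneda$.

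I expect the only genuinely delicate point to be the second step, namely verifying that $Y\widetilde{\vartriangleright} Z$ really is the composite $L^Z\circ Y$ as a functor of $c$ (not merely objectwise), which requires that $L^Z$ be applied coherently to the structure morphisms of $Y$, and that the isomorphism $(L^Z\circ L^Y)\circ\Yoneda\cong W$ be natural in $c$ rather than just pointwise. Both follow from the functoriality of $L^Z$ and the naturality of the triangle isomorphisms of Theorem~\ref{ext-pr}, but they are the steps where one must actually chase the Yoneda identifications. An alternative, more hands-on route would be to expand both sides via the colimit formula $L^Z(X)=\operatorname{colim}(\int_{\mathbf{C}}X\to\mathbf{C}\xrightarrow{Z}\mathbf{\mathcal{E}})$ and interchange the two nested colimits by a Fubini-type argument, but the uniqueness route above avoids this bookkeeping entirely.
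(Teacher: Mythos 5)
Your argument is correct and is precisely the one the paper leaves implicit: the remark is stated without proof immediately after Theorem~\ref{ext-pr} and Definition~\ref{actions}, and the intended justification is exactly the uniqueness clause for colimit-preserving extensions along $\Yoneda$, applied to the two extensions $L^Z\circ L^Y$ and $L^{Y\widetilde{\vartriangleright}Z}$ of $Y\widetilde{\vartriangleright}Z=L^Z\circ Y$. Your identification of the one delicate point (that $Y\widetilde{\vartriangleright}Z$ is the composite $L^Z\circ Y$ functorially in $c$, so that the comparison isomorphisms are natural) is also right, and it is handled correctly by the functoriality of $L^Z$.
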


\subsection{Augmented semi-simplicial sets}

Now consider the small category $\mathbf{\Gamma}$, whose objects are all non-empty totally ordered sets $[p]=\{0<\cdots<p\}$ for $p\geq 0$, and whose morphisms are strictly increasing maps $[p]\rightarrow [q].$ We can add the empty set $\emptyset =[-1]$ to this category together with all the strictly increasing maps. The resulting augmented category will be denoted by $\mathbf{\Gamma _+}$.

We can consider $\mathbf{Sets}^{\mathbf{\Gamma}_+^{op}}$ and $\mathbf{Sets}^{\mathbf{\Gamma}_+}$, which are the presheaf categories on $\mathbf{\Gamma} _+$ and $\mathbf{\Gamma} _+^{op}$, respectively. Taking $\mathbf{\mathcal{E}}=\mathbf{Sets}^{\mathbf{\Gamma}_+^{op}}$, $\mathbf{C}=\mathbf{\Gamma} ^+$, and using the notation $X\widetilde{\vartriangleright}Y$ for $L^Y(X)$ given in Definition \ref{actions} we obtain the following action functors:
 $$\mathbf{Sets}^{\mathbf{\Gamma}_+^{op}}\times (\mathbf{Sets}^{\mathbf{\Gamma}_+^{op}})^{\mathbf{\Gamma}_+}\to \mathbf{Sets}^{\mathbf{\Gamma}_+^{op}},\hspace{8pt}(X,Y)\mapsto X\widetilde{\vartriangleright}Y,$$
 $$(\mathbf{Sets}^{\mathbf{\Gamma}_+^{op}})^{\mathbf{\Gamma}_+}\times (\mathbf{Sets}^{\mathbf{\Gamma}_+^{op}})^{\mathbf{\Gamma}_+}\to (\mathbf{Sets}^{\mathbf{\Gamma}_+^{op}})^{\mathbf{\Gamma}_+},\hspace{8pt}(Y,Z)\mapsto Y\widetilde{\vartriangleright}Z.
$$

We obviously have the corresponding Yoneda embeddings:
$$\Yoneda:\mathbf{\Gamma}_+\to \mathbf{Sets}^{\mathbf{\Gamma}_+^{op}}, \quad \Yoneda^{op}:\mathbf{\Gamma}_+^{op} \to \mathbf{Sets}^{\mathbf{\Gamma}_+}.$$
For any object $[n]$ in $\mathbf{\Gamma}_+$ we denote by $\mathbf{\Gamma}_+[n]$ the representable presheaf $$\mathbf{\Gamma}_+[n]:=\Yoneda([n])=\mbox{Hom}_{\mathbf{\Gamma}_+}(-,[n])\in \mathbf{Sets}^{\mathbf{\Gamma}_+^{op}}.$$ Analogously, we consider the notation
$$\mathbf{\Gamma}_+^{op}[n]:=\Yoneda^{op}([n])=\mbox{Hom}_{\mathbf{\Gamma}_+}([n],-)\in \mathbf{Sets}^{\mathbf{\Gamma}_+}.$$

\begin{defi} Any presheaf on $\mathbf{\Gamma}_+$ $$X:\mathbf{\Gamma}_+^{op}\to \mathbf{Sets}$$ \noindent will be called \emph{augmented semi-simplicial set} or $\mathbf{\Gamma}_+^{op}$-\emph{set}. Analogously, any presheaf on $\mathbf{\Gamma}_+^{op}$, $Z:\mathbf{\Gamma}_+\to
\mathbf{Sets}$, will be called \emph{augmented co-semi-simplicial set} or $\mathbf{\Gamma}_+$-\emph{set}.
\end{defi}

Giving an augmented semi-simplicial set
$X\in \mathbf{Sets}^{\mathbf{\Gamma}_+^{op}}$
 is equivalent to giving a collection of sets $\{X_n\}_{n\geq -1}$ together with
a collection of set maps $d_i^n:X_n\rightarrow X_{n-1}$ ($n\geq 0$ and $0\leq i\leq n$) satisfying
$$d_i^n\circ d_j^{n+1}=d_{j-1}^n\circ d_i^{n+1},\hspace{4pt}\mbox{if}\hspace{2pt}i<j.$$

Moreover, giving an arrow $f:X\rightarrow \bar X$ in $\mathbf{Sets}^{\mathbf{\Gamma}_+^{op}}$ (i.e., a natural transformation) is equivalent to giving a collection of set maps $\{f_n:X_n\rightarrow \bar X_n\}_{n\geq -1}$ such that
$$f_{n-1}\circ d_i^n={\bar d}_i^{n-1}\circ f_n,$$ for $n\geq 0$ and $0\leq i\leq n.$

There is a similar description for the category of augmented co-semi-simplicial sets by just reversing the arrows in the representation above.

We can also consider the subcategory of finite sets $\mathbf{Sets}_{\rm{fin}}$ and the corresponding category of presheaves
$(\mathbf{Sets}_{\rm{fin}})^{\mathbf{\Gamma}_+^{op}}$.

An augmented semi-simplicial set $X\in\mathbf{Sets}^{\mathbf{\Gamma}_+^{op}}$ is said to have \emph{finite dimension} if there is $k\in \mathbb{N}_+$ such that $X_i=\emptyset$ for every $i > k$. Given a non-empty finite dimensional semi-simplicial set $X$, we denote $\text{dim}(X):=\min\{k \hspace{3pt}|\hspace{3pt}X_i=\emptyset \hspace{3pt}\mbox{for all}\hspace{3pt} i > k, i  \in \mathbb{N}_+\}.$
For the empty semi-simplicial set, we set $\text{dim}(\emptyset):=-\infty $

The subcategory of finite dimensional semi-simplicial sets is denoted by $(\mathbf{Sets}^{\mathbf{\Gamma}_+^{op}})_{\text{fd}}$.
An augmented semi-simplicial set $X\in\mathbf{Sets}^{\mathbf{\Gamma}_+^{op}}$ is said to be \emph{finite} if it has finite dimension and for every $i \in \mathbb{N}_+$, $X_i$ is a finite set. The subcategory of finite semi-simplicial sets is denoted by $(\mathbf{Sets}^{\mathbf{\Gamma}_+^{op}})_{\text{fin}}$.

The join of any pair of $\mathbf{\Gamma}_+^{op}$-sets, $X,Y$, is given by the formula:
$$(X\boxplus Y)_m:=\bigsqcup_{p+q=m-1} X_p \times Y_q$$ where $p$ and $q$ are integers greater than or equal to $-1.$
In particular, we have $(X\boxplus Y)_{-1}=X_{-1} \times Y_{-1}$, $(X\boxplus Y)_{0}=(X_{-1} \times Y_{0})\sqcup(X_{0} \times Y_{-1})$,
$(X\boxplus Y)_{1}=(X_{-1} \times Y_{1})\sqcup (X_{0} \times Y_{0})\sqcup (X_{1} \times Y_{-1})$ and so on.
Additionally, the operators $d_i^{X\boxplus Y}$  of $X\boxplus Y$ are defined naturally from the operators $d_k^{X}$ and $d_l^{Y}$ of $X$ and $Y$ respectively. The definition of $\boxplus $ on morphisms is straightforward. This gives the \emph{join functor}
$$\boxplus :\mathbf{Sets}^{\mathbf{\Gamma}_+^{op}} \times \mathbf{Sets}^{\mathbf{\Gamma}_+^{op}} \to \mathbf{Sets}^{\mathbf{\Gamma}_+^{op}} $$

\medskip
For any pair $X,Y \in \mathbf{Sets}^{\mathbf{\Gamma}_+^{op}}$ we have that the functors
$$X\boxplus (-),\hspace{4pt}(-)\boxplus Y:\mathbf{Sets}^{\mathbf{\Gamma}_+^{op}}\to \mathbf{Sets}^{\mathbf{\Gamma}_+^{op}}$$ \noindent preserve colimits (see \cite{GHR2022}).
Furthermore, if $X, Y, Z \in \mathbf{Sets}^{\mathbf{\Gamma}_+^{op}}$, then there exist canonical natural isomorphisms
$$X\boxplus(Y\boxplus Z) \cong (X \boxplus Y)\boxplus Z,$$
$${\Gamma_+}[-1] \boxplus X\cong X \cong X \boxplus {\Gamma_+}[-1], $$
$$X  \boxplus Y \cong Y  \boxplus X.$$
Moreover, it is shown in \cite{GHR2022} that for any $n,m\in \mathbb{N}_+$, the following isomorphism holds true
$${\Gamma_+}[n]\boxplus {\Gamma_+}[m]\cong {\Gamma_+}[n+m+1].$$

\medskip
The category $\mathbf{Sets}^{\mathbf{\Gamma}_+^{op}}$ together with the join functor $\boxplus$ and the unit object ${\Gamma_+}[-1]$ forms a symmetric monoidal category \cite{GHR2022}. Furthermore, considering the coproduct
(the ordinal sum) in the category $\mathbf{\Gamma}_+$
$$[p]\sqcup [q]:=[p+q+1]$$ we obtain a symmetric monoidal category structure on $\mathbf{\Gamma}_+$ having $[-1]$ as a unit object.
It is also shown in \cite{GHR2022} that the Yoneda embedding, $\Yoneda : (\mathbf{\Gamma_+}, \sqcup, [-1]) \to (\mathbf{Sets}^{\mathbf{\Gamma}_+^{op}},\boxplus,\Gamma_+[-1]),$ is monoidal. Moreover,  $((\mathbf{Sets}^{\mathbf{\Gamma}_+^{op}})_{\text{fin}},\boxplus,\Gamma_+[-1])$ is a monoidal subcategory of $((\mathbf{Sets}_{\text{fin}})^{{\mathbf{\Gamma}_+^{op}}},\boxplus,\Gamma_+[-1])$ and the latter is a monoidal subcategory of $(\mathbf{Sets}^{\mathbf{\Gamma}_+^{op}},\boxplus,\Gamma_+[-1])$.

\bigskip
We have the \emph{left-cone functor} $\mbox{Con}_l:={\Gamma_+}[0]\boxplus (-)$ and the \emph{right-cone functor}
$\mbox{Con}_r:=(-)\boxplus {\Gamma_+}[0]$:
$$\mbox{Con}_l, \mbox{Con}_r:\mathbf{Sets}^{\mathbf{\Gamma}_+^{op}} \to \mathbf{Sets}^{\mathbf{\Gamma}_+^{op}}$$
$$\mbox{Con}_l(X)={\Gamma_+}[0]\boxplus X,$$
$$\mbox{Con}_r(X)= X\boxplus {\Gamma_+}[0].$$

These functors satisfy
$\mbox{Con}_l  ({\Gamma_+}[k])={\Gamma_+}[k+1]= \mbox{Con}_r({\Gamma_+}[k]),$
for all $k\geq -1.$

\bigskip

Recall that, using the notation $X\widetilde{\vartriangleright}Y$ defined in Definition \ref{actions}, we have the action functors:
$$\mathbf{Sets}^{\mathbf{\Gamma}_+^{op}}\times (\mathbf{Sets}^{\mathbf{\Gamma}_+^{op}})^{\mathbf{\Gamma}_+}\to \mathbf{Sets}^{\mathbf{\Gamma}_+^{op}},\hspace{8pt}(X,Y)\mapsto X\widetilde{\vartriangleright}Y$$
$$(\mathbf{Sets}^{\mathbf{\Gamma}_+^{op}})^{\mathbf{\Gamma}_+}\times (\mathbf{Sets}^{\mathbf{\Gamma}_+^{op}})^{\mathbf{\Gamma}_+}\to (\mathbf{Sets}^{\mathbf{\Gamma}_+^{op}})^{\mathbf{\Gamma}_+}, \hspace{8pt}(Y,Z)\mapsto Y\widetilde{\vartriangleright}Z$$

Here, we take $\mathbf{\mathcal{E}}=\mathbf{Sets}^{\mathbf{\Gamma}_+^{op}}$ and $\mathbf{C}=\mathbf{\Gamma}^+$.
\medskip

The following result is also proved in \cite{GHR2022}. Recall that we are using the particular notation $(-) \widetilde{\vartriangleright} Z$ for the construction $L^Z(-)$:
\medskip
\begin{prop}\label{superj} Let $Z:(\mathbf{\Gamma}_+, \sqcup,[-1]) \to (\mathbf{Sets}^{\mathbf{\Gamma}_+^{op}},\boxplus,\Gamma_+[-1])$ be a monoidal functor. Then the colimit-preserving functor $(-)\widetilde{\vartriangleright} Z:\mathbf{Sets}^{\mathbf{\Gamma}_+^{op}}\to \mathbf{Sets}^{\mathbf{\Gamma}_+^{op}}$, which makes the following diagram commute
$$
\xymatrix{ \ar@{^(->}[d]_{\Yoneda} {\mathbf{\Gamma_+}} \ar[r]^Z & \mathbf{Sets}^{\mathbf{\Gamma}_+^{op}}\\
\mathbf{Sets}^{\mathbf{\Gamma}_+^{op}}\ar[ru]_{(-)\widetilde{\vartriangleright} Z} & } $$
\noindent is monoidal. In particular, for all $X,Y \in \mathbf{Sets}^{\mathbf{\Gamma}_+^{op}}$,
we have
$$(X\boxplus Y)\widetilde{\vartriangleright}Z\cong (X\widetilde{\vartriangleright}Z)\boxplus (Y\widetilde{\vartriangleright}Z).$$
Moreover, up to isomorphism, it is the unique colimit-preserving functor making this diagram commute.
\end{prop}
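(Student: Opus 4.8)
The plan is to build the monoidal structure isomorphism for $L^Z:=(-)\widetilde{\vartriangleright} Z$ by reducing everything to representable presheaves, where the monoidality of both $Z$ and the Yoneda embedding can be invoked directly, and then to propagate the isomorphism to arbitrary presheaves using colimit-preservation. Throughout I assume, as the hypothesis suggests, that $Z$ is \emph{strong} monoidal.

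First I would record the two ingredients coming from the hypotheses and from Theorem \ref{ext-pr}. On the one hand, $L^Z$ preserves colimits and satisfies $\Gamma_+[n]\widetilde{\vartriangleright} Z\cong Z([n])$ naturally in $[n]$, since the extension triangle commutes and $\Yoneda([n])=\Gamma_+[n]$. On the other hand, each functor $X\boxplus(-)$ and $(-)\boxplus Y$ preserves colimits, as recalled from \cite{GHR2022}. Consequently the two functors of two variables
$$F(X,Y):=(X\boxplus Y)\widetilde{\vartriangleright} Z, \qquad G(X,Y):=(X\widetilde{\vartriangleright} Z)\boxplus(Y\widetilde{\vartriangleright} Z)$$
both preserve colimits separately in $X$ and in $Y$: $F$ because it is a composite of $\boxplus$ (cocontinuous in each slot) with the cocontinuous $L^Z$, and $G$ because $L^Z$ is cocontinuous and $\boxplus$ is cocontinuous in each slot.

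Next I would compare $F$ and $G$ on representables. Using the monoidality of $\Yoneda$, namely $\Gamma_+[n]\boxplus\Gamma_+[m]\cong\Gamma_+[n+m+1]$, together with $\Gamma_+[k]\widetilde{\vartriangleright} Z\cong Z([k])$, the left-hand functor gives
$$F(\Gamma_+[n],\Gamma_+[m])\cong \Gamma_+[n+m+1]\widetilde{\vartriangleright} Z\cong Z([n+m+1]),$$
while the right-hand functor gives $G(\Gamma_+[n],\Gamma_+[m])\cong Z([n])\boxplus Z([m])$. The monoidality of $Z$ now supplies a natural isomorphism $Z([n])\boxplus Z([m])\cong Z([n]\sqcup[m])=Z([n+m+1])$, and composing these identifications yields a natural isomorphism $F\cong G$ on representables, naturality in $[n],[m]$ being inherited from that of each constituent isomorphism. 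Since every presheaf is a canonical colimit of representables (indeed this is built into the colimit formula for $L^Z$ in Theorem \ref{ext-pr}) and $F,G$ are cocontinuous in each variable, writing $X=\mathrm{colim}_i\Gamma_+[n_i]$ and $Y=\mathrm{colim}_j\Gamma_+[m_j]$ and passing to the double colimit extends this to a natural isomorphism
$$(X\boxplus Y)\widetilde{\vartriangleright} Z\cong(X\widetilde{\vartriangleright} Z)\boxplus(Y\widetilde{\vartriangleright} Z)$$
for all $X,Y$. The unit constraint is handled the same way: $\Gamma_+[-1]\widetilde{\vartriangleright} Z\cong Z([-1])\cong\Gamma_+[-1]$, the last isomorphism being the unit part of the monoidal structure of $Z$.

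Finally I would check the coherence axioms and dispose of uniqueness. The associativity, unit, and symmetry diagrams for the candidate structure maps commute on representables, because there they are assembled entirely from the corresponding (already coherent) diagrams for the monoidal functors $Z$ and $\Yoneda$ and from the coherence of $\boxplus$ established in \cite{GHR2022}; since these are equalities of natural transformations between functors that are cocontinuous in each variable, commutativity on the generating representables forces commutativity everywhere. This exhibits $L^Z$ as a monoidal functor. The last assertion, that $L^Z$ is the unique colimit-preserving extension making the triangle commute, is precisely the uniqueness clause of Theorem \ref{ext-pr}. I expect the coherence bookkeeping to be the only delicate point: the isomorphisms themselves are immediate, but verifying that the pentagon and triangle commute requires tracking several natural transformations at once, which is exactly why reducing to representables—where they collapse to the known coherence of $Z$ and $\Yoneda$—is the key simplification.
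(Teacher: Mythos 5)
Your proposal is correct; note that the paper itself gives no proof of Proposition \ref{superj}, merely citing \cite{GHR2022}, so the only comparison available is with the standard argument, which is exactly what you give: identify $F(X,Y)=(X\boxplus Y)\widetilde{\vartriangleright}Z$ and $G(X,Y)=(X\widetilde{\vartriangleright}Z)\boxplus(Y\widetilde{\vartriangleright}Z)$ on representables via the monoidality of $\Yoneda$ and of $Z$ together with $\Gamma_+[k]\widetilde{\vartriangleright}Z\cong Z([k])$, and then propagate along canonical colimits of representables using cocontinuity in each variable, with coherence and uniqueness likewise reduced to the dense subcategory and to Theorem \ref{ext-pr}. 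All the steps you invoke (separate cocontinuity of $\boxplus$, determination of natural transformations between cocontinuous functors by their restriction along Yoneda) are available in the paper or in \cite{GHR2022}, so the argument is complete.
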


\medskip
\begin{obs}
We also obtain:

\begin{itemize}
 \item[(i)] If $Z:(\mathbf{\Gamma_+}, \sqcup,[-1]) \to ((\mathbf{Sets}_{\rm fin})^{\mathbf{\Gamma}_+^{op}},\boxplus,\Gamma_+[-1])$ is a monoidal functor, then $(-)\widetilde{\vartriangleright} Z: (\mathbf{Sets}_{\rm fin})^{\mathbf{\Gamma}_+^{op}}\rightarrow (\mathbf{Sets}_{\rm fin})^{\mathbf{\Gamma}_+^{op}}$ is a monoidal functor.
  \item[(ii)] If  $Z:(\mathbf{\Gamma_+}, \sqcup,[-1]) \to ((\mathbf{Sets}^{\mathbf{\Gamma}_+^{op}})_{\rm fin},\boxplus,\Gamma_+[-1])$ is a monoidal functor,
then $(-)\widetilde{\vartriangleright} Z: (\mathbf{Sets}^{\mathbf{\Gamma}_+^{op}})_{\rm fin}\rightarrow (\mathbf{Sets}^{\mathbf{\Gamma}_+^{op}})_{\rm fin}$ is a monoidal functor.

\end{itemize}
\end{obs}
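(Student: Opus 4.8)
The plan is to deduce both items from Proposition \ref{superj}, reducing each of them to the single assertion that the colimit-preserving functor $(-)\widetilde{\vartriangleright} Z=L^Z(-)$ sends the relevant subcategory into itself. Once that is shown, monoidality comes for free: the excerpt already records that $((\mathbf{Sets}^{\mathbf{\Gamma}_+^{op}})_{\rm fin},\boxplus,\Gamma_+[-1])$ and $((\mathbf{Sets}_{\rm fin})^{\mathbf{\Gamma}_+^{op}},\boxplus,\Gamma_+[-1])$ are \emph{full} monoidal subcategories of $(\mathbf{Sets}^{\mathbf{\Gamma}_+^{op}},\boxplus,\Gamma_+[-1])$, so the coherence isomorphisms $(X\boxplus Y)\widetilde{\vartriangleright} Z\cong (X\widetilde{\vartriangleright} Z)\boxplus (Y\widetilde{\vartriangleright} Z)$ and $\Gamma_+[-1]\widetilde{\vartriangleright} Z\cong\Gamma_+[-1]=L^Z(\Gamma_+[-1])$ supplied by Proposition \ref{superj} restrict verbatim, as soon as all objects involved remain in the subcategory. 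Thus everything hinges on a finiteness statement for $L^Z(X)$.

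For item (ii) this is immediate. If $X$ is finite then its category of elements $\int_{\mathbf{\Gamma}_+}X$ is a finite category, so $X$ is a finite colimit of representables $\mathbf{\Gamma}_+[n]$. Since $L^Z$ preserves colimits and $L^Z(\mathbf{\Gamma}_+[n])=Z([n])$ is finite by hypothesis, and since a finite colimit of finite augmented semi-simplicial sets is again finite (levelwise it is a finite colimit of finite sets, and no simplices appear above the maximal dimension occurring in the diagram), we conclude that $X\widetilde{\vartriangleright} Z$ is finite.

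For item (i) the category of elements is no longer finite, so I would argue through the skeletal filtration $X=\operatorname{colim}_k\operatorname{sk}_k(X)$, a filtered colimit along the levelwise monomorphisms $\operatorname{sk}_k(X)\hookrightarrow\operatorname{sk}_{k+1}(X)$. Applying the colimit-preserving functor $L^Z$ gives $L^Z(X)=\operatorname{colim}_k L^Z(\operatorname{sk}_k X)$, and each $\operatorname{sk}_k(X)$ is a finite-dimensional presheaf with finite levels, hence, exactly as in (ii), $L^Z(\operatorname{sk}_k X)$ is a finite colimit of the finite-level presheaves $Z([j])$ with $j\le k$ and so has finite levels. The point is then to prove \emph{levelwise stabilization}: for each $m$ the map $L^Z(\operatorname{sk}_k X)_m\to L^Z(\operatorname{sk}_{k+1}X)_m$ is bijective once $k\ge m$, so that $L^Z(X)_m=L^Z(\operatorname{sk}_m X)_m$ is finite. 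Passing from $\operatorname{sk}_k X$ to $\operatorname{sk}_{k+1}X$ glues in, via a pushout, a copy of $\mathbf{\Gamma}_+[k+1]$ along $\partial\mathbf{\Gamma}_+[k+1]$ for each $(k+1)$-simplex of $X$; applying $L^Z$ turns this into a pushout attaching a copy of $Z([k+1])$ along the image of $L^Z(\partial\mathbf{\Gamma}_+[k+1])$, so the new $m$-simplices are exactly the $m$-simplices of $Z([k+1])$ lying outside the union of the faces $Z(d_i)\big(Z([k])\big)$.

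The key lemma, which I expect to be the main obstacle, is that $Z([n])$ has no $m$-simplices outside the union of its $n+1$ faces whenever $n>m$. To establish it I would use monoidality: since $[n]=[0]\sqcup\cdots\sqcup[0]$ ($n+1$ summands), one has $Z([n])\cong Z([0])^{\boxplus(n+1)}$, with the coface $Z(d_i)$ realizing $Z([n-1])$ as the sub-presheaf in which the $i$-th join factor is pinned to the $(-1)$-dimensional basepoint carried by the unit map $\Gamma_+[-1]=Z([-1])\to Z([0])$. Feeding the iterated join formula $(A\boxplus B)_m=\bigsqcup_{p+q=m-1}A_p\times B_q$ into this description, an $m$-simplex of $Z([n])$ is a tuple $(z_0,\dots,z_n)$ with $z_j$ of dimension $p_j\ge-1$ and $\sum_{j}(p_j+1)=m+1$; when $n>m$ a pigeonhole count forces some $p_j=-1$, so that factor sits at the basepoint and the simplex lies in the image of $Z(d_j)$. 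I expect the delicate point to be the precise identification of $\operatorname{im}\big(L^Z(\partial\mathbf{\Gamma}_+[n])\big)$ inside $Z([n])$ with $\bigcup_i\operatorname{im}(Z(d_i))$, together with the essential use of $Z$ carrying the monoidal unit to the one-point presheaf $\Gamma_+[-1]$ (so that a join factor at level $-1$ really is the basepoint); without this control the colimit over the infinite category $\int_{\mathbf{\Gamma}_+}X$ need not be levelwise finite, and indeed this is exactly the phenomenon that distinguishes the column-finite cylinder matrices from the subdivision matrix $\breve{\text{sd}}$, which only acts on $\oplus_{i=-1}^{\infty}\mathbb{Z}$.
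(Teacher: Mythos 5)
The paper offers no proof of this Remark (it is stated as an immediate observation), so there is nothing to compare against; I will assess your argument on its own terms. Your treatment of item (ii) is correct and complete: a finite augmented semi-simplicial set is the canonical colimit of representables over its category of elements, which is a finite category; $L^Z$ preserves colimits and sends $\mathbf{\Gamma}_+[n]$ to the finite object $Z([n])$; a finite colimit of finite objects is finite both levelwise and in dimension; and monoidality then restricts from Proposition \ref{superj} because the subcategory is a full monoidal subcategory containing $\Gamma_+[-1]=Z([-1])$.

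Item (i) is where the real issue lies, and your own closing caveat is prescient: the ``key lemma'' you need --- that $Z([n])$ has no $m$-simplices outside the union of its $n+1$ faces when $n>m$ --- is false for a general monoidal $Z$, because a join factor of dimension $-1$ need not be the basepoint selected by the unit map $\Gamma_+[-1]=Z([-1])\to Z([0])$. Concretely, let $A=\Gamma_+[-1]\sqcup\Gamma_+[-1]$ (so $A_{-1}=\{a,b\}$ and $A_m=\emptyset$ for $m\geq 0$), pointed by $a$; this determines a monoidal functor $Z$ with $Z([n])=A^{\boxplus(n+1)}$, concentrated in degree $-1$ with $2^{n+1}$ elements, and each $Z([n])$ even lies in $(\mathbf{Sets}^{\mathbf{\Gamma}_+^{op}})_{\rm fin}$. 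Here $\breve Z([n])_{-1}=\{(b,\dots,b)\}\neq\emptyset$ for every $n$, so your levelwise stabilization fails; and indeed, taking $X$ to be the terminal presheaf (one simplex in every dimension, hence an object of $(\mathbf{Sets}_{\rm fin})^{\mathbf{\Gamma}_+^{op}}$), one gets $(X\widetilde{\vartriangleright}Z)_{-1}=\mathrm{colim}_{[n]\in\mathbf{\Gamma}_+}\{a,b\}^{n+1}$, which is countably infinite because the number of occurrences of $b$ is invariant under all the transition maps (they only insert copies of $a$). So item (i) is not merely unproved by your argument --- it is false as stated. The missing hypothesis is exactly the one the paper imposes later, in the Remark following Theorem \ref{tildebigodottildedot}: for each $q$ there is $n_q$ with $\breve Z([n])_q=\emptyset$ for all $n\geq n_q$. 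A clean sufficient condition that rescues your pigeonhole argument is $|Z([0])_{-1}|=1$: then every dimension-$(-1)$ join factor is forced to be the basepoint, your key lemma holds, and the skeletal-filtration argument goes through exactly as you describe.
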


\subsection{Augmented integer sequences and matrices}

We study the second kind of mathematical objects in this work: augmented integer sequences (and matrices).
The set of integer numbers $\mathbb{Z}$ admits the structure of a discrete category. However, we can also consider it as a groupoid ${\mathbf{Z}}$ where the cardinal $|\text{Hom}_{{\mathbf{Z}}}(n,m)|=1$ and the unique morphism from $n$ to $m$ is denoted by $m-n\colon n \to m$, for every pair of integers $n,m$. The sum of integers can be easily extended to a functor
$$+:\mathbf{Z}\times \mathbf{Z}\rightarrow \mathbf{Z},\hspace{4pt}(n,m)\mapsto n+m$$ Taking $+$ as a tensor product and $0$ as a unit object, it is immediate to check that $({\bf{Z}}, +, 0)$ has the structure of a strict symmetric monoidal category and also of a strict categorical group.

Given any small category $\mathbf{J}$, the functor category $\mathbf{Z^J}$ has an induced strict symmetric monoidal category structure (in addition, it is a strict symmetric categorical group).  We consider $(\mathbf{Z^J})_{\text{fin}}$ the full subcategory of $\mathbf{Z^J}$ consisting of functors $c:\mathbf{J}\to \mathbf{Z}$ such that there exists a finite set of objects $F_c$ satisfying that $c(j)=0$, for all $j\in \mathbf{J}\setminus F_c$.

Now, if $\mathbb{N}$ denotes the set of natural numbers ($0$ is also included as a natural number), take the discrete category $\mathbb{N}_+=\mathbb{N}\cup \{-1\}$. This category can be considered as a non-full subcategory of both $\mathbf{\Gamma}_+$ and $\mathbf{\Gamma}_+^{op}$ through the (inclusion) functor $n\to [n]$. Observe that the category $\mathbb{N}_+$ is self-dual, that is, $\mathbb{N}_+=\mathbb{N}_+^{op}$.

Given a functor $c\in (\mathbf {Z}^{\mathbb{N}_+^{op}})_{\text{fin}}$ with $c\not=0$, the \emph{dimension of $c$} is the integer
$\text{dim}(c):=\min\{k \hspace{3pt}|\hspace{3pt}c_i=0 \hspace{3pt}\mbox{for all}\hspace{3pt} i > k,  i  \in \mathbb{N}_+\}.$
If $c=0$, then we set $\text{dim}(c)=-\infty .$

The Yoneda embedding
$\Yoneda:{\mathbb{N}_+}\rightarrow \mathbf{Sets}^{{\mathbb{N}_+^{op}}}$ associated to $\mathbb{N}_+$ (note that this functor is a restriction of the Yoneda functor $\Yoneda:{{\bf \Gamma}_+}\rightarrow \mathbf{Sets}^{{\bf \Gamma}_+^{op}}$) induces, by applying the cardinal operator to the corresponding hom-sets, a functor $\yoneda:{\mathbb{N}_+}\rightarrow \mathbf{Z}^{\mathbb{N}_+^{op}}$
satisfying $\yoneda(n)(j)=\delta_{n,j}$, for all $n, j\in \mathbb{N}_+.$ Here $\delta_{n,j}$ denotes the Kronecker delta
$$\delta_{n,j}=\begin{cases}
1, & n=j \\
0, & n\neq j
\end{cases}
$$
Therefore, $\yoneda(n)\in (\mathbf{Z}^{\mathbb{N}_+^{op}})_{\text{fin}}$ and $\mbox{dim}(\yoneda(n))=n,$ for all $n\in \mathbb{N}_+$. This way, we actually have a functor
$\yoneda:{\mathbb{N}_+}\rightarrow (\mathbf{Z}^{\mathbb{N}_+^{op}})_{\text{fin}}.$

\medskip
\begin{defi} An \emph{augmented integer sequence} is a functor $$a:\mathbb{N}_+^{op} \to {\mathbb{Z}}.$$
 We will denote an augmented sequence $a$ by means of a row matrix
$$a=(a_{-1} \hspace{5pt} a_0 \hspace{5pt} a_1\hspace{5pt} a_2\hspace{5pt} \cdots ).$$
However, in some cases, this row matrix will be denoted by (using commas): $a=(a_{-1}, a_0, a_1, a_2, \cdots ).$

Analogously, an \emph{augmented integer co-sequence} is a functor $b:\mathbb{N}_+ \to  {\mathbb{Z}}.$
We will denote an augmented co-sequence
$b$ by means of a column matrix
$$b=\begin{pmatrix}
  b_{-1}       \\
  b_0  \\
  b_1\\
   \vdots\\
\end{pmatrix}$$ \noindent or $b=(b_{-1} \, b_0 \, b_1\,\cdots )^T$ , where $T$ denotes the transposition operator.
\end{defi}

\medskip
\begin{defi} An \emph{augmented integer matrix} is a functor $U:\mathbb{N}_+ \times  \mathbb{N}_+^{op} \to {\mathbb{Z}}$.
An augmented matrix $U$ will be denoted by its usual form
$$U=\begin{pmatrix}
   U_{-1, -1}     &   U_{-1, 0} &   U_{-1, 1} & \cdots \\
  U_{0, -1}     &   U_{0, 0} &   U_{0, 1} & \cdots  \\
  U_{1, -1}     &   U_{1, 0} &   U_{1, 1} & \cdots  \\
   \vdots    &      \vdots &      \vdots & \ddots
\end{pmatrix}$$
\end{defi}

\medskip
Note that, as there are isomorphisms of categories $(\mathbb{Z}^{\mathbb{N}_+ }  )^{\mathbb{N}_+^{op}}   \cong  \mathbb{Z}^{\mathbb{N}_+ \times  \mathbb{N}_+^{op}}  \cong \mathbb{Z}^{\mathbb{N}_+^{op} \times  \mathbb{N}_+} \cong  (\mathbb{Z}^{\mathbb{N}_+ ^{op}} )^{\mathbb{N}_+},$
any augmented  matrix $U:\mathbb{N}_+ \times  \mathbb{N}_+^{op} \to {\mathbb{Z}}$ may be considered as an object in any of the categories above.

\medskip

If $a\in (\mathbf {Z}^{{\mathbb{N}_+^{op}}})_{\text{fin}}$ and $b:{\mathbb{N}_+} \to \mathbf{C}$ is any functor, then we have that they are of the form
$a=(a_{-1},a_0,a_1,\cdots a_n, 0,0,0,\cdots)\hspace{6pt}\mbox{and}\hspace{6pt}b=(b_{-1},b_0,b_1,b_2,b_3,\cdots )^T,$ respectively.
Therefore, $a\cdot b=\sum_{i=-1}^{+\infty} a_i b_i$ is well defined and we have an induced bifunctor
$$(-){\cdot} (-):(\mathbf {Z}^{{\mathbb{N}_+^{op}}})_{\text{fin}}\times \mathbf {Z}^{{\mathbb{N}_+}} \to \mathbf {Z},\hspace{10pt} (a,b)\mapsto a\cdot b.$$
Similarly, we have a bifunctor
$$(-){\cdot} (-):\mathbf {Z}^{{\mathbb{N}_+^{op}}}\times (\mathbf {Z}^{{\mathbb{N}_+}})_{\text{fin}} \to \mathbf {Z},\hspace{10pt} (c,d)\mapsto c\cdot d.$$

Taking into account the transposition isomorphism $(-)^T:\mathbf{Z}^{{\mathbb{N}_+^{op}}}\stackrel{\cong }{\rightarrow }\mathbf {Z}^{{\mathbb{N}_+}},$ the composition induced by the identity on the first variable and the transposition on the second variable induces the scalar (or inner) product:
$$\langle -, -\rangle :(\mathbf {Z}^{{\mathbb{N}_+^{op}}})_{\text{fin}} \times (\mathbf {Z}^{{\mathbb{N}_+^{op}}})_{\text{fin}}\to \mathbf {Z}.$$
Namely, if $a=(a_{-1},a_0,a_1,\cdots a_n, 0\cdots)$ and $b=(b_{-1},b_0,b_1,\cdots b_m, 0\cdots)$, then
$$\langle a, b\rangle=a\cdot b^T= \sum_{i=-1}^{\min\{n,m\}} a_i b_i.$$

One has the following canonical extended bifunctors of the dot-product:
$$(-)\cdot (-):(\mathbf{Z}^{{\mathbb{N}_+^{op}}})_{\text{fin}} \times ({\mathbf{Z}^{{\mathbb{N}_+^{op}}}})^{{\mathbb{N}_+}}\to \mathbf{Z}^{{\mathbb{N}_+^{op}}},\hspace{8pt} (a,B)\mapsto a\cdot B,$$
$$(-)\cdot (-):\mathbf{Z}^{{\mathbb{N}_+^{op}}} \times (({\mathbf{Z}^{{\mathbb{N}_+}}})_{\text{fin}})^{{\mathbb{N}_+^{op}}}\to \mathbf{Z}^{{\mathbb{N}_+^{op}}},\hspace{8pt} (a,B)\mapsto a\cdot B,$$
$$(-)\cdot (-):(\mathbf{Z}^{{\mathbb{N}_+^{op}}})^{{\mathbb{N}_+}} \times
({\mathbf{Z}^{{\mathbb{N}_+}}})_{\text{fin}}\to  \mathbf{Z}^{{\mathbb{N}_+}},\hspace{8pt} (A,b)\mapsto A\cdot b,$$
$$(-)\cdot (-):((\mathbf{Z}^{{\mathbb{N}_+^{op}}})_{\text{fin}})^{{\mathbb{N}_+}} \times
{\mathbf{Z}^{{\mathbb{N}_+}}}\to  \mathbf{Z}^{{\mathbb{N}_+}},\hspace{8pt} (A,b)\mapsto A\cdot b,$$

$$(a\cdot B)_{j}=\sum_{k \in \mathbb{N}_+}a_{k} B_{k,j}, \quad (A\cdot b)_{i}=\sum_{k \in \mathbb{N}_+} A_{i,k} b_{k}.$$

$$(-)\cdot (-):((\mathbf{Z}^{{\mathbb{N}_+^{op}}})_{\text{fin}} )^{{\mathbb{N}_+}} \times  ({\mathbf{Z}^{{\mathbb{N}_+}}})^{{\mathbb{N}_+^{op}}}\to \left (\mathbf{Z}^{{\mathbb{N}_+}}\right )^{{\mathbb{N}_+^{op}}},\hspace{8pt} (A,B)\mapsto A\cdot B,$$
$$(-)\cdot (-):(\mathbf{Z}^{{\mathbb{N}_+^{op}}})^{{\mathbb{N}_+}} \times
 \left (({\mathbf{Z}^{{\mathbb{N}_+}}})_{\text{fin}}\right)^{{\mathbb{N}_+^{op}}}\to (\mathbf{Z}^{{\mathbb{N}_+^{op}}} )^{{\mathbb{N}_+}},\hspace{8pt} (A,B)\mapsto A\cdot B,$$

$$(A\cdot B)_{i,j}=\sum_{k \in \mathbb{N}_+}A_{i,k} B_{k,j}.$$

\medskip
The category ${\mathbf{Z}}^{{\mathbb{N}_+^{op}}}$ has the structure of a ring $( {\mathbf{Z}}^{{\mathbb{N}_+^{op}}}, +,\times)$, which is induced by the ring structure of $({\mathbf{Z}},+,\times)$ by pointwise operation.
For $a, b \in \mathbf{Z}$ we will also use the notation $a\times b = a b$.
However, we may consider a new symmetric monoidal structure on ${\mathbf{Z}}^{{\mathbb{N}_+^{op}}}$. This is given by the join product:

\medskip

The \emph{join product} of $a, b \in{\bf{Z}}^{{\mathbb{N}_+^{op}}}$, denoted as $a\boxplus b$, is given by the following formula:
$$(a\boxplus b)_m:=\sum_{p+q=m-1} a_p b_q,  \quad p, q \in {\mathbb{N}_+}.$$

In this case the unit object is given as ${\bf 1}_{-1}$ where $({\mathbf{1}}_{-1})_i=\delta_{-1, i}$ is the Kronecker delta. In this work, for $k \in \mathbb{N}_+$,
${\mathbf{1}}_{k}$ will denote the augmented sequence given by $({\mathbf{1}}_{k})_i=\delta_{k, i}$.

The category  ${\mathbf{Z}}^{{\mathbb{N}_+^{op}}}$ equipped with the join product $\boxplus$ and the unit object ${\bf 1}_{-1}$  has the structure of a strict symmetric monoidal category (\cite{GHR2022}).  Moreover, the induced functor $\yoneda\colon (\mathbb{N}_+, \sqcup, [-1]) \to ({\mathbf{Z}}^{{\mathbb{N}_+^{op}}},\boxplus, {\bf 1}_{-1}) $ is monoidal.

Obviously, $(({\mathbf{Z}}^{{\mathbb{N}_+^{op}}})_{\text{fin}},\boxplus, {\bf 1}_{-1}) $ is a monoidal subcategory of  $({\mathbf{Z}}^{{\mathbb{N}_+^{op}}},\boxplus, {\bf 1}_{-1}) $.

If $a, b \in {\bf{Z}}^{{\mathbb{N}_+^{op}}}$ are fixed, then we easily obtain functors
$$a\boxplus (-) ,  (-) \boxplus b \colon  {\bf{Z}}^{{\mathbb{N}_+^{op}}}\to {\bf{Z}}^{{\mathbb{N}_+^{op}}}$$ \noindent given by $c\mapsto a\boxplus c$ and $c\mapsto c\boxplus b$, respectively.

\bigskip
Now, for any $k\in \mathbb{Z}$, we define an operator in the category ${\bf{Z}}^{{\mathbb{N}_+^{op}}}$ (actually, a functor)
$$\mbox{D}_k:{\mathbf{Z}}^{{\mathbb{N}_+^{op}}} \to {\mathbf{Z}}^{{\mathbb{N}_+^{op}}}, \quad b \mapsto \mbox{D}_k(b)$$ as follows:
given $b\in{\mathbf{Z}}^{{\mathbb{N}_+^{op}}}$ and $i\in {\mathbb{N}_+}$,

\begin{itemize}
\item[] If $k\geq 0$, then $(\mbox{D}_k(b))_i=b_{i+k}$,

\item[] If $k\leq 0$, then:
$$(\mbox{D}_k(b))_i=
\begin{cases}
b_{i+k},    & \mbox{if}\hspace{4pt} i+k\geq -1, \\
0,   &  \mbox{if}\hspace{4pt}  i+k<-1.
\end{cases}
$$
\end{itemize}
\medskip
\begin{defi}
For any given $b\in {\mathbf{Z}}^{{\mathbb{N}_+^{op}}}$, we define $R(b)\in \left (({\bf{Z}}^{{\mathbb{N}_+}})_{\text{fin}}\right ) ^{\mathbb{N}_+^{op}}$, the \emph{shifting} of $b$, by the formula
$(R(b))_{i}=\mbox{D}_{-(i+1)}(b),$ for all $i\in {\mathbb{N}_+}$. We may also see it as the matrix
$$R(b)=\begin{pmatrix} \mbox{D}_{0}(b)\\ \mbox{D}_{-1}(b)\\ \mbox{D}_{-2}(b)\\ \vdots\end{pmatrix}
=\begin{pmatrix}
 b_{-1}  &  b_0 & b_1 & b_2 & \cdots \\
   0 &  b_{-1} &  b_0 &  b_1 & \cdots  \\
  0  &  0 &   b_{-1} & b_0 & \cdots  \\
   \vdots    &      \vdots &      \vdots & \ddots
\end{pmatrix}$$
\end{defi}

This construction naturally gives rise to a functor
$R:{\mathbf{Z}}^{{\mathbb{N}_+^{op}}} \to \left (({\bf{Z}}^{{\mathbb{N}_+}})_{\text{fin}}\right ) ^{\mathbb{N}_+^{op}}$
which satisfies the equalities
$$a\boxplus b=a\cdot R(b)=b\cdot R(a)$$ \noindent for all $a, b \in {\mathbf{Z}}^{{\mathbb{N}_+^{op}}}$.

Now we present an interesting construction: the cone of a sequence.
Indeed, the \emph{cone} of $c\in {\mathbf{Z}}^{{\mathbb{N}_+^{op}}}$ is the sequence $\mbox{con}(c)\in {\mathbf{Z}}^{{\mathbb{N}_+^{op}}}$ defined as $\mbox{con}(c):=c+\text{D}_{-1}(c)$. That is to say,
$$
\mbox{con}(c)_{i}=
\begin{cases}
c_i+c_{i-1},   & \mbox{if}\hspace{4pt} i\geq 0 \\
c_{-1},   & \mbox{if}\hspace{4pt} i=-1.
\end{cases}
$$

We obtain the \emph{cone functor} $\mbox{con}:{\mathbf{Z}}^{{\mathbb{N}_+^{op}}}\to {\mathbf{Z}}^{{\mathbb{N}_+^{op}}}.$ Considering $\mathbf{c}=\mbox{con}( {\bf 1}_{-1})= {\bf 1}_{-1}+{\bf 1}_0 \in {\mathbf{Z}}^{{\mathbb{N}_+^{op}}}$ the cone functor is related to the join and the dot product through the following formula: $$\mbox{con}(b)= {\mathbf{c}}\boxplus b=b\boxplus {\mathbf{c}}=b\cdot R({\mathbf{c}}).$$

To finish this section we consider actions of sequences and matrices.
We first establish the augmented binomial matrix $\text{bin} \in ( ({{\mathbf Z}}^{{\mathbb{N}}_+^{op}})_{\text{fin}})^{\mathbb{N}_+}$ defined as $$\text{bin}_{i,j}=\binom{i+1}{j+1}, \quad i, j \in \mathbb{N}_+$$ and its inverse matriz $\text{bin}^{-1} \in ( ({{\mathbf Z}}^{{\mathbb{N}}_+^{op}})_{\text{fin}})^{\mathbb{N}_+}$ given by
$$\text{bin}_{i,j}^{-1} = (-1)^{i-j} \binom{i+1}{j+1}, \quad i, j \in \mathbb{N}_+.$$

\begin{defi} \label{algebraicaction} Given a sequence $a \in ({{\bf Z}}^{{\mathbb{N}}_+^{op}})_{\text{fin}} $ and a matrix $B \in ({{\mathbf Z}}^{{\mathbb{N}}_+^{op}})^{\mathbb{N}_+}$,
it is defined the \emph{action} of $B$ on $a$ by the formula
$$a \widetilde{\triangleright}B:=(a \cdot \text{bin}^{-1}) \cdot B.$$
The resulting sequence $a \widetilde{\triangleright}B$ is also said to be the \emph{tilde-triangle product} of $a$ and $B$.
 This construction gives rise to an action  functor
$$(-)\widetilde{\triangleright }(-):({{\mathbf Z}}^{{\mathbb{N}}_+^{op}})_{\text{fin}}  \times ({{\mathbf Z}}^{{\mathbb{N}}_+^{op}})^{\mathbb{N}_+}
\to  {{\mathbf Z}}^{{\mathbb{N}}_+^{op}} .$$\end{defi}
We point out that we also have the identity
$a \widetilde{\triangleright}\text{bin} =(a \cdot \text{bin}^{-1}) \cdot \text{bin}=a.$

\begin{obs}
There is an obvious extension of the dot product when $a\in {\mathbf{Z}}^{{\mathbb{N}_+^{op}}}$ is a general sequence and $C\in  (({\mathbf{Z}^{{\mathbb{N}_+}}})_{\text{fin}})^{{\mathbb{N}^{op}_+}}$:
$$(-)\cdot (-):\mathbf{Z}^{{\mathbb{N}_+^{op}}} \times   (({\mathbf{Z}^{{\mathbb{N}_+}}})_{\text{fin}})^{{\mathbb{N}^{op}_+}}\to \mathbf{Z}^{{\mathbb{N}_+^{op}}},\hspace{8pt} (a,C)\mapsto a\cdot C$$
Note that $C\in (({\mathbf{Z}^{{\mathbb{N}_+}}})_{\text{fin}})^{{\mathbb{N}^{op}_+}}$ if, and only if, the columns of the matrix $C$ are eventually constant at $0$. Then, the tilde-triangle product $a \widetilde{\triangleright}B$ can also be defined when $a \in {{\bf Z}}^{{\mathbb{N}}_+^{op}}$ is a general sequence and the matrix $B\in ({\mathbf{Z}^{{\mathbb{N}^{op}_+}}})^{{\mathbb{N}_+}}$ satisfies that $\text{bin}^{-1} \cdot B\in (({\mathbf{Z}^{{\mathbb{N}_+}}})_{\text{fin}})^{{\mathbb{N}^{op}_+}}.$
Indeed, in this case we can define
$$a \widetilde{\triangleright}B=a \cdot (\text{bin}^{-1} \cdot B)$$
We point out that this definition is compatible with the first one when $a \in ({{\bf Z}}^{{\mathbb{N}}_+^{op}})_{\text{fin}} $ since the matricial product is associative whenever it has sense.
\end{obs}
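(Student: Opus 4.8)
The plan is to verify, in turn, the four assertions packed into the remark, each of which comes down to a careful bookkeeping of support conditions so that every infinite sum that appears actually has only finitely many nonzero terms. I would begin with the characterization of the intermediate category. Unwinding the exponential notation, an element $C$ of $(({\mathbf{Z}^{{\mathbb{N}_+}}})_{\text{fin}})^{{\mathbb{N}^{op}_+}}$ is a functor from $\mathbb{N}_+^{op}$ into the category of finitely supported augmented co-sequences; since $\mathbb{N}_+$ is discrete there are no naturality constraints, so such a $C$ is precisely a choice, for every column index $j\in\mathbb{N}_+$, of a column $C_{\bullet,j}\in(\mathbf{Z}^{\mathbb{N}_+})_{\text{fin}}$. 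The requirement $C_{\bullet,j}\in(\mathbf{Z}^{\mathbb{N}_+})_{\text{fin}}$ says exactly that, reading down the $j$-th column, the entries $C_{i,j}$ vanish for all sufficiently large $i$, which is the intended meaning of ``the columns are eventually constant at $0$''. This settles the stated equivalence in both directions.

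Next I would establish well-definedness of the extended dot product $(a,C)\mapsto a\cdot C$. For each fixed $j$ the entry $(a\cdot C)_j=\sum_{k\in\mathbb{N}_+}a_k C_{k,j}$ is a sum in which only the indices $k$ with $C_{k,j}\neq 0$ contribute; by the previous paragraph there are only finitely many such $k$, so the sum is a genuine integer regardless of whether $a$ has finite support. Thus $a\cdot C\in\mathbf{Z}^{\mathbb{N}_+^{op}}$ is defined for every $a\in\mathbf{Z}^{\mathbb{N}_+^{op}}$, and functoriality in both variables is routine, exactly as for the dot products already defined. This is the only place where the hypothesis on the columns of $C$ is used, and it is precisely what is needed.

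With these two facts in hand, the extension of the tilde-triangle product is almost immediate. First observe that $\text{bin}^{-1}$ is lower triangular: since $\binom{i+1}{k+1}=0$ whenever $k>i$, the $i$-th row of $\text{bin}^{-1}$ is supported on $\{k\le i\}$ and hence is finitely supported. Consequently the matrix product $\text{bin}^{-1}\cdot B$ is defined for an arbitrary $B\in(\mathbf{Z}^{\mathbb{N}_+^{op}})^{\mathbb{N}_+}$, because each entry $(\text{bin}^{-1}\cdot B)_{i,j}=\sum_{k}\text{bin}^{-1}_{i,k}B_{k,j}$ is a finite sum over $k\le i$. The standing hypothesis is exactly that this matrix lies in $(({\mathbf{Z}^{\mathbb{N}_+}})_{\text{fin}})^{\mathbb{N}_+^{op}}$, so by the extended dot product above the expression $a\cdot(\text{bin}^{-1}\cdot B)$ makes sense for every sequence $a$, and we may take it as the definition of $a\widetilde{\triangleright}B$.

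Finally I would check compatibility with Definition \ref{algebraicaction} when $a\in(\mathbf{Z}^{\mathbb{N}_+^{op}})_{\text{fin}}$. Because $a$ has finite support and $\text{bin}^{-1}$ is lower triangular, $a\cdot\text{bin}^{-1}$ is again finitely supported (its $j$-th entry vanishes once $j$ exceeds $\dim(a)$), so $(a\cdot\text{bin}^{-1})\cdot B$ is defined. The required identity $(a\cdot\text{bin}^{-1})\cdot B=a\cdot(\text{bin}^{-1}\cdot B)$ is then the associativity of the triple product, i.e. the interchange of summations $\sum_{m}\big(\sum_{k}a_k\,\text{bin}^{-1}_{k,m}\big)B_{m,j}=\sum_{k}a_k\big(\sum_{m}\text{bin}^{-1}_{k,m}B_{m,j}\big)$. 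The real point, and the only step requiring care, is that this interchange is legitimate: the factor $a$ restricts $k$ to the finite set $\{k\le\dim(a)\}$, and for each such $k$ triangularity restricts $m$ to $\{m\le k\}$, so the whole double sum has only finitely many nonzero terms and Fubini for finite sums applies. This is the main (and essentially the only) obstacle, since associativity of products of infinite matrices fails without such support control; everything else is a direct unwinding of the finiteness conventions.
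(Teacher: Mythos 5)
Your verification is correct and follows the only natural route; the paper states this remark without proof, treating the finiteness bookkeeping as obvious, and your argument (finitely supported columns make each entry of $a\cdot C$ a finite sum; lower triangularity of $\text{bin}^{-1}$ makes $\text{bin}^{-1}\cdot B$ well defined; finite support of $a$ plus triangularity justify the interchange of sums giving associativity) is exactly the intended justification. No gaps.
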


\subsection{The sequential cardinal functor}

Now we recall from \cite{GHR2022} the relationship between the category of augmented semi-simplicial finite sets and the category of augmented integer sequences. The key point is the sequential cardinal functor, which applies every finite augmented semi-simplicial set to the sequence constituted by the cardinal of the set of $n$-simplices. This sequential cardinal functor also preserves certain structures.

Given any functor $X:\mathbf{\Gamma}_+^{op} \to \mathbf{Sets}_{\text{fin}}$ we may consider the diagram
$$
\xymatrix{\ar[d]_{\text{in}} \mathbb{N}_+^{op} &  {\mathbf Z}\\
{\mathbf{\Gamma}_+^{op}} \ar[r]_X  & {\mathbf{Sets}_{\text{fin}}} \ar[u]_{|-|}}
$$
\noindent where $\text{in}$ denotes the inclusion functor and $|-|$ the functor that gives the cardinal of any finite set.
\medskip
\begin{defi} The \emph{sequential cardinal} of a $\mathbf{\Gamma}_+^{op}$- finite set, $X\in (\mathbf{Sets}_{\text{fin}})^{\mathbf{\Gamma}_+^{op}},$ is defined as the augmented sequence:
$$|X|:\mathbb{N}_+^{op}  \to {\mathbf Z}$$ \noindent given by the composite $|X|:=|-|\circ X\circ \text{in}$, that is,
$$|X|_n:=|X_n|,\hspace{8pt}n\in {\mathbb{N}_+}.$$
\end{defi}

We observe that there is an induced functor $|-|:(\mathbf{Sets}_{\text{fin}})^{\mathbf{\Gamma}_+^{op}} \to {\mathbf Z}^{\mathbb{N}_+^{op}}$ where, for morphisms $f:X \to Y$, it is defined as $|f|:=|Y|-|X|$; that is, $|f|_n=|Y|_n-|X|_n, n \in \mathbb{N}_+$.

On the one hand, we showed in \cite{GHR2022} that $$((\mathbf{Sets}_{\text{fin}})^{\mathbf{\Gamma}_+^{op}}, \sqcup, \Gamma_+^\emptyset)\hspace{10pt}\mbox{and} \hspace{10pt} ((\mathbf{Sets}_{\text{fin}})^{\mathbf{\Gamma}_+^{op}}, \times, \Gamma_+^1)$$  are monoidal categories.  On the other, the ring structure $({\mathbf Z},+, \times)$ induces monoidal structures $( {\mathbf Z}^{\mathbb{N}_+^{op}}, +,0)$, $( {\mathbf Z}^{\mathbb{N}_+^{op}}, \times ,1)$.  Taking into account the identities:
$$|X\sqcup Y|=|X|+|Y|, \quad |X\times Y|=|X|{\times} |Y|$$
$$|\Gamma_+^\emptyset|=0, \quad |\Gamma_+^1|=1$$
\noindent we have that the functor $|-|:(\mathbf{Sets}_{\text{fin}})^{\mathbf{\Gamma}_+^{op}} \to {\mathbf Z}^{\mathbb{N}_+^{op}}$ preserves the monoidal structures induced by coproducts and products (see \cite{GHR2022}):
$$|-|:((\mathbf{Sets}_{\text{fin}})^{\mathbf{\Gamma}_+^{op}}, \sqcup, \Gamma_+^\emptyset)  \to( {\mathbf Z}^{\mathbb{N}_+^{op}}, +,0)$$
$$|-|:((\mathbf{Sets}_{\text{fin}})^{\mathbf{\Gamma}_+^{op}}, \times, \Gamma_+^1) \to( {\mathbf Z}^{\mathbb{N}_+^{op}}, \times ,1).$$

\bigskip
\begin{defi}\label{complejosdiscoyesferaysubdivisones}
Associated to the $\mathbf{\Gamma}_+^{op}$-sets, $\Gamma_+[n]$ and
$S_+[n-1]=\Gamma_+[n]\setminus \{\iota_n\}$, where $\iota_n$ is the identity of $[n] \in \mathbf{\Gamma}_+$,
we consider the sequential cardinals:
$${\gamma_+}[n]:=|{\Gamma_+}[n]|, \quad s_+[n-1]:=|S_+[n-1]|.$$
\medskip
For every $n \in \mathbb{N}_+$, the sequential cardinal ${\gamma_+}[n]=|{\Gamma_+}[n]|$ is given by the binomial coefficients:
$$\begin{array}{lll}
|{\Gamma_+}[n]| & = & (|{\Gamma_+}[n]_{-1}|, |{\Gamma_+}[n]_0|,|\Gamma[n]_1|, \cdots, |{\Gamma_+}[[n]_n|,|\emptyset |,|\emptyset |,\cdots) \\
& = & \small{\left (\binom{n+1}{0},\binom{n+1}{1} ,\binom{n+1}{2}, \cdots, \binom{n+1}{n+1} ,0,0, \cdots \right ).}
\end{array}$$
\end{defi}

\bigskip
Now we consider $((\mathbf{Sets}_{\rm{fin}})^{\mathbf{\Gamma}_+^{op}}, \boxplus, {\Gamma_+}[-1])$, which is a  monoidal subcategory of
$((\mathbf{Sets})^{\mathbf{\Gamma}_+^{op}}, \boxplus, {\Gamma_+}[-1])$. The sequential cardinal functor preserves the monoidal structure, that is
$$
|X \boxplus Y|=|X| \boxplus |Y|, \quad | \Gamma_+[-1]|={\bf 1}_{-1}
$$
\noindent for all $X,Y\in (\mathbf{Sets}_{\text{fin}})^{\mathbf{\Gamma}_+^{op}}$. In other words (see \cite{GHR2022}) the sequential cardinal functor $$|\cdot|:((\mathbf{Sets}_{\rm{fin}})^{\mathbf{\Gamma}_+^{op}}, \boxplus, \Gamma_+[-1]) \to ({\mathbf Z}^{\mathbb{N}_+^{op}}, \boxplus, {\bf 1}_{-1})$$ \noindent is monoidal.

\bigskip
The cone functors for semi-simplicial sets and augmented sequences are also related through the sequential cardinal functor:
\medskip
\begin{prop}\label{conocardinal} The following diagrams are commutative:
$$
\xymatrix{\ar[d]_{|\cdot|} (\mathbf{Sets}_{\text{fin}})^{\mathbf{\Gamma}_+^{op}} \ar[r]^{\text{Con}_l} & (\mathbf{Sets}_{\text{fin}})^{\mathbf{\Gamma}_+^{op}}\ar[d]^{|\cdot|}
 & \ar[d]_{|\cdot|} (\mathbf{Sets}_{\text{fin}})^{\mathbf{\Gamma}_+^{op}} \ar[r]^{\text{Con}_r} & (\mathbf{Sets}_{\text{fin}})^{\mathbf{\Gamma}_+^{op}}\ar[d]^{|\cdot|} \\
\mathbf {Z}^{{\mathbb{N}_+^{op}}}\ar[r]_{\text{con}} & \mathbf {Z}^{{\mathbb{N}_+^{op}}} &
\mathbf {Z}^{{\mathbb{N}_+^{op}}}\ar[r]_{\text{con}} & \mathbf {Z}^{{\mathbb{N}_+^{op}}}}
$$
Moreover, as $\gamma_+[0]=|{\Gamma_+}[0]|={\bf{c}}$, we have $\gamma_+[n]=|{\Gamma_+}[n]|=\boxplus^{n+1}{\mathbf{c}}$ (the $(n+1)$-fold join of $\mathbf{c}$ with itself).
\end{prop}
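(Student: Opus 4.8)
The plan is to reduce both claims to facts already established in the excerpt: the monoidality of the sequential cardinal functor with respect to the join product, namely $|X \boxplus Y| = |X| \boxplus |Y|$ and $|\Gamma_+[-1]| = \mathbf{1}_{-1}$, together with the characterization of the algebraic cone operator as a join, $\text{con}(b) = \mathbf{c} \boxplus b = b \boxplus \mathbf{c}$, where $\mathbf{c} = \mathbf{1}_{-1} + \mathbf{1}_0$. First I would record the base computation $\gamma_+[0] = \mathbf{c}$: evaluating the binomial formula for $|\Gamma_+[n]|$ at $n = 0$ gives $(\binom{1}{0}, \binom{1}{1}, 0, 0, \dots) = (1, 1, 0, 0, \dots)$, which is exactly $\mathbf{1}_{-1} + \mathbf{1}_0 = \mathbf{c}$.

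For the commutativity of the left-hand diagram, I would take an arbitrary $X \in (\mathbf{Sets}_{\text{fin}})^{\mathbf{\Gamma}_+^{op}}$ and chain the definitions together:
$$|\text{Con}_l(X)| = |\Gamma_+[0] \boxplus X| = |\Gamma_+[0]| \boxplus |X| = \mathbf{c} \boxplus |X| = \text{con}(|X|),$$
where the second equality is the monoidality of $|\cdot|$, the third is the base computation, and the last is the join characterization of $\text{con}$. The right-hand diagram is entirely analogous, using instead $\text{con}(b) = b \boxplus \mathbf{c}$ and $\text{Con}_r(X) = X \boxplus \Gamma_+[0]$. Because the monoidal isomorphisms for $|\cdot|$ are natural, this identification holds coherently over morphisms, so the diagrams commute as diagrams of functors and not merely objectwise.

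For the iterated-join formula I would argue by induction on $n$. The relation $\text{Con}_l(\Gamma_+[k]) = \Gamma_+[k+1]$ recorded earlier gives $\Gamma_+[n] = \text{Con}_l(\Gamma_+[n-1])$, so applying the now-established left diagram yields
$$\gamma_+[n] = |\Gamma_+[n]| = |\text{Con}_l(\Gamma_+[n-1])| = \text{con}(\gamma_+[n-1]) = \mathbf{c} \boxplus \gamma_+[n-1].$$
Starting from $\gamma_+[-1] = |\Gamma_+[-1]| = \mathbf{1}_{-1}$ (the join unit) and unwinding the recursion $n+1$ times collapses the expression to $\boxplus^{n+1}\mathbf{c}$. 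Alternatively, one can invoke $\boxplus^{n+1}\Gamma_+[0] \cong \Gamma_+[n]$ (iterating $\Gamma_+[n] \boxplus \Gamma_+[m] \cong \Gamma_+[n+m+1]$) and apply monoidality of $|\cdot|$ directly. Since every ingredient is already proved in the excerpt, there is no genuine obstacle here; the only care required is verifying the base case $\gamma_+[0] = \mathbf{c}$ and tracking that the diagrams commute naturally rather than merely pointwise.
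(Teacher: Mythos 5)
Your proposal is correct, and it is precisely the argument the paper leaves implicit: Proposition \ref{conocardinal} is stated without proof, immediately after the facts you use (monoidality of $|\cdot|$ with respect to $\boxplus$, the identity $\mbox{con}(b)=\mathbf{c}\boxplus b=b\boxplus\mathbf{c}$, and $\Gamma_+[n]\boxplus\Gamma_+[m]\cong\Gamma_+[n+m+1]$). Your chain $|\mbox{Con}_l(X)|=|\Gamma_+[0]|\boxplus|X|=\mathbf{c}\boxplus|X|=\mbox{con}(|X|)$, the base computation $\gamma_+[0]=\mathbf{c}$, and the induction for $\gamma_+[n]=\boxplus^{n+1}\mathbf{c}$ fill the gap exactly as intended, and your remark on morphisms is consistent with the paper's convention $|f|:=|Y|-|X|$ together with the additivity of $\mbox{con}$.
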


The cardinal functor $| \cdot |  \colon \mathbf{Sets_{\text{fin}}}\to \mathbf {Z}$ induces a canonical funtor
$|\cdot|:((\mathbf{Sets}_{\text{fin}})^{\mathbf{\Gamma}_+^{op}})^{\mathbf{\Gamma}_+} \to ({\mathbf Z}^{\mathbb{N}_+^{op}})^{\mathbb{N}_+},$ $Z \mapsto |Z|, $
where $|Z|= |\cdot | \circ Z \circ \text{in}$ is the composite
$$
\xymatrix{ \mathbb{N}_+ \ar[r]^{\text{in}} & \Gamma_+  \ar[r]^(.3){Z} & (\mathbf{Sets}_{\text{fin}})^{\mathbf{\Gamma}_+^{op}}
\ar[r]^(.6){|\cdot |} & {\mathbf {Z}^{\mathbb{N}_+^{op}}}.}
$$
In particular, the \emph{augmented Pascal matrix} is defined as the augmented co-sequence given by the composite:
$$
\xymatrix{ \mathbb{N}_+ \ar[r]^{\text{in}} & \Gamma_+  \ar[r]^(.35){\Yoneda} & (\mathbf{Sets}_{\text{fin}})^{\mathbf{\Gamma}_+^{op}}
\ar[r]^(.6){|\cdot |} & {\mathbf {Z}^{\mathbb{N}_+^{op}}}.}
$$
Note that  $ |\cdot | \circ \Yoneda \circ \text{in}=\text{bin}$. As a consequence, see \cite{GHR2022}, the functor
$$|\cdot|:((\mathbf{Sets}_{\text{fin}})^{\mathbf{\Gamma}_+^{op}})^{\mathbf{\Gamma}_+} \to ({\mathbf Z}^{\mathbb{N}_+^{op}})^{\mathbb{N}_+} $$
\noindent carries the Yoneda augmented semi-cosimplicial set $\Yoneda:\mathbf{{\Gamma}_+} \to \mathbf{Sets}^{\mathbf{\Gamma}_+^{op}}$, matricially represented as
$$\Yoneda=\begin{pmatrix}
{\Gamma_+}[-1]\\
{\Gamma_+}[0]\\
\vdots\\
\end{pmatrix}
$$
to the augmentend Pascal matrix, where each row is the cone of the previous one.
$$|\Yoneda|=\begin{pmatrix}
|{\Gamma_+}[-1]|\\
|{\Gamma_+}[0]|\\
\vdots\\
\end{pmatrix}
=
\begin{pmatrix}
{\gamma_+}[-1]\\
{\gamma_+}[0]\\
\vdots\\
\end{pmatrix}
=
\text{bin}
$$

To finish this section, we establish an important result which asserts that, under mild restrictions, the sequential cardinal functor carries the action to the triangle product. We recall from \cite{GHR2022} that a functor  $Z:\mathbf{\Gamma}_+ \to \mathbf{Sets}^{\mathbf{\Gamma}_+^{op}}$ is said to be \emph{regular} if $Z(\varphi)$ is injective
(on each dimension) for every morphism $\varphi $ in $\mathbf{\Gamma}_+ $.

\medskip
\begin{teo}\label{tildebigodottildedot}
If $(\mathbf{Sets}^{\mathbf{\Gamma}_+^{op}})_{\text{reg}}^{\mathbf{\Gamma}_+}$ stands for the full subcategory of
$(\mathbf{Sets}^{\mathbf{\Gamma}_+^{op}})^{\mathbf{\Gamma}_+}$
consisting of regular functors, then the following diagram is commutative:
$$
\xymatrix{\ar[d]_{|-| \times |-|} (\mathbf{Sets}^{\mathbf{\Gamma}_+^{op}})_{\text{fin}}  \times ((\mathbf{Sets}_{\text{fin}})^{\mathbf{\Gamma}_+^{op}})_{\text{reg}}^{\mathbf{\Gamma}_+}
\ar[rr]^(.65){(-)\widetilde{\vartriangleright}(-)} & &  (\mathbf{Sets}_{\text{fin}})^{\mathbf{\Gamma}_+^{op}} \ar[d]^{|-|} \\
({{\mathbf Z}}^{{\mathbb{N}}_+^{op}})_{\text{fin}} \times ({{\mathbf Z}}^{{\mathbb{N}}_+^{op}})^{\mathbb{N}_+} \ar[rr]_(.6){(-)\widetilde{\triangleright}(-)} & & {{\mathbf Z}}^{{\mathbb{N}}_+^{op}}
}
$$
In other words, if $X \in (\mathbf{Sets}^{\mathbf{\Gamma}_+^{op}} )_{\text{fin}}$ and $Z \in ((\mathbf{Sets}_{\text{fin}})^{\mathbf{\Gamma}_+^{op}})_{\text{reg}}$, then
$$|X \tilde{\vartriangleright} Z|=|X| \tilde{\triangleright} |Z|.$$
Moreover, if we specialize $Z:={\Gamma_+}[-]$, then
$|X \widetilde{\vartriangleright} {\Gamma_+}[-]|=|X| \tilde{\triangleright} |{\Gamma_+}[-]|=|X|.$
\end{teo}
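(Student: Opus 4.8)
The plan is to verify the two sides coordinatewise, i.e. to show $|(X\widetilde{\vartriangleright}Z)_m|=(|X|\widetilde{\triangleright}|Z|)_m$ for every $m\in\mathbb{N}_+$. Since $X\widetilde{\vartriangleright}Z=L^Z(X)$, Theorem \ref{ext-pr} presents it as the colimit of $Z\circ\pi_X$ over the category of elements $\int_{\mathbf{\Gamma}_+}X$; evaluating at $[m]$, the set $(X\widetilde{\vartriangleright}Z)_m$ is the quotient of $\bigsqcup_{(k,x)}Z([k])_m$ (one copy of $Z([k])_m$ per simplex $x\in X_k$) by the identifications $(X(\varphi)(x),w)\sim(x,Z(\varphi)_m(w))$ induced by the morphisms $\varphi\colon[j]\to[k]$ of $\mathbf{\Gamma}_+$, where $w\in Z([j])_m$. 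First I would record the two elementary facts that drive the bookkeeping: the number of faces $[j]\hookrightarrow[k]$ in $\mathbf{\Gamma}_+$ equals $\binom{k+1}{j+1}=\text{bin}_{k,j}$, and $\gamma_+[k]\cdot\text{bin}^{-1}=\mathbf{1}_k$, the latter being $\text{bin}\cdot\text{bin}^{-1}=\mathrm{Id}$ read row by row, since $\gamma_+[k]$ is the $k$-th row of $\text{bin}=|\Yoneda|$.

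The heart of the argument is a normal-form statement for this colimit. For each $k$ let $Z([k])_m^{\circ}\subseteq Z([k])_m$ denote the set of \emph{non-degenerate} $m$-simplices, those not lying in the image of $Z(\psi)_m$ for any proper face $\psi\colon[j]\hookrightarrow[k]$ with $j<k$. Using regularity I would prove that every class in $(X\widetilde{\vartriangleright}Z)_m$ has a \emph{unique} representative $(x,w)$ with $w\in Z([k])_m^{\circ}$: existence holds by repeatedly pushing an element down along any proper face it factors through, and uniqueness holds because the only endomorphism of $[k]$ in $\mathbf{\Gamma}_+$ is the identity, so two non-degenerate representatives of the same dimension can only be identified if they are literally equal. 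This gives a bijection $(X\widetilde{\vartriangleright}Z)_m\cong\bigsqcup_k X_k\times Z([k])_m^{\circ}$, hence $|(X\widetilde{\vartriangleright}Z)_m|=\sum_k|X_k|\,|Z([k])_m^{\circ}|$.

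It then remains to express $|Z([k])_m^{\circ}|$ through the full cardinals $|Z([n])_m|=|Z|_{n,m}$. Partitioning $Z([n])_m$ according to the unique non-degenerate simplex each element restricts from, and counting the $\binom{n+1}{k+1}$ faces $[k]\hookrightarrow[n]$, yields $|Z([n])_m|=\sum_k\binom{n+1}{k+1}|Z([k])_m^{\circ}|$; in matrix form the column $(|Z([n])_m|)_n$ equals $\text{bin}$ applied to $(|Z([k])_m^{\circ}|)_k$, so $|Z([k])_m^{\circ}|=\sum_n\text{bin}^{-1}_{k,n}|Z([n])_m|$. Substituting, $|(X\widetilde{\vartriangleright}Z)_m|=\sum_{k,n}|X_k|\,\text{bin}^{-1}_{k,n}|Z([n])_m|=\big((|X|\cdot\text{bin}^{-1})\cdot|Z|\big)_m=(|X|\widetilde{\triangleright}|Z|)_m$, which is precisely Definition \ref{algebraicaction}. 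For the final assertion, $\Gamma_+[-]$ is regular (face maps of representables are injective) and $|\Gamma_+[-]|=|\Yoneda|=\text{bin}$, so $|X\widetilde{\vartriangleright}\Gamma_+[-]|=|X|\widetilde{\triangleright}\text{bin}=(|X|\cdot\text{bin}^{-1})\cdot\text{bin}=|X|$ by the identity recorded after Definition \ref{algebraicaction}.

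The main obstacle is the normal-form step, and this is exactly where regularity must be used in full force. Injectivity of each $Z(\varphi)_m$ guarantees a well-defined preimage along any single face, but one must also rule out an element of $Z([n])_m$ factoring through two faces whose images meet only in a strictly smaller face; that is, one must know the \emph{minimal carrier} of each simplex is well defined. Equivalently, the latching maps $L^Z(S_+[n-1])\to Z([n])$ must be injective, so that attaching-cell pushouts are computed by inclusion–exclusion. Controlling these overlaps of the faces of $Z([n])$ is the delicate point, and checking it for the regular functors at hand — in particular $\Gamma_+[-]$, $\text{Cil}$ and $\text{Sd}$ — is what legitimizes the clean binomial-inverse bookkeeping above.
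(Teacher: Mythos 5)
Your strategy---present $(X\widetilde{\vartriangleright}Z)_m$ as the colimit over $\int_{\mathbf{\Gamma}_+}X$, find a normal form supported on the non-degenerate simplices $Z([k])_m^{\circ}$, and invert the binomial relation $|Z([n])_m|=\sum_k\binom{n+1}{k+1}|Z([k])_m^{\circ}|$---is the natural one, and your $Z([k])_m^{\circ}$ is exactly the set $\breve Z([n])_q$ that the paper introduces in the remark following the theorem. (The paper gives no proof of this statement here; it is recalled from \cite{GHR2022}, so there is nothing to compare with line by line.) The difficulty is that the proof has a genuine gap at precisely the step you flag as delicate: neither the uniqueness of the non-degenerate representative nor the identity $|Z([n])_m|=\sum_k\binom{n+1}{k+1}|Z([k])_m^{\circ}|$ follows from regularity as defined, i.e.\ from levelwise injectivity of each individual $Z(\varphi)$. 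Your uniqueness argument (``the only endomorphism of $[k]$ is the identity, so two non-degenerate representatives can only be identified if they are equal'') is not sufficient, because the zigzag of identifications may pass through higher-dimensional simplices of $X$.

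Concretely, let $Z([-1])=\Gamma_+[-1]$, let $Z([n])$ for $n\geq 0$ consist of one $(-1)$-simplex and one $0$-simplex $b_n$, and let every $Z(\varphi)$ send $b_j\mapsto b_k$. Each $Z(\varphi)$ is levelwise injective, so $Z$ is regular, yet the two vertices of $Z([1])$ coincide, so $b_1$ has no well-defined minimal carrier and $|Z([1])_0|=1\neq 2=\sum_k\binom{2}{k+1}|Z([k])_0^{\circ}|$. Taking $X$ to be the circle made of two $1$-simplices $E,F$ glued along both endpoints $v_0,v_1$ (so $|X|=(1,2,2,0,\dots)$), the classes $(v_0,b_0)$ and $(v_1,b_0)$ are two distinct normal forms identified through $(E,b_1)$, and one computes $|X\widetilde{\vartriangleright}Z|=(1,1,0,\dots)$ while $(|X|\cdot\text{bin}^{-1})\cdot|Z|=(1,-2,2,0,\dots)\cdot|Z|=(1,0,0,\dots)$. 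So the missing ingredient is exactly the condition you name at the very end---injectivity of the latching maps $L^Z(S_+[n-1])_q\to Z([n])_q$, equivalently that distinct faces of $Z([n])$ meet only along common sub-faces---and it must be imposed as a hypothesis or verified for the functors actually used ($\Gamma_+[-]$, $\text{Cil}$, $\text{Cil}_0$, $\text{Cil}_2$, $\text{Sd}$, where it does hold); it cannot be deduced from levelwise injectivity alone, and your proof never establishes it. Granting that condition, the rest of your bookkeeping with $\text{bin}^{-1}$ and the specialization to $Z=\Gamma_+[-]$ are correct.
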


\begin{obs}  In the diagram above, instead of the categories  $(\mathbf{Sets}^{\mathbf{\Gamma}_+^{op}})_{\text{fin}}$ and $({{\mathbf Z}}^{{\mathbb{N}}_+^{op}})_{\text{fin}}$, we can take the larger categories $(\mathbf{Sets}_{\text{fin}})^{\mathbf{\Gamma}_+^{op}}$ and ${{\mathbf Z}}^{{\mathbb{N}}_+^{op}}$ if we reduce $((\mathbf{Sets}_{\text{fin}})^{\mathbf{\Gamma}_+^{op}})_{\text{reg}}^{\mathbf{\Gamma}_+}$ to co-simplicial objects $Z$ verifying that for each $q \in \mathbb{N}_+$,  there is $n_q  \in \mathbb{N}_+$ such that $|\breve Z([n])_q|=0$ for every $n\geq n_q$; here the set $\breve Z([n])_q$ is given by
$$\begin{array}{ll}
\breve{Z}([-1])_q:=Z([-1])_q & \\
\breve{Z}([n])_q:=Z([n])_q\setminus (\cup_{i=0}^n(\varphi_i)_*(Z([n-1])_q), & n\geq 0
\end{array}$$
where  $\varphi_i \colon [n-1] \to [n]$ is the canonical $i$-th increasing inclusion $i\in \{0, \cdots, n\}$.
\end{obs}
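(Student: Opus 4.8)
The plan is to first establish a cleaner, sign-free version of the identity at the level of sets, and only then read off the matrix formula. Writing $Z\circ\pi_X$ for the composite $\int_{\mathbf{\Gamma}_+}X\to\mathbf{\Gamma}_+\xrightarrow{Z}\mathbf{Sets}^{\mathbf{\Gamma}_+^{op}}$, Theorem \ref{ext-pr} gives $X\widetilde{\vartriangleright}Z=L^Z(X)=\mathrm{colim}(Z\circ\pi_X)$, and since colimits of presheaves are computed objectwise, for each $q\in\mathbb{N}_+$ we have $(X\widetilde{\vartriangleright}Z)_q=\mathrm{colim}_{([n],x)\in\int_{\mathbf{\Gamma}_+}X}Z([n])_q$. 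I would prove that, for regular $Z$,
$$|(X\widetilde{\vartriangleright}Z)_q|=\sum_{n\geq-1}|X_n|\cdot|\breve{Z}([n])_q|,$$
where $\breve{Z}([n])_q$ is the set of \emph{new} $q$-simplices introduced in the Remark following the statement. Equivalently, $|X\widetilde{\vartriangleright}Z|=|X|\cdot\breve{Z}$, where $\breve{Z}$ is the augmented matrix with $\breve{Z}_{n,q}=|\breve{Z}([n])_q|$.

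The heart of the argument is a bijection
$$\rho\colon (X\widetilde{\vartriangleright}Z)_q\;\xrightarrow{\ \cong\ }\;\bigsqcup_{n\geq-1}X_n\times\breve{Z}([n])_q.$$
To build it I would use that regularity makes the notion of \emph{minimal carrier} well defined: every $z\in Z([n])_q$ lies in the image of $Z(\sigma)_q$ for a unique smallest face $\sigma\colon[k]\hookrightarrow[n]$, together with a uniquely determined new simplex $\bar z\in\breve{Z}([k])_q$ such that $z=Z(\sigma)_q(\bar z)$; injectivity of every $Z(\varphi)$ is exactly what is meant to guarantee both the existence of a minimal such face and the uniqueness of $\bar z$. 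Given a colimit class represented by $([n],x,z)$, set $\rho([n],x,z):=(X(\sigma)(x),\bar z)$. First I would check that $\rho$ is constant along the generating relations of the colimit: a morphism $\varphi\colon[n]\to[m]$ of $\int_{\mathbf{\Gamma}_+}X$ replaces $z$ by $Z(\varphi)_q(z)$, whose minimal carrier is $\varphi\circ\sigma$ with the same new part $\bar z$, while $X(\varphi\sigma)(x')=X(\sigma)(x)$; hence $\rho$ descends. Surjectivity is immediate, since $(y,w)$ is hit by $([k],y,w)$. For injectivity I would push any representative down along its minimal carrier to the canonical form $([k],X(\sigma)(x),\bar z)$ and observe, again using injectivity of the $Z(\varphi)$, that a non-identity morphism of $\int_{\mathbf{\Gamma}_+}X$ can never carry a new simplex to a new simplex (its image lies in a proper face); thus two canonical forms are identified in the colimit only if they are literally equal.

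Granting the bijection, the remainder is formal. Specialising it to $X=\mathbf{\Gamma}_+[n]$, whose category of elements has a terminal object, yields $L^Z(\mathbf{\Gamma}_+[n])=Z([n])$ and hence $|Z([n])_q|=\sum_{k}\binom{n+1}{k+1}|\breve{Z}([k])_q|$; in matrix form this is precisely $|Z|=\text{bin}\cdot\breve{Z}$, so $\breve{Z}=\text{bin}^{-1}\cdot|Z|$ because $\text{bin}$ is invertible. Combining with the sign-free identity and reassociating the matrix products (legitimate here since $|X|$ is finite),
$$|X\widetilde{\vartriangleright}Z|=|X|\cdot\breve{Z}=|X|\cdot(\text{bin}^{-1}\cdot|Z|)=(|X|\cdot\text{bin}^{-1})\cdot|Z|=|X|\widetilde{\triangleright}|Z|,$$
which is the stated identity; commutativity of the square on morphisms then follows from additivity of $|\cdot|$. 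For the \emph{Moreover}, taking $Z=\mathbf{\Gamma}_+[-]$ gives $\breve{Z}([n])_q=\{\iota_n\}$ when $q=n$ and $\emptyset$ otherwise, so $\breve{Z}$ is the identity matrix (equivalently $|\mathbf{\Gamma}_+[-]|=\text{bin}$ and $\text{bin}^{-1}\cdot\text{bin}=\mathrm{Id}$), whence $|X\widetilde{\vartriangleright}\mathbf{\Gamma}_+[-]|=|X|$.

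The main obstacle is exactly the well-definedness and injectivity of $\rho$, that is, the assertion that regularity forces each simplex of $Z([n])_q$ to have a \emph{unique} minimal carrying face: this is what turns the sequential cardinal of a colimit into an honest sum (a colimit cardinal is not a sum in general, and without controlling the identifications the naive count misbehaves), and it is where the hypothesis that every $Z(\varphi)$ is injective must be used in full strength. Finally, to reach the extension described in the Remark — where $X$ is an arbitrary, not necessarily finite, sequence and $Z$ only satisfies the dimensionwise finiteness $|\breve{Z}([n])_q|=0$ for $n\geq n_q$ — I would observe that this condition says precisely that $\breve{Z}=\text{bin}^{-1}\cdot|Z|$ has columns eventually zero, i.e.\ $\breve{Z}\in((\mathbf{Z}^{\mathbb{N}_+})_{\text{fin}})^{\mathbb{N}_+^{op}}$, so that $|X|\cdot\breve{Z}$ and $(|X|\cdot\text{bin}^{-1})\cdot|Z|$ remain defined and equal by the associativity noted after Definition \ref{algebraicaction}.
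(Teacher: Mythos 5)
Your final paragraph is the part that actually addresses what the Remark adds to Theorem \ref{tildebigodottildedot}, and it is correct: the hypothesis that for each $q$ there is $n_q$ with $|\breve Z([n])_q|=0$ for all $n\geq n_q$ says precisely that the matrix $\breve Z=\text{bin}^{-1}\cdot|Z|$ has eventually-zero columns, so that $|X|\cdot\breve Z$ and $(|X|\cdot\text{bin}^{-1})\cdot|Z|$ are defined and equal for an arbitrary levelwise-finite $X$, and the sum $\sum_n|X_n|\,|\breve Z([n])_q|$ computing $|(X\widetilde{\vartriangleright}Z)_q|$ is finite in each degree. The paper offers no argument for the Remark (and cites the underlying theorem from elsewhere), so your decision to reprove the sign-free identity $|X\widetilde{\vartriangleright}Z|=|X|\cdot\breve Z$ and then deduce everything is a reasonable, self-contained route that matches the mechanism the paper clearly has in mind ($\breve{\text{cil}}=\text{bin}^{-1}\cdot\text{cil}$, $\breve{\text{sd}}=\text{bin}^{-1}\cdot\text{cad}^+$, etc.).

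There is, however, a genuine gap at the central step. You assert that levelwise injectivity of every $Z(\varphi)$ (the paper's notion of regularity) guarantees that each $z\in Z([n])_q$ has a \emph{unique} minimal carrying face. It does not. Take $Z([-1])=\emptyset$ (the empty presheaf), $Z([n])=\Gamma_+[0]$ for all $n\geq 0$, with all structure maps the canonical ones: every $Z(\varphi)$ is injective, so $Z$ is regular, yet the unique $0$-simplex of $Z([1])$ lies in the images of both vertex inclusions $Z(\varphi_0)_0$ and $Z(\varphi_1)_0$ while $Z([-1])_0=\emptyset$, so it has two distinct minimal carriers. Correspondingly your representable identity fails, since $|Z([1])_0|=1$ but $\sum_k\binom{2}{k+1}|\breve Z([k])_0|=0+2+0=2$, and so does the conclusion: for $X=\partial\Gamma_+[2]$ one finds $|(X\widetilde{\vartriangleright}Z)_0|=1$ whereas $\bigl((|X|\cdot\text{bin}^{-1})\cdot|Z|\bigr)_0=1\cdot 0-3\cdot 1+3\cdot 1=0$. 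What your bijection $\rho$ really requires is that the set of carriers of each simplex be closed under intersection of faces, i.e.\ $\text{im}\,Z(\sigma)_q\cap\text{im}\,Z(\tau)_q=\text{im}\,Z(\sigma\wedge\tau)_q$; this is strictly stronger than injectivity of each $Z(\varphi)$ taken separately, and must either be added as a hypothesis or verified for the co-semi-simplicial objects actually used ($\Gamma_+[-]$, $\text{Cil}$, $\text{Cil}_0$, $\text{Cil}_2$, $\text{Sd}$), where it does hold. Once that property is granted, the rest of your argument --- descent of $\rho$ along the generating relations of the colimit, injectivity via canonical forms, evaluation on representables to get $|Z|=\text{bin}\cdot\breve Z$, and the column-finiteness argument for the extension --- goes through.
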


\section{Cylinders and barycentric subdivisions}

\subsection{Cylinders of an augmented semi-simplicial set}\label{geometriccylinders}

\subsubsection{The standard cylinder}


Given $\sigma \in {\Gamma_+}[n]_p$ and $\tau \in {\Gamma_+}[n]_q$, we will write $\sigma \preccurlyeq\tau $ whenever the following two conditions hold:
\begin{enumerate}
\item[(i)] $|\sigma([p]) \cap \tau([q])| \leq 1 $

\item[(ii)] $\sigma(i) \leq \tau (j)$, for all $ i\in  [p]$ and $j\in [q]$
\end{enumerate}
Observe that if $\sigma \in {\Gamma_+}[n]_{-1}$ or $\tau \in {\Gamma_+}[n]_{-1}$, then $\sigma \preccurlyeq\tau $.

For each object $[n] \in {\mathbf{\Gamma}_+}$ we define the \emph{(standard) cylinder of $\Gamma_+[n]$}, denoted by $\text{Cil}{\Gamma_+}[n]$, as the following augmented semi-simplicial set. If $m\in \mathbf{\Gamma_+}$ we consider two possibilities: $m\geq 0$ and $m=-1.$ Now, if $m\geq 0$ we take
$$\text{Cil}{\Gamma_+}[n]_m:=\{ (\sigma, \tau)\in {\Gamma_+}[n]_p\times {\Gamma_+}[n]_q\hspace{2pt}:\hspace{2pt}p,q \geq 0,\hspace{2pt}p+q=m-1\hspace{2pt} \mbox{and}\hspace{2pt} \sigma  \preccurlyeq \tau \}$$

On the other hand, if $m=-1$ we take $\text{Cil}{\Gamma_+}[n]_{-1}=\{(\emptyset ,\emptyset)\}.$
Here, $\emptyset$ denotes the unique element $\emptyset=[-1] \to [n]$ belonging to $\Gamma_+[n]_{-1}$.
\begin{obs}
With this definition observe that $\text{Cil}{\Gamma_+}[-1]_{-1}=\{(\emptyset,\emptyset)\}=\{*\}$ and $\text{Cil}{\Gamma_+}[-1]_{m}=\emptyset $, for all $m\geq 0.$
\end{obs}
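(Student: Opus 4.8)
The plan is simply to unwind the defining formula for $\text{Cil}\Gamma_+[n]_m$ in the special case $n=-1$, splitting into the two cases $m=-1$ and $m\geq 0$ exactly as the definition does. The only ingredient I need is the description of the representable presheaf in each degree, namely $\Gamma_+[-1]_p=\mbox{Hom}_{\mathbf{\Gamma}_+}([p],[-1])$.

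For $m=-1$ there is nothing to compute: the definition sets $\text{Cil}\Gamma_+[-1]_{-1}=\{(\emptyset,\emptyset)\}$, where $\emptyset$ denotes the unique element $[-1]\to[-1]$ of $\Gamma_+[-1]_{-1}$. As this is a singleton, I would just identify it with the one-point set $\{*\}$. For $m\geq 0$ the key observation is that $\Gamma_+[-1]_p=\mbox{Hom}_{\mathbf{\Gamma}_+}([p],[-1])=\emptyset$ for every $p\geq 0$, since there is no strictly increasing (indeed, no) map from the non-empty ordered set $[p]=\{0<\cdots<p\}$ into $[-1]=\emptyset$. The defining disjoint union for $\text{Cil}\Gamma_+[-1]_m$ ranges only over pairs $(p,q)$ with $p,q\geq 0$ and $p+q=m-1$; for each such admissible pair the factor $\Gamma_+[-1]_p$ is already empty, so the product $\Gamma_+[-1]_p\times\Gamma_+[-1]_q$ is empty and the relation $\preccurlyeq$ has nothing left to constrain. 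Hence $\text{Cil}\Gamma_+[-1]_m=\emptyset$ for all $m\geq 0$.

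I do not expect any genuine obstacle, as the statement is a direct consequence of the definition together with the fact that $[-1]=\emptyset$ admits no morphism from a non-empty $[p]$. The only subtlety worth flagging is that the summation index in the $m\geq 0$ branch is restricted to $p,q\geq 0$ and excludes the value $-1$; this is precisely what makes the $m=-1$ convention the sole source of a non-empty simplex set. For the degenerate value $m=0$ one may note, in addition, that there are simply no admissible pairs $(p,q)$ with $p+q=-1$ and $p,q\geq 0$, so the union is empty on those grounds as well.
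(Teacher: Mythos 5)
Your argument is correct and is exactly the direct unwinding of the definition that the paper intends for this remark (which it states without proof): the $m=-1$ case is the convention $\text{Cil}\Gamma_+[-1]_{-1}=\{(\emptyset,\emptyset)\}$, and for $m\geq 0$ either there are no admissible pairs $(p,q)$ with $p,q\geq 0$ and $p+q=m-1$ (when $m=0$) or every factor $\Gamma_+[-1]_p=\mbox{Hom}_{\mathbf{\Gamma}_+}([p],[-1])$ with $p\geq 0$ is empty. Nothing is missing.
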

Now, for any $m\geq 0$, $(\sigma ,\tau )\in  \text{Cil}{\Gamma_+}[n]_m$ and $0\leq i\leq m$, the face operator $d_i:\text{Cil}{\Gamma_+}[n]_m\rightarrow \text{Cil}{\Gamma_+}[n]_{m-1}$ is given by
$$
d_i((\sigma,\tau)) =
\left\{
\begin{array}{lll}
 (d_i(\sigma), \tau) & \mbox{if $0\leq i \leq p $},\\ [1pc]
(\sigma, d_{i-p-1}(\tau))  & \mbox{ if $p+1 \leq i \leq m$}.
\end{array} \right.
$$
It easy to check that we have an induced augmented semi-simplicial set $$\text{Cil}{\Gamma_+}[n] \colon {\mathbf{\Gamma_+^{op}}} \to \Set$$

\textbf{Notation:}
For the sake of conciseness and clarity we will consider the following notation. For $m\geq 0$, since any strictly increasing map $\sigma \colon [m]\to [n]$ is given by $0\leq \sigma(0)<\sigma(1)< \cdots < \sigma(m) \leq n$, then we may write $\sigma $ as the tuple $\sigma=(\sigma(0),\sigma(1), \cdots, \sigma(m))$.
As far as a pair $(\sigma, \tau)$ is concerned, it will be denoted according to the following possibilities:
\begin{itemize}
\item If $\sigma=(\sigma(0),\sigma(1), \cdots, \sigma(p))$ and $\tau=(\tau(0),\tau(1), \cdots, \tau(q))$, then we will write the pair $(\sigma, \tau)$ as
$(\sigma(0),\sigma(1), \cdots, \sigma(p),\tau(0)\sp{\prime},\tau(1)\sp{\prime}, \cdots, \tau(q)\sp{\prime}).$

\item If $\sigma\not = \emptyset$ and $\tau=\emptyset$, then $(\sigma , \emptyset)$ is reduced to $(\sigma(0),\sigma(1), \cdots, \sigma(p))$. Similarly,
if $\sigma= \emptyset$ and $\tau\not =\emptyset$, then $(\emptyset,\tau )$ is reduced to $(\tau(0)\sp{\prime},\tau(1)\sp{\prime}, \cdots, \tau(q)\sp{\prime})$.

\item Finally, if $\sigma= \emptyset$ and $\tau =\emptyset$, we use the notation $(\emptyset,\emptyset)=*$
\end{itemize}

\begin{ej}
Using the notation described above, we always have $${\rm{Cil}}{\Gamma_+}[n]_{-1}=\{*\}(=\{(\emptyset, \emptyset)\})$$ \noindent for all $n$.
Next we describe ${\rm{Cil}}{\Gamma_+}[0]$, ${\rm{Cil}}{\Gamma_+}[1]$ and ${\rm{Cil}}{\Gamma_+}[2]$:
\begin{enumerate}
\item
For $n=0$ we have
${\rm{Cil}}{\Gamma_+}[0]_{-1}=\{*\}$ and ${\rm{Cil}}{\Gamma_+}[0]_{0}=\{((0), \emptyset), ( \emptyset, (0))\}=\{0, 0\sp{\prime}\}$.
Now, if $m=1$, then $p$ and $q$ must satisfy $p+q=0$ and we have $(p,q)\in \{(-1,1), (1,-1), (0,0)\}$. Since  ${\Gamma_+}[0]_{1}$ is the empty set we obtain
${\rm{Cil}}{\Gamma_+}[0]_{1}=\{((0), (0))\}=\{(0, 0\sp{\prime})\}$. Moreover, ${\rm{Cil}}{\Gamma_+}[0]_{m}=\emptyset ,$ for all $m\geq 2.$

\item For $n=1$ we have

${\rm{Cil}}{\Gamma_+}[1]_{-1}=\{*\}$

${\rm{Cil}}{\Gamma_+}[1]_{0}=\{((0), \emptyset) , ((1), \emptyset), (\emptyset, (0)),(\emptyset, (1)) \}=\{0,1, 0\sp{\prime},1\sp{\prime}\}$

${\rm{Cil}}{\Gamma_+}[1]_{1}=\{( (0,1), \emptyset ),(\emptyset, (0,1)), ((0), (1)), ((0), (0)), ((1), (1)) \}\\=\{(0,1), (0\sp{\prime},1\sp{\prime}), (0,1\sp{\prime}), (0,0\sp{\prime}), (1,1\sp{\prime})\}$

If $m=2$, then $p$ and $q$ must satisfy $p+q=1$ so $(p,q)\in \{(-1,2), (0,1), (1,0)\}$. Since ${\Gamma_+}[1]_{2}$ is the empty set we obtain

 ${\rm{Cil}}{\Gamma_+}[1]_{2}=\{(0,(0,1)), ((0,1), 1)\}=\{(0,0\sp{\prime},1\sp{\prime}), (0,1,1\sp{\prime})\}$

If $m=3$, then $p$ and $q$ must satisfy $p+q=2$ so $(p,q)\in \{(-1,3), (0,2), (1,1)\}$. Note that ${\Gamma_+}[1]_{3}$, ${\Gamma_+}[1]_{2}$ are empty sets. For $(p,q)=(1,1)$, we have $\sigma=(0,1)=\tau$ and $\sigma \not
\preccurlyeq\tau$ in this case. This implies that  ${\rm{Cil}}{\Gamma_+}[1]_{3}= \emptyset$; moreover, ${\rm{Cil}}{\Gamma_+}[1]_{m}= \emptyset$, for all $m\geq 3.$

\item For $n=2$ (see Figure \ref{cilindro2}) we have the following sets:
\footnotesize{

${\rm{Cil}}{\Gamma_+}[2]_{-1}=\{*\}$

${\rm{Cil}}{\Gamma_+}[2]_{0}=\{0,1, 2, 0\sp{\prime},1\sp{\prime}, 2\sp{\prime}\}$

${\rm{Cil}}{\Gamma_+}[2]_{1}=\{(0,1),(0,2), (0,0\sp{\prime}),(0,1\sp{\prime}), (0,2\sp{\prime}), (1,2), (1,1\sp{\prime}),  (1,2\sp{\prime}),  (2,2\sp{\prime}), (0\sp{\prime},1\sp{\prime}), (0\sp{\prime},2\sp{\prime}),(1\sp{\prime},2\sp{\prime}) \}$

${\rm{Cil}}{\Gamma_+}[2]_{2}=\{(0,1,2),  (0,1,1\sp{\prime}), (0,1,2\sp{\prime}), (0,2,2\sp{\prime}),  (0,0\sp{\prime},1\sp{\prime}), (0,0\sp{\prime},2), (0,1\sp{\prime},2\sp{\prime}), (1,2,2\sp{\prime}),(1,1\sp{\prime},2\sp{\prime}), (0\sp{\prime},1\sp{\prime},2\sp{\prime})  \}$

${\rm{Cil}}{\Gamma_+}[2]_{3}=\{(0,1,2,2\sp{\prime}), (0,1,1\sp{\prime},2\sp{\prime}),  (0,0\sp{\prime},1\sp{\prime},2\sp{\prime})\}$

${\rm{Cil}}{\Gamma_+}[2]_{m}=\emptyset$, $m \geq 4$.}
\end{enumerate}
\end{ej}

\begin{figure}[htbp]
\begin{center}
\includegraphics[scale=0.7]{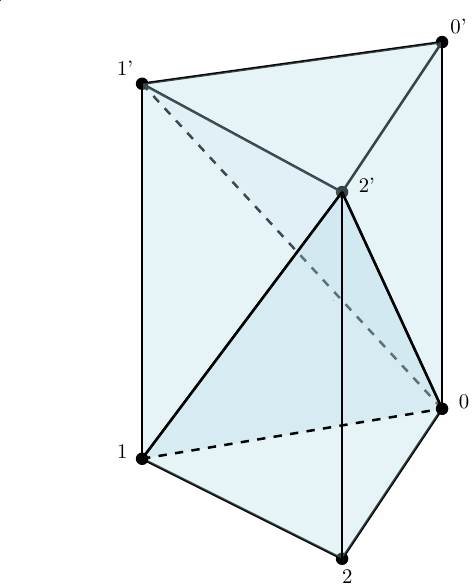}
\caption{The cylinder of $\Gamma_+[2]$}
\label{cilindro2}
\end{center}
\end{figure}

\bigskip
One can straightforwardly check that we have an induced functor $\text{Cil} \colon {\mathbf{\Gamma_+}} \to \Set ^{\mathbf{\Gamma}_+^{op}}, \text{Cil} ([n])= \text{Cil}{\Gamma_+}[n]$.  By Theorem \ref{ext-pr} it is obtained
the colimit-preserving functor $(-)\widetilde{\vartriangleright} \text{Cil} :\mathbf{Sets}^{\mathbf{\Gamma}_+^{op}}\to \mathbf{Sets}^{\mathbf{\Gamma}_+^{op}}$ making commutative the diagram
$$
\xymatrix{ \ar@{^(->}[d]_{\Yoneda} {\mathbf{\Gamma_+}} \ar[r]^{\text{Cil}} & \mathbf{Sets}^{\mathbf{\Gamma}_+^{op}}\\
\mathbf{Sets}^{\mathbf{\Gamma}_+^{op}}\ar[ru]_{(-)\widetilde{\vartriangleright} \text{Cil} } & }, $$

\begin{defi}
For any augmented semi-simplicial set $X \in \mathbf{Sets}^{\mathbf{\Gamma}_+^{op}}$ we define its \emph{(standard) cylinder} as the augmented semi-simplicial set
$$\text{Cil}(X):= X \widetilde{\vartriangleright} \text{Cil}$$ Similarly for augmented semi-simplicial maps, $\text{Cil}(f):= f \widetilde{\vartriangleright} \text{Cil}$.
\end{defi}

The functor $\text{Cil}$ has a right adjoint
$$\text{r}_{\text{Cil}} \colon \Set^{{\mathbf{\Gamma}_+^{op}}} \to \Set^{{\mathbf{\Gamma}_+^{op}}}.$$

If we have an augmented simplicial subset $X \subset \Gamma_+[n]$, then we can take $\text{Cil}X$ as
$\text{Cil}X_m=\{(\sigma, \tau) \in \text{Cil}{\Gamma_+}[n]_m\hspace{2pt}|\hspace{2pt}\mbox{there is}\hspace{2pt}\gamma \in X_m\hspace{2pt}\text{with }\hspace{2pt}(\mbox{Im}(\sigma )\cup \mbox{Im}(\tau )) \subset \mbox{Im}(\gamma )\}.$

Therefore, we can define inductively the cylinder $\text{Cil}{\Gamma_+}[n]$ as follows. Suppose that  $\sigma \colon  [m] \to [n]$ is given by $\sigma(0)<\sigma(1)< \cdots < \sigma(m)$. Then:

\begin{itemize}
\item For each $0\leq p\leq m$, we have the  $(m+1)$-simplex

$(\sigma(0),0)<(\sigma(1),0)< \cdots < (\sigma(p),0)< (\sigma(p),1)< \cdots <(\sigma(m),1)=\\(\sigma(0), \cdots , \sigma(p), \sigma(p)',\cdots ,\sigma(m)')$

\item and for each $-1 \leq q \leq m+1$ we have the $m$-simplex

$(\sigma(0),0)<(\sigma(1),0)< \cdots < (\sigma(q),0)< (\sigma(q+1),1)< \cdots <(\sigma(m),1)=\\(\sigma(0), \cdots , \sigma(q), \sigma(q+1)',\cdots ,\sigma(m)')$, where

for $q=-1$,
$(\sigma(0),1)<(\sigma(1),1)< \cdots <(\sigma(m),1)=(\sigma'(0) ,\cdots ,\sigma(m)')$,

and for $q=m+1$,
$(\sigma(0),0)<(\sigma(1),0)< \cdots <(\sigma(m),0)=(\sigma(0) ,\cdots ,\sigma(m))$.

\end{itemize}

\begin{obs} When studying simplicial theory, the standard way to triangulate the product prism $\Delta[p] \times\Delta[1]$ is by taking the $(p + 1$)-simplices $(0, \cdots , k, k', \cdots , p')$, where the numbers without the prime symbol represent vertices in $\Delta[p] \times \{0\}$ and the  numbers with the prime symbol represent vertices in $\Delta[p] \times \{1\}$. The simplex $(0,\cdots,k,k',\cdots,p')$ corresponds to $k+1$ zeros and $p-k+1$ ones.
\end{obs}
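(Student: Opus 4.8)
The plan is to recognize the prism as the nerve of a product poset and then classify its nondegenerate top-dimensional simplices. Writing $[p]$ for the poset $0<1<\cdots<p$, we have $\Delta[p]=N[p]$ and $\Delta[1]=N[1]$, and since the nerve functor $N$ preserves products (directly, as $N(C\times D)_n=\mathrm{Fun}([n],C\times D)=N(C)_n\times N(D)_n$), we get $\Delta[p]\times\Delta[1]\cong N([p]\times[1])$, where $[p]\times[1]$ carries the product order $(i,\epsilon)\leq(i',\epsilon')$ iff $i\leq i'$ and $\epsilon\leq\epsilon'$. Because a nondegenerate $r$-simplex of the nerve of a poset is precisely a strictly increasing chain $x_0<\cdots<x_r$, determining the top simplices amounts to finding the maximal chains of $[p]\times[1]$.

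I would next observe that $[p]\times[1]$ is graded by $\mathrm{rank}(i,\epsilon)=i+\epsilon$, running from $0$ at the bottom $(0,0)$ to $p+1$ at the top $(p,1)$; hence every maximal chain has $p+2$ elements and so represents a $(p+1)$-simplex. Each such chain starts at $(0,0)$, ends at $(p,1)$, and at every step increases exactly one coordinate by one, so it is uniquely determined by the single index $k$ at which the second coordinate jumps from $0$ to $1$. This produces exactly the $p+1$ chains
$$(0,0)<(1,0)<\cdots<(k,0)<(k,1)<(k+1,1)<\cdots<(p,1),\qquad k=0,1,\dots,p.$$
Recording the level-$0$ vertices without primes and the level-$1$ vertices with primes turns such a chain into the tuple $(0,\dots,k,k',\dots,p')$; its unprimed block $0,\dots,k$ has $k+1$ entries and its primed block $k',\dots,p'$ has $p-k+1$ entries, which is exactly the claimed count of ``zeros'' and ``ones'', with total number of vertices $(k+1)+(p-k+1)=p+2$.

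It then remains to argue that these $p+1$ simplices \emph{triangulate} the prism. Since the poset is a product of two chains, all maximal chains have the same length, so the order complex $|N([p]\times[1])|$ is a pure $(p+1)$-dimensional simplicial complex whose facets are exactly the chains above; and because geometric realization preserves finite products, $|N([p]\times[1])|\cong|\Delta[p]|\times|\Delta[1]|=\Delta^p\times\Delta^1$, so the facets indeed triangulate the prism. As an alternative that bypasses the realization step, one may cut out the facet attached to $k$ by the explicit inequalities defining the $k$-th staircase cell of $\Delta^p\times[0,1]$ and check directly that these cells cover $\Delta^p\times[0,1]$ with pairwise disjoint interiors; they are precisely the images of the $(p,1)$-shuffles in the Eilenberg--Zilber decomposition of a product.

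The only step that is not pure bookkeeping is this covering-and-disjointness claim, which I expect to be the main (though entirely classical) obstacle: everything before it is a routine enumeration of chains, while the triangulation itself rests on the external fact that the realization of a simplicial set is assembled from its nondegenerate simplices, together with the compatibility of realization and finite products. I would therefore either cite these two standard facts or, if a self-contained argument is wanted, verify the staircase-cell inequalities by hand.
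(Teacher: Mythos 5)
Your argument is correct, and it is the standard one: identify $\Delta[p]\times\Delta[1]$ with the nerve of the product poset $[p]\times[1]$, enumerate its maximal chains as the $p+1$ staircase chains indexed by the jump position $k$, and conclude the triangulation from the compatibility of geometric realization with finite products (equivalently, from the $(p,1)$-shuffles of Eilenberg--Zilber). Note, however, that the paper offers no proof of this statement at all: it is a Remark recording classical background on prism triangulations, used only to motivate the combinatorial definition of $\mathrm{Cil}\,\Gamma_+[n]$, so there is no argument in the paper to compare yours against. Your enumeration and the count of $k+1$ unprimed and $p-k+1$ primed vertices match the paper's statement exactly, and you correctly isolate the only non-formal input (that the realized facets cover the prism with disjoint interiors, or equivalently that realization preserves finite products), which is indeed the step one would either cite or verify by the explicit staircase inequalities.
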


\bigskip
Take $\partial \Gamma_+[n]$, the boundary of $\Gamma_+[n]$, and $\breve{\text{Cil}}{\Gamma_+}[n]:={\text{Cil}}{\Gamma_+}[n] \setminus \text{Cil}\partial \Gamma_+[n]$. One can find a pretty straightforward pattern for the cardinal sequences $|\breve{\text{Cil}}{\Gamma_+}[n]|$:
$$
\begin{array}{l |l ccccccccc}
 &-1& 0 & 1 & 2 & 3 & 4 & 5 & 6 & \dots \\\hline
|\breve{\text{Cil}}{\Gamma_+}[-1]|&1& 0 & 0 & 0 & 0 & 0 & 0 & 0 & \dots&  \\
|\breve{\text{Cil}}{\Gamma_+}[0]|&0& 2 & 1 & 0 & 0 & 0 & 0 & 0 &\dots& \\
|\breve{\text{Cil}}{\Gamma_+}[1]|&0& 0 & 3 & 2 & 0 & 0 & 0 & 0 &\dots&  \\
|\breve{\text{Cil}}{\Gamma_+}[2]|&0& 0 & 0 &4  & 3 & 0 &  0& 0 &\dots&  \\
|\breve{\text{Cil}}{\Gamma_+}[3]|&0& 0 &  0& 0 & 5 &  4& 0 &0  & \dots& \\
|\breve{\text{Cil}}{\Gamma_+}[4]|&0& 0 & 0 &0 &0 & 6 & 5 & 0 & \dots& \\
|\breve{\text{Cil}}{\Gamma_+}[5]|&0&  0& 0 &  0& 0 &0 & 7&  6&\dots&  \\
|\breve{\text{Cil}}{\Gamma_+}[6]|&0& 0 & 0 & 0 &  0& 0 & 0 &  8& \dots& \\
~~~~~~~~~~\vdots &\vdots & \vdots  &\vdots  & \vdots & \vdots  &\vdots  & \vdots  & \vdots  & \ddots  &  \\

\end{array}
$$
This permits us the computation of the cardinal sequences $|\text{Cil}\partial{\Gamma_+}[n]|$ and $|\text{Cil}{\Gamma_+}[n]|$.

\begin{ej} For instance, for $n=2$ we have (see proposition below):
$$|{\rm{Cil}}\partial{\Gamma_+}[2]|= |\breve{{\rm{Cil}}}{\Gamma_+}[-1]|+ \binom{2+1}{0+1} |\breve{{\rm{Cil}}}{\Gamma_+}[0]|+ \binom{2+1}{1+1} |\breve{{\rm{Cil}}}{\Gamma_+}[1]|.$$
As
$|\breve{{\rm{Cil}}}{\Gamma_+}[-1]|=(1, 0 , 0 , 0 , 0 , 0 , 0 , 0 , \cdots)$, $|\breve{{\rm{Cil}}}{\Gamma_+}[0]|=(0, 2 , 1 , 0 , 0 , 0 , 0 , 0 ,\cdots)$  and $|\breve{{\rm{Cil}}}{\Gamma_+}[1]|=(0, 0 , 3 , 2 , 0 , 0 , 0 , 0 ,\cdots),$ then
$$|{\rm{Cil}}\partial{\Gamma_+}[2]|=(1, 6 , 12 , 6 , 0 , 0 , 0 , 0 , \cdots).$$
Moreover,
$$\begin{array}{rl}
|{\rm{Cil}}{\Gamma_+}[2]|= & |\breve{{\rm{Cil}}}{\Gamma_+}[-1]|+ \binom{2+1}{0+1} |\breve{{\rm{Cil}}}{\Gamma_+}[0]|+ \binom{2+1}{1+1} |\breve{{\rm{Cil}}}{\Gamma_+}[1]|+\binom{2+1}{2+1} |\breve{{\rm{Cil}}}{\Gamma_+}[2]| \\
=&|{\rm{Cil}}\partial{\Gamma_+}[2]|+|\breve{{\rm{Cil}}}{\Gamma_+}[2]|.
\end{array}$$
As $|{\rm{Cil}}\partial{\Gamma_+}[2]|=(1, 6 , 12 , 6 , 0 , 0 , 0 , 0 , \cdots)$ and  $|\breve{{\rm{Cil}}}{\Gamma_+}[2]|=(0,0,0,4,3,0, \cdots)$, then
$$|{\rm{Cil}}{\Gamma_+}[2]|=(1, 6 , 12 , 10 , 3 , 0 , 0 , 0 , \cdots).$$
\end{ej}

The simple algorithm above has a proper generalization:
\begin{prop}\label{sucesioncardinalCil} For every $n \in \mathbb{N}_+$, we have:

\begin{itemize}

\item[(i)] 
$$
|\breve{\text{Cil}}{\Gamma_+}[n]|_k=
\left\{
\begin{array}{lll}
n+2, & \mbox{if $k=n $},\\ [1pc]
n+1, & \mbox{ if $k=n+1$}\\ [1pc]
0, & \mbox{ if $k\not \in \{n, n+1\}$}
\end{array} \right.
$$

\item[(ii)] $|\text{Cil}\partial{\Gamma_+}[n]|=\sum _{i=-1}^{n-1}\binom{n+1}{i+1} |\breve{\text{Cil}}{\Gamma_+}[i]|,$

\item[(iii)]  $|\text{Cil}{\Gamma_+}[n]|=|\text{Cil}\partial{\Gamma_+}[n]|+|\breve{\text{Cil}}{\Gamma_+}[n]|.$

\end{itemize}
\end{prop}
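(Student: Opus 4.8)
The plan is to dispatch (iii) first, since it is purely formal, and then to extract from the subobject description of $\text{Cil}$ a clean combinatorial characterization of $\breve{\text{Cil}}$ that drives both (i) and (ii). For (iii): by definition $\breve{\text{Cil}}{\Gamma_+}[n]:={\text{Cil}}{\Gamma_+}[n]\setminus\text{Cil}\partial{\Gamma_+}[n]$, so in each dimension $m$ the set $\text{Cil}{\Gamma_+}[n]_m$ is the disjoint union of $\text{Cil}\partial{\Gamma_+}[n]_m$ and $\breve{\text{Cil}}{\Gamma_+}[n]_m$, and the cardinalities add. The characterization I would prove next is that $(\sigma,\tau)\in\breve{\text{Cil}}{\Gamma_+}[n]$ if and only if $\sigma\preccurlyeq\tau$ and $\mathrm{Im}(\sigma)\cup\mathrm{Im}(\tau)=\{0,\dots,n\}$. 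Indeed, the displayed formula for $\text{Cil}X$ with $X=\partial{\Gamma_+}[n]$ shows that $(\sigma,\tau)\in\text{Cil}\partial{\Gamma_+}[n]$ exactly when $\mathrm{Im}(\sigma)\cup\mathrm{Im}(\tau)$ lies in some proper face, i.e. is a proper subset of $\{0,\dots,n\}$; taking complements inside $\text{Cil}{\Gamma_+}[n]$ gives the full-support condition.

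For (i), write $\sigma\in{\Gamma_+}[n]_p$, $\tau\in{\Gamma_+}[n]_q$ with $m=p+q+1$, so that $|\mathrm{Im}(\sigma)|+|\mathrm{Im}(\tau)|=m+1$. Condition (i) of $\preccurlyeq$ forces $|\mathrm{Im}(\sigma)\cap\mathrm{Im}(\tau)|\in\{0,1\}$, hence the full-support requirement $|\mathrm{Im}(\sigma)\cup\mathrm{Im}(\tau)|=n+1$ can only occur for $m=n$ (disjoint images) or $m=n+1$ (images meeting in a single point); this already yields the vanishing for $k\notin\{n,n+1\}$. Condition (ii) of $\preccurlyeq$ says every element of $\mathrm{Im}(\sigma)$ precedes every element of $\mathrm{Im}(\tau)$, so in the disjoint case the two images are forced to be the initial and final segments $\{0,\dots,k\}$ and $\{k+1,\dots,n\}$, one for each split $k\in\{-1,\dots,n\}$, giving $n+2$ pairs; in the overlapping case the common point is $\max\mathrm{Im}(\sigma)=\min\mathrm{Im}(\tau)=k$, forcing $\{0,\dots,k\}$ and $\{k,\dots,n\}$ for $k\in\{0,\dots,n\}$, giving $n+1$ pairs.

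For (ii), I would build a dimension-wise bijection distributing $\text{Cil}\partial{\Gamma_+}[n]$ over the proper faces. Send each $(\sigma,\tau)\in\text{Cil}\partial{\Gamma_+}[n]_m$ to its support $S:=\mathrm{Im}(\sigma)\cup\mathrm{Im}(\tau)\subsetneq\{0,\dots,n\}$; if $|S|=i+1$, then $S$ is the image of a unique $i$-dimensional face, and the order isomorphism $S\cong[i]$ transports $(\sigma,\tau)$ to a pair in $\text{Cil}{\Gamma_+}[i]_m$ with full support, i.e. an element of $\breve{\text{Cil}}{\Gamma_+}[i]_m$. Running over the $\binom{n+1}{i+1}$ faces of each dimension $i$ from $-1$ to $n-1$ yields a bijection $\text{Cil}\partial{\Gamma_+}[n]_m\cong\bigsqcup_{i=-1}^{n-1}\bigsqcup_{|S|=i+1}\breve{\text{Cil}}{\Gamma_+}[i]_m$, and passing to cardinalities gives the claimed sum. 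The main obstacle is precisely the verification underlying this last bijection: one must check that the order isomorphism $S\cong[i]$ preserves both the relation $\preccurlyeq$ and the face operators, that surjectivity holds because a full-support pair on a face automatically satisfies $\sigma\preccurlyeq\tau$ after the inclusion into $\Gamma_+[n]$, and that injectivity holds because the minimal face containing $(\sigma,\tau)$ --- equivalently its support $S$ --- is recovered from the pair. Everything else reduces to the routine segment counts of (i).
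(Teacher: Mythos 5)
Your proof is correct. The paper in fact states Proposition \ref{sucesioncardinalCil} without proof (it only remarks that ``one can find a pretty straightforward pattern''), so there is no official argument to compare against; but the only proof it does give in this family --- the one for $\breve{\text{Cil}}_2{\Gamma_+}[n]$ --- rests on exactly your key observation, namely that the simplices of $\breve{\text{Cil}}{\Gamma_+}[n]$ are the pairs $(\sigma,\tau)$ with $\mathrm{Im}(\sigma)\cup\mathrm{Im}(\tau)=[n]$, and your segment count for (i) and face-distribution bijection for (ii) are the natural completion of that idea.
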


\bigskip
Now, using the result above, we are able to compute the cardinal sequences $|\text{Cil}\partial{\Gamma_+}[n]|$ and
$|\text{Cil}{\Gamma_+}[n]|$. Here, we display the following tables:

$$
\begin{array}{l |l ccccccccc}
 &-1& 0 & 1 & 2 & 3 & 4 & 5 & 6 & \dots \\\hline
|\text{Cil}\partial{\Gamma_+}[-1]|&0& 0 & 0 & 0 & 0 & 0 & 0 & 0 & \dots&  \\
|\text{Cil}\partial{\Gamma_+}[0]|&1& 0 & 0 & 0 & 0 & 0 & 0 & 0 &\dots& \\
|\text{Cil}\partial{\Gamma_+}[1]|&1& 4 & 2 & 0 & 0 & 0 & 0 & 0 &\dots&  \\
|\text{Cil}\partial{\Gamma_+}[2]|&1& 6 & 12 &6  & 0 & 0 &  0& 0 &\dots&  \\
|\text{Cil}\partial{\Gamma_+}[3]|&1& 8 &22  & 28 & 12 & 0 & 0 & 0& \dots& \\
|\text{Cil}\partial{\Gamma_+}[4]|&1& 10 & 35 & 60 & 55 & 20 &  0& 0 & \dots& \\
|\text{Cil}\partial{\Gamma_+}[5]|&1& 12 &51  & 110 & 135 & 96&30 &  0&\dots&  \\
|\text{Cil}\partial{\Gamma_+}[6]|&1& 14 & 70 & 182 & 280 & 266 &154  & 42 & \dots& \\
~~~~~~~~~~\vdots &\vdots & \vdots  &\vdots  & \vdots & \vdots  &\vdots  & \vdots  & \vdots  & \ddots  &\\

\end{array}
$$

$$
\begin{array}{l |l cccccccccc}
 &-1& 0 & 1 & 2 & 3 & 4 & 5 & 6 &0 &\dots \\\hline
|\text{Cil}{\Gamma_+}[-1]|&1& 0 & 0 & 0 & 0 & 0 & 0 & 0 &0 &\ \dots&  \\
|\text{Cil}{\Gamma_+}[0]|&1& 2 & 1 & 0 & 0 & 0 & 0 & 0 &0 &\dots& \\
|\text{Cil}{\Gamma_+}[1]|&1& 4 & 5 & 2 & 0 & 0 & 0 & 0 &0 &\dots&  \\
|\text{Cil}{\Gamma_+}[2]|&1& 6 & 12 &10  & 3 & 0 &  0& 0 &0 &\dots& \\
|\text{Cil}{\Gamma_+}[3]|&1& 8 &22  & 28 & 17 & 4 & 0 & 0 &0 & \dots& \\
|\text{Cil}{\Gamma_+}[4]|&1& 10 & 35 &60 & 55& 26 & 5 & 0 &0 & \dots& \\
|\text{Cil}{\Gamma_+}[5]|&1&12  & 51 &110  &135  &96 & 37& 6 &0 &\dots &  \\
|\text{Cil}{\Gamma_+}[6]|&1& 14  &70  &182  & 200 &  266&  154 &  50&7 & \dots & \\
~~~~~~~~~~\vdots &\vdots & \vdots  &\vdots  & \vdots & \vdots  &\vdots  & \vdots  & \vdots  &\vdots  & \ddots   &\\

\end{array}
$$

\begin{obs}  For $n\geq1$,  $a(n)=|\text{Cil}{\Gamma_+}[n-1]|_1$ is the sequence of pentagonal numbers  A000326, where $a(n) = \frac{n(3n-1)}{2}$ (\url{https://oeis.org/A000326}).

Now, taking $a(n)=|\text{Cil}{\Gamma_+}[n]|_2$ one has the sequence A006331, where $a(n) = n (n+1)\frac{(2n+1)}{3}$		 (\url{https://oeis.org/A006331}).

We also have that $a(n)=|\text{Cil}{\Gamma_+}[n]|_3$ is the sequence A212415, where $a(n) = (n-1)n(n+1)\frac{(5n+2)}{24 }$		 (\url{https://oeis.org/A212415}).
\end{obs}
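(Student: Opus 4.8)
The plan is to reduce everything to a single closed expression for an arbitrary coordinate $|\text{Cil}\,\Gamma_+[n]|_k$ and then specialize to $k=1,2,3$. First I would combine parts (ii) and (iii) of Proposition~\ref{sucesioncardinalCil}: since $\binom{n+1}{n+1}=1$, the extra summand $|\breve{\text{Cil}}\,\Gamma_+[n]|$ appearing in (iii) is exactly the missing $i=n$ term of the sum in (ii), so that
\[
|\text{Cil}\,\Gamma_+[n]| = \sum_{i=-1}^{n} \binom{n+1}{i+1}\, |\breve{\text{Cil}}\,\Gamma_+[i]|.
\]

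Next I would read off the $k$-th coordinate using part (i). The sequence $|\breve{\text{Cil}}\,\Gamma_+[i]|$ is supported only in dimensions $i$ and $i+1$, with respective values $i+2$ and $i+1$. Hence, for a fixed $k$, only the summands $i=k$ (where the value is $k+2$) and $i=k-1$ (where, since $k=(k-1)+1$, the value is $(k-1)+1=k$) survive, yielding the master formula
\[
|\text{Cil}\,\Gamma_+[n]|_k = (k+2)\binom{n+1}{k+1} + k\binom{n+1}{k},
\]
valid for all $k$ under the standing convention $\binom{p}{q}=0$ when $q>p$ (which automatically handles the case $k>n$, where the first binomial vanishes). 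As a sanity check, $k=1,2,3$ with $n=2$ reproduce the entries $12,10,3$ of the tabulated row for $|\text{Cil}\,\Gamma_+[2]|$.

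Finally I would specialize. For the first sequence I substitute $k=1$ and replace $n$ by $n-1$, obtaining $3\binom{n}{2}+n=\tfrac{3n(n-1)}{2}+n=\tfrac{n(3n-1)}{2}$, the pentagonal numbers A000326. For $k=2$ the formula gives $4\binom{n+1}{3}+2\binom{n+1}{2}$, which factors as $n(n+1)\bigl[\tfrac{2(n-1)}{3}+1\bigr]=\tfrac{n(n+1)(2n+1)}{3}$, i.e.\ A006331. For $k=3$ it gives $5\binom{n+1}{4}+3\binom{n+1}{3}=(n-1)n(n+1)\bigl[\tfrac{5(n-2)}{24}+\tfrac12\bigr]=\tfrac{(n-1)n(n+1)(5n+2)}{24}$, matching A212415. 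There is no genuine obstacle here: once the master coordinate formula is in hand, the rest is elementary binomial algebra and the OEIS identifications amount to recognizing the resulting polynomials. The only point demanding care is the bookkeeping of index ranges — in particular the shift $n\mapsto n-1$ in the first identity (so that it is $|\text{Cil}\,\Gamma_+[n-1]|_1$, not $|\text{Cil}\,\Gamma_+[n]|_1$, that is pentagonal) and the vanishing of $\binom{n+1}{k+1}$ for $k=n+1$ — which I would verify directly against the tabulated rows to guard against off-by-one slips.
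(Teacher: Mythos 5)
Your derivation is correct and is essentially the paper's own (implicit) justification: the remark is stated without proof as an observation read off from the tables, which are themselves generated by Proposition~\ref{sucesioncardinalCil}, and your master formula $|\text{Cil}{\Gamma_+}[n]|_k=(k+2)\binom{n+1}{k+1}+k\binom{n+1}{k}$ is exactly the coordinatewise form of combining its parts (i)--(iii), after which the three OEIS identifications are routine binomial algebra. As a bonus, your formula gives $|\text{Cil}{\Gamma_+}[6]|_3=280$, agreeing with A212415 and with $|\text{Cil}\partial{\Gamma_+}[6]|_3=280$, which shows the entry $200$ in the paper's table of $|\text{Cil}{\Gamma_+}[n]|$ to be a typo.
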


\subsubsection{ The 0-cylinder}

Similarly to the case of the standard cylinder, given $\sigma \in {\Gamma_+}[n]_p$ and $\tau \in {\Gamma_+}[n]_q$ we will write $\sigma \prec\tau $
whenever the following two conditions hold:
\begin{enumerate}
\item[(i)] $|\sigma([p]) \cap \tau([q])|=0$

\item[(ii)] $\sigma(i)<\tau (j)$, for all $ i\in  [p]$ and $j\in [q]$
\end{enumerate}

Note that, if $\sigma \in {\Gamma_+}[n]_{-1}$ or $\tau \in {\Gamma_+}[n]_{-1}$, then $\sigma \prec\tau $.

For each object  $[n] \in {\mathbf{\Gamma}_+}$, we introduce $\text{Cil}_0{\Gamma_+}[n]$, the augmented semi-simplicial subset of $\text{Cil}{\Gamma_+}[n]$ defined as:
$$\text{Cil}_0{\Gamma_+}[n]_m= \{ (\sigma, \tau)\in \text{Cil}{\Gamma_+}[n]_m\hspace{2pt}:\hspace{2pt}\sigma \prec \tau \}\subset \text{Cil}{\Gamma_+}[n]_m$$
\noindent for $m\geq 0$ and $\text{Cil}_0{\Gamma_+}[n]_{-1}=\{(\emptyset ,\emptyset)\}=\{*\}.$

We consider the same notation as the one introduced for the standard cylinder.

\begin{ej} Note that  ${\rm{Cil}}_0{\Gamma_+}[-1]_{-1}=\{*\}$ and  ${\rm{Cil}}_0{\Gamma_+}[-1]_m=\emptyset $ for $m\geq 0$.
Next, we describe ${\rm{Cil}}_0{\Gamma_+}[0],$ ${\rm{Cil}}_0{\Gamma_+}[1]$ and  ${\rm{Cil}}_0{\Gamma_+}[2]$:
\begin{enumerate}

\item For $n=0$ we have

${\rm{Cil}}_0{\Gamma_+}[0]_{-1}=\{(\emptyset, \emptyset)\}=\{*\}$,

${\rm{Cil}}_0{\Gamma_+}[0]_{0}=\{( \emptyset, (0)), ((0), \emptyset)\}=\{ 0\sp{\prime}, 0\}.$

If $m=1$, then $p$ and $q$ must satisfy $p+q=0$, and therefore $(p,q)\in \{(-1,1), (1,-1), (0,0)\}$. Since ${\Gamma_+}[0]_{1}$ is the empty set we have

${\rm{Cil}}_0{\Gamma_+}[0]_{1}=\emptyset.$

Moreover, ${\rm{Cil}}_0{\Gamma_+}[0]_{m}=\emptyset$, for all $m\geq 1.$

\item For $n=1$ we have

${\rm{Cil}}_0{\Gamma_+}[1]_{-1}=\{*\}$,

${\rm{Cil}}_0{\Gamma_+}[1]_{0}=\{(\emptyset, (0)),(\emptyset, (1)), ((0), \emptyset) , ((1), \emptyset)  \}=\{ 0\sp{\prime},1\sp{\prime}, 0,1\}$,

${\rm{Cil}}_0{\Gamma_+}[1]_{1}=\{(\emptyset, (0,1)), ((0), (1)), ( (0,1), \emptyset ) \}=\{(0\sp{\prime},1\sp{\prime}), (0,1\sp{\prime}), (0,1)\}$.

If $m=2$, then $p$ and $q$ must satisfy $p+q=1$, so $(p,q)\in \{(-1,2), (0,1), (1,0)\}$. In this case, ${\Gamma_+}[1]_{2}$ is the empty set so

${\rm{Cil}}_0{\Gamma_+}[1]_{2}=\{(0,(0,1)), ((0,1), 1)\}=\emptyset.$

Moreover, ${\rm{Cil}}_0{\Gamma_+}[1]_{m}=\emptyset$, for all $m\geq 2.$

\item For $n=2$ (see Figure \ref{cilindro2}) we have the following sets of simplices which do not contain ``vertical'' 1-simplices:

${\rm{Cil}}_0{\Gamma_+}[2]_{-1}=\{*\}$,

${\rm{Cil}}_0{\Gamma_+}[2]_{0}=\{0\sp{\prime},1\sp{\prime}, 2\sp{\prime}, 0,1, 2\}$,

${\rm{Cil}}_0{\Gamma_+}[2]_{1}=\{ (0\sp{\prime},1\sp{\prime}), (0\sp{\prime},2\sp{\prime}),(1\sp{\prime},2\sp{\prime}),(0,1\sp{\prime}), (0,2\sp{\prime}),   (1,2\sp{\prime}),   (0,1),(0,2), (1,2) \}$,

${\rm{Cil}}_0{\Gamma_+}[2]_{2}=\{     (0\sp{\prime},1\sp{\prime},2\sp{\prime}),   (0,1\sp{\prime},2\sp{\prime}),  (0,1,2\sp{\prime}), (0,1,2)  \}$,

${\rm{Cil}}_0{\Gamma_+}[2]_{m}=\emptyset$, $m \geq 3$.
\end{enumerate}
\end{ej}

\begin{figure}[htbp]
\begin{center}
\includegraphics[scale=0.7]{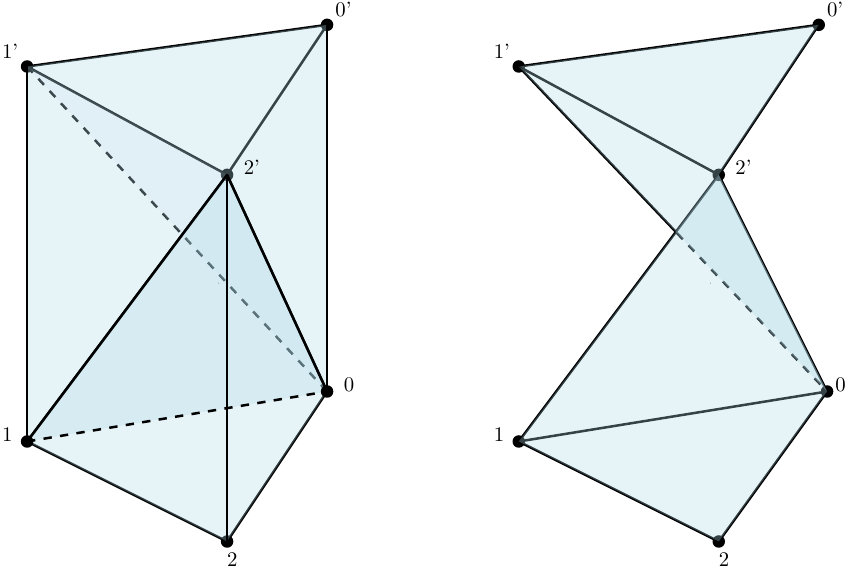}
\caption{The 0-cylinder of $\Gamma_+[2]$ is on the right (it is obtained from the left by removing simplices which contain some vertical 1-simplex)}
\label{cilindro2}
\end{center}
\end{figure}

Again, we have an induced functor $\text{Cil}_0\colon {\mathbf{\Gamma_+}} \to \Set ^{\mathbf{\Gamma}_+^{op}}, \, \text{Cil}_0([n])=\text{Cil}_0\Gamma_+[n]$ and, by Theorem \ref{ext-pr}, one obtains the colimit-preserving functor $(-)\widetilde{\vartriangleright} \text{Cil}_0{\Gamma_+} :\mathbf{Sets}^{\mathbf{\Gamma}_+^{op}}\to \mathbf{Sets}^{\mathbf{\Gamma}_+^{op}}$, making commutative the diagram
$$
\xymatrix{ \ar@{^(->}[d]_{\Yoneda} {\mathbf{\Gamma_+}} \ar[r]^{\text{Cil}_0 } & \mathbf{Sets}^{\mathbf{\Gamma}_+^{op}}\\
\mathbf{Sets}^{\mathbf{\Gamma}_+^{op}}\ar[ru]_{(-)\widetilde{\vartriangleright} \text{Cil}_0} & }, $$

\begin{defi}
For any augmented semi-simplicial set $X \in \mathbf{Sets}^{\mathbf{\Gamma}_+^{op}}$ we define its 0-cylinder as
$$\text{Cil}_0(X):= X \widetilde{\vartriangleright} \text{Cil}_0$$
\noindent and similarly, for augmented semi-simplicial maps $\text{Cil}_0f:= f \widetilde{\vartriangleright} \text{Cil}_0$.
\end{defi}

This functor $\text{Cil}_0$  has a right adjoint $\text{r}_{\text{Cil}_0} \colon \Set^{{\mathbf{\Gamma}_+^{op}}} \to \Set^{{\mathbf{\Gamma}_+^{op}}}.$

Note that, if we have an augmented semi-simplicial subset $X \subset \Gamma_+[n]$, we can take
$\text{Cil}_0X=\{(\sigma, \tau) \in \text{Cil}_0{\Gamma_+}[n]\hspace{2pt}|\hspace{2pt}\text{there is }\hspace{2pt} \gamma \in X_m\hspace{2pt}\text{with}\hspace{2pt}( \text{Im}(\sigma) \cup \text{Im}(\tau))\subset  \text{Im}(\gamma) \}.$

In the same manner as the standard cylinder, denote $\breve{\text{Cil}}_0{\Gamma_+}[n]={\text{Cil}_0}{\Gamma_+}[n] \setminus \text{Cil}_0\partial \Gamma_+[n]$. In this case, we even find a simpler pattern for the cardinal sequences $|\breve{\text{Cil}}_0{\Gamma_+}[n]|$:
$$
\begin{array}{l |l ccccccccc}
 &-1& 0 & 1 & 2 & 3 & 4 & 5 & 6 & \dots \\\hline
|\breve{\text{Cil}}_0{\Gamma_+}[-1]|&1& 0 & 0 & 0 & 0 & 0 & 0 & 0 & \dots&  \\
|\breve{\text{Cil}}_0{\Gamma_+}[0]|&0& 2 & 0 & 0 & 0 & 0 & 0 & 0 &\dots& \\
|\breve{\text{Cil}}_0{\Gamma_+}[1]|&0& 0 & 3 & 0 & 0 & 0 & 0 & 0 &\dots&  \\
|\breve{\text{Cil}}_0{\Gamma_+}[2]|&0& 0 & 0 &4  & 0 & 0 &  0& 0 &\dots&  \\
|\breve{\text{Cil}}_0{\Gamma_+}[3]|&0& 0 &  0& 0 & 5 &  0& 0 &0  & \dots& \\
|\breve{\text{Cil}}_0{\Gamma_+}[4]|&0& 0 & 0 &0 &0 & 6 & 0 & 0 & \dots& \\
|\breve{\text{Cil}}_0{\Gamma_+}[5]|&0&  0& 0 &  0& 0 &0 & 7&  0&\dots&  \\
|\breve{\text{Cil}}_0{\Gamma_+}[6]|&0& 0 & 0 & 0 &  0& 0 & 0 &  8& \dots& \\
~~~~~~~~~~\vdots &\vdots & \vdots  &\vdots  & \vdots & \vdots  &\vdots  & \vdots  & \vdots  & \ddots  &  \\

\end{array}
$$
\noindent which allows us to compute the sequences $|\text{Cil}_0\partial{\Gamma_+}[n]|$ and $|\text{Cil}_0\partial{\Gamma_+}[n]|$.

\begin{prop} \label{Cilcerocal} For every $n \in \mathbb{N}_+$, we have:

\begin{itemize}
\item[(i)] 
$$
|\breve{\text{Cil}}_0{\Gamma_+}[n]|_k=
\left\{
\begin{array}{lll}
n+2, & \mbox{if $k=n $},\\ [1pc]
0, & \mbox{ if $k\not= n$}
\end{array} \right.
$$

\item[(ii)] $|\text{Cil}_0\partial{\Gamma_+}[n]|=\sum _{i=-1}^{n-1}\binom{n+1}{i+1}  |\breve{\text{Cil}}_0{\Gamma_+}[i]|$.

\item[(iii)] $|\text{Cil}_0{\Gamma_+}[n]|=|\text{Cil}_0\partial{\Gamma_+}[n]|+|\breve{\text{Cil}}_0{\Gamma_+}[n]|$.

\end{itemize}
\end{prop}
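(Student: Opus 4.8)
The plan is to classify every simplex of $\text{Cil}_0\Gamma_+[n]$ by a single invariant, its \emph{support}, and thereby reduce all three statements to counting subsets. Since $(\sigma,\tau)$ lies in $\text{Cil}_0\Gamma_+[n]$ precisely when $\sigma\prec\tau$, condition (i) of $\prec$ forces $\text{Im}(\sigma)\cap\text{Im}(\tau)=\emptyset$, so I attach to $(\sigma,\tau)$ its support $S:=\text{Im}(\sigma)\sqcup\text{Im}(\tau)\subseteq[n]$. If $(\sigma,\tau)$ is an $m$-simplex with $\dim\sigma=p$, $\dim\tau=q$ and $p+q=m-1$, then $|S|=(p+1)+(q+1)=m+1$, so the support already records the dimension. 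I would first record the boundary characterization used throughout: a pair lies in $\text{Cil}_0\partial\Gamma_+[j]$ if and only if its support is a \emph{proper} subset of $[j]$, because $\partial\Gamma_+[j]=\Gamma_+[j]\setminus\{\iota_j\}$ contains every simplex except the one of image $[j]$, and by the description of $\text{Cil}_0$ on a subcomplex $X\subset\Gamma_+[j]$ a pair lies in $\text{Cil}_0X$ iff the simplex whose image equals its support belongs to $X$. Hence $\breve{\text{Cil}}_0\Gamma_+[i]$ consists exactly of the full-support simplices of $\text{Cil}_0\Gamma_+[i]$. The key bijection is then: for each $S\subseteq[n]$ with $|S|=i+1$, the order isomorphism $\phi_S\colon[i]\xrightarrow{\cong}S$ induces, by post-composition, a bijection between the simplices of $\text{Cil}_0\Gamma_+[n]$ of support $S$ and those of $\breve{\text{Cil}}_0\Gamma_+[i]$; since $\phi_S$ is strictly increasing, it preserves and reflects both conditions defining $\prec$.

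With this in hand, part (i) is a direct description of the full-support simplices. If $\text{Im}(\sigma)\sqcup\text{Im}(\tau)=[n]$, then condition (ii) of $\prec$ gives $\max\text{Im}(\sigma)<\min\text{Im}(\tau)$, so $[n]$ splits into an initial and a final segment: $\sigma=(0,\dots,p)$ and $\tau=(p+1,\dots,n)$ for some cut $p\in\{-1,0,\dots,n\}$ (the extremes $p=-1$ and $p=n$ giving $\sigma=\emptyset$ and $\tau=\emptyset$ respectively). These $n+2$ pairs are all of dimension $n$, whence $|\breve{\text{Cil}}_0\Gamma_+[n]|_k=n+2$ for $k=n$ and $0$ otherwise.

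For (ii) and (iii) I would sum the bijection over supports. By the boundary characterization, $(\sigma,\tau)\in\text{Cil}_0\partial\Gamma_+[n]$ exactly when its support is a proper subset $S\subsetneq[n]$; grouping by the size $i+1=|S|$ and using that the $(n+1)$-element set $[n]$ has $\binom{n+1}{i+1}$ subsets of size $i+1$, the bijection yields
$$|\text{Cil}_0\partial\Gamma_+[n]|=\sum_{i=-1}^{n-1}\binom{n+1}{i+1}\,|\breve{\text{Cil}}_0\Gamma_+[i]|,$$
which is (ii). Restoring the single top support $S=[n]$, contributing $\binom{n+1}{n+1}=1$ times $\breve{\text{Cil}}_0\Gamma_+[n]$, recovers the full decomposition $|\text{Cil}_0\Gamma_+[n]|=\sum_{i=-1}^{n}\binom{n+1}{i+1}\,|\breve{\text{Cil}}_0\Gamma_+[i]|$, and subtracting the boundary term gives (iii). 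The only delicate point, and the step I would treat most carefully, is the support bijection: one must verify that post-composition with the order embedding $\phi_S$ genuinely preserves and reflects $\prec$ and agrees with the definition of $\text{Cil}_0$ on the face $\Gamma_+[i]\subset\Gamma_+[n]$. Everything else is bookkeeping with binomial coefficients.
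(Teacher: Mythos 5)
Your proof is correct, and it matches the approach the paper intends: the paper actually states Proposition \ref{Cilcerocal} without proof, treating it as routine, but the analogous result for $\text{Cil}_2$ is proved there by exactly your idea of classifying simplices by their support $\text{Im}(\sigma)\cup\text{Im}(\tau)\subseteq[n]$ and counting subsets of each size with binomial coefficients. Your explicit verification of the cut-point description of full-support pairs (giving the $n+2$ in part (i)) and of the order-isomorphism bijection over supports is a sound and complete filling-in of that omitted argument.
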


\bigskip

\begin{ej} For instance, using Proposition \ref{Cilcerocal} parts (i) and (ii), we have for $n=2$ that
$$|{\rm{Cil}}_0\partial{\Gamma_+}[2]|= |\breve{{\rm{Cil}}}_0{\Gamma_+}[-1]|+ 3|\breve{{\rm{Cil}}}_0{\Gamma_+}[0]|+ 3|\breve{{\rm{Cil}}}_0{\Gamma_+}[1]|$$
\noindent and

$|\breve{{\rm{Cil}}}_0{\Gamma_+}[-1]|=(1, 0 , 0 , 0 , 0 , 0 , 0 , 0 , \cdots)$

$|\breve{{\rm{Cil}}}_0{\Gamma_+}[0]|=(0, 2 , 0 , 0 , 0 , 0 , 0 , 0 ,\cdots)$

$|\breve{{\rm{Cil}}}_0{\Gamma_+}[1]|=(0, 0 , 3 , 0, 0 , 0 , 0 , 0 ,\cdots)$

Therefore, $|{\rm{Cil}}_0\partial{\Gamma_+}[2]|=(1, 6 , 9 , 0 , 0 , 0 , 0 , 0 , \cdots)$.

Finally, using again Proposition \ref{Cilcerocal}, and taking into account that the cardinal sequence $|\breve{{\rm{Cil}}}_0{\Gamma_+}[2]|=(0, 0 , 0 , 4, 0 , 0 ,\cdots)$,  we have
$$|{\rm{Cil}}_0{\Gamma_+}[2]|=
|{\rm{Cil}}_0\partial{\Gamma_+}[2]|+|\breve{{\rm{Cil}}}_0{\Gamma_+}[2]|= (1, 6 , 9 , 4 ,0 , 0 , 0 , \cdots).$$
\end{ej}

\bigskip
Using this proposition, we are able to compute the cardinal sequences $|{\rm{Cil}}_0\partial{\Gamma_+}[n]|$ and
$|{\rm{Cil}}_0{\Gamma_+}[n]|$. Here we list the first examples:
$$
\begin{array}{l |l ccccccccc}
 &-1& 0 & 1 & 2 & 3 & 4 & 5 & 6 & \dots \\\hline
|{\rm{Cil}}_0\partial{\Gamma_+}[-1]|&0& 0 & 0 & 0 & 0 & 0 & 0 & 0 & \dots&  \\
|{\rm{Cil}}_0\partial{\Gamma_+}[0]|&1& 0 & 0 & 0 & 0 & 0 & 0 & 0 &\dots& \\
|{\rm{Cil}}_0\partial{\Gamma_+}[1]|&1& 4 & 0 & 0 & 0 & 0 & 0 & 0 &\dots&  \\
|{\rm{Cil}}_0\partial{\Gamma_+}[2]|&1& 6 & 9 &0  & 0 & 0 &  0& 0 &\dots&  \\
|{\rm{Cil}}_0\partial{\Gamma_+}[3]|&1& 8 & 18  & 16 &  0& 0 &  0& 0& \dots& \\
|{\rm{Cil}}_0\partial{\Gamma_+}[4]|&1& 10 & 30 &40 & 25 & 0 & 0 &  0& \dots& \\
|{\rm{Cil}}_0\partial{\Gamma_+}[5]|&1& 12 & 45 & 80 &  75& 36& 0& 0 &\dots&  \\
|{\rm{Cil}}_0\partial{\Gamma_+}[6]|&1& 14 & 63 &140  & 175 & 126 &  49& 0 & \dots& \\
~~~~~~~~~~\vdots &\vdots & \vdots  &\vdots  & \vdots & \vdots  &\vdots  & \vdots  & \vdots  & \ddots  &  \\

\end{array}
$$

$$
\begin{array}{l |l ccccccccc}
 &-1& 0 & 1 & 2 & 3 & 4 & 5 & 6 & \dots \\\hline
|{\rm{Cil}}_0{\Gamma_+}[-1]|&1& 0 & 0 & 0 & 0 & 0 & 0 & 0 & \dots&  \\
|{\rm{Cil}}_0{\Gamma_+}[0]|&1& 2 & 0 & 0 & 0 & 0 & 0 & 0 &\dots& \\
|{\rm{Cil}}_0{\Gamma_+}[1]|&1& 4 & 3 & 0 & 0 & 0 & 0 & 0 &\dots&  \\
|{\rm{Cil}}_0{\Gamma_+}[2]|&1& 6 & 9 &4  & 0 & 0 &  0& 0 &\dots&  \\
|{\rm{Cil}}_0{\Gamma_+}[3]|&1& 8 & 18  & 16 &  5& 0 &  0& 0& \dots& \\
|{\rm{Cil}}_0{\Gamma_+}[4]|&1& 10 & 30 &40 & 25 & 6 & 0 &  0& \dots& \\
|{\rm{Cil}}_0{\Gamma_+}[5]|&1& 12 & 45 & 80 &  75& 36& 7& 0 &\dots&  \\
|{\rm{Cil}}_0{\Gamma_+}[6]|&1& 14 & 63 &140  & 175 & 126 &  49& 8 & \dots& \\
~~~~~~~~~~\vdots &\vdots & \vdots  &\vdots  & \vdots & \vdots  &\vdots  & \vdots  & \vdots  & \ddots  &  \\

\end{array}
$$

\begin{obs}

Note that $a(n)=|\text{Cil}_0{\Gamma_+}[n]|_1$ is the sequence  A045943	of	triangular matchstick numbers:
$a(n)=3 n \frac{(n+1)}{2}$	(\url{https://oeis.org/A045943}).

Taking $a(n)=|\text{Cil}_0{\Gamma_+}[n+1]|_2$, we have the sequence  A210440, where $a(n)=2 n (n+1) \frac{(n+2)}{3}$ (\url{https://oeis.org/A210440}).
\end{obs}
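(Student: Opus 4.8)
The plan is to first distill from Proposition \ref{Cilcerocal} a single closed formula for every entry $|\text{Cil}_0{\Gamma_+}[n]|_k$, and then simply read off the cases $k=1$ and $k=2$, matching them against the two OEIS formulas in the statement. In this way both assertions reduce to a uniform computation rather than two separate arguments.

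First I would combine the three parts of Proposition \ref{Cilcerocal}. By part (i) the sequence $|\breve{\text{Cil}}_0{\Gamma_+}[i]|$ is concentrated in a single coordinate, namely $|\breve{\text{Cil}}_0{\Gamma_+}[i]|_k=(i+2)\,\delta_{i,k}$. Substituting this into the sum of part (ii) collapses it to at most one nonzero term, the one with $i=k$, which survives precisely when $-1\le k\le n-1$; hence
$$|\text{Cil}_0\partial{\Gamma_+}[n]|_k=\binom{n+1}{k+1}(k+2)\quad\text{for }-1\le k\le n-1,$$
and $0$ otherwise. Adding the boundary-free part via (iii), and observing that the correction $|\breve{\text{Cil}}_0{\Gamma_+}[n]|_k=(n+2)\,\delta_{n,k}=\binom{n+1}{n+1}(n+2)\,\delta_{n,k}$ supplies exactly the missing $k=n$ case, I would obtain the single expression
$$|\text{Cil}_0{\Gamma_+}[n]|_k=\binom{n+1}{k+1}(k+2),\qquad -1\le k\le n,$$
with all remaining entries zero. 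A quick check against the displayed table, e.g.\ $|\text{Cil}_0{\Gamma_+}[2]|=(1,6,9,4,0,\dots)$, confirms this.

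With the closed formula in hand, the two claims are immediate specializations. Setting $k=1$ yields
$$|\text{Cil}_0{\Gamma_+}[n]|_1=3\binom{n+1}{2}=\frac{3n(n+1)}{2},$$
which is the triangular matchstick sequence A045943. Setting $k=2$ and shifting $n\mapsto n+1$ yields
$$|\text{Cil}_0{\Gamma_+}[n+1]|_2=4\binom{n+2}{3}=\frac{2n(n+1)(n+2)}{3},$$
which is A210440, as claimed.

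The computation is entirely mechanical once the closed formula is established, so essentially all the content lives in the first step. The only (minor) obstacle is index bookkeeping at the boundary between the two regimes of Proposition \ref{Cilcerocal}: one must verify that the single surviving term of the partial sum over $-1\le k\le n-1$ and the concentrated correction $|\breve{\text{Cil}}_0{\Gamma_+}[n]|$ at $k=n$ patch together seamlessly into one binomial expression valid across the whole range $-1\le k\le n$, so that no separate treatment of the top dimension is needed.
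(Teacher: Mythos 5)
Your proposal is correct and follows essentially the same route as the paper: the closed formula $|\text{Cil}_0{\Gamma_+}[n]|_k=\binom{n+1}{k+1}(k+2)$ is exactly the collapsed form of the sum in Proposition \ref{Cilcerocal} (equivalently, of the paper's later matrix description $(\text{cil}_0)_{nm}=\sum_{j=-1}^{n}\binom{1+n}{1+j}(j+2)\delta_{j,m}$), from which the paper's tables and the two OEIS identifications are read off just as you do. Your specializations $3\binom{n+1}{2}=\frac{3n(n+1)}{2}$ and $4\binom{n+2}{3}=\frac{2n(n+1)(n+2)}{3}$ check out, including the index shift $n\mapsto n+1$ in the second case.
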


\subsubsection{ The 2-cylinder}

Now, we give the third kind of cylinder we are considering in this work.

\begin{defi}
For each $[n] \in {\mathbf{\Gamma}_+}$, we introduce $\text{Cil}_2{\Gamma_+}[n]$, the augmented semi-simplicial set defined as the join:
$$\text{Cil}_2{\Gamma_+}[n]:={\Gamma_+}[n] \boxplus {\Gamma_+}[n].$$
Recall that, for each $m \in {\mathbf{\Gamma}_+}$, $\text{Cil}_2{\Gamma_+}[n]_m$ is constituted by pairs of the form $(\sigma ,\tau )\in {\Gamma_+}[n]_p\times {\Gamma_+}[n]_q$ with $p+q=m-1$. We can follow the notation established in previous subsections. Moreover, if $ \sigma \in {\Gamma_+}[n]_p$, $\tau \in {\Gamma_+}[n]_q$, $p+q=m-1,$ and $0\leq i\leq m$, the  face operator is given by
$$
d_i((\sigma,\tau)) =
\left\{
\begin{array}{lll}
 (d_i(\sigma), \tau) & \mbox{if $0\leq i \leq p $},\\ [1pc]
(\sigma, d_{i-p-1}(\tau))  & \mbox{ if $p+1 \leq i \leq m$}.
\end{array} \right.
$$
\end{defi}

\begin{ej}
We describe ${\rm{Cil}}_2{\Gamma_+}[1]$:

${\rm{Cil}}_2{\Gamma_+}[1]_{-1}=\{*\}$,

${\rm{Cil}}_2{\Gamma_+}[1]_{0}=\{(\emptyset, (0)),(\emptyset, (1)), ((0), \emptyset) , ((1), \emptyset)  \}=\{ 0\sp{\prime},1\sp{\prime}, 0,1\},$

${\rm{Cil}}_2{\Gamma_+}[1]_{1}=\{(\emptyset, (0,1)),((0), (0)), ((0), (1)), ((1), (0)), ((1), (1)),( (0,1), \emptyset ) \}\\=\{(0\sp{\prime},1\sp{\prime}), (0,0\sp{\prime}), (0,1\sp{\prime}),(1,0\sp{\prime}), (1,1\sp{\prime}), (0,1)\},$

${\rm{Cil}}_2{\Gamma_+}[1]_{2}=\{((0), (0,1)), ((1), (0,1)),((0,1), (0)), ((0,1), (1)))\}=\\
 \{(0, 0\sp{\prime},1\sp{\prime}), (1 ,0\sp{\prime},1\sp{\prime}),(0,1 ,0\sp{\prime}), (0,1,1\sp{\prime})\},$

 ${\rm{Cil}}_2{\Gamma_+}[1]_{3}=\{((0,1), (0,1))\}=\{(0, 1, 0\sp{\prime},1\sp{\prime}))\}.$

 And  ${\rm{Cil}}_2{\Gamma_+}[1]_{m}=\emptyset$ for $m\geq 4$.

\end{ej}

We have an obvious induced functor $\text{Cil}_2 \colon {\mathbf{\Gamma_+}} \to \Set, \, \text{Cil}_2([n])= \text{Cil}_2{\Gamma_+}[n]$ and, by Theorem \ref{ext-pr}, the colimit-preserving functor $(-)\widetilde{\vartriangleright} \text{Cil}_2 :\mathbf{Sets}^{\mathbf{\Gamma}_+^{op}}\to \mathbf{Sets}^{\mathbf{\Gamma}_+^{op}}$, making commutative the diagram
$$
\xymatrix{ \ar@{^(->}[d]_{\Yoneda} {\mathbf{\Gamma_+}} \ar[r]^{\text{Cil}_2} & \mathbf{Sets}^{\mathbf{\Gamma}_+^{op}}\\
\mathbf{Sets}^{\mathbf{\Gamma}_+^{op}}\ar[ru]_{(-)\widetilde{\vartriangleright} \text{Cil}_2} & }, $$

\begin{defi}
For any augmented semi-simplicial set $X \in \mathbf{Sets}^{\mathbf{\Gamma}_+^{op}}$, we define its \emph{2-cylinder} as
$$\text{Cil}_2(X):= X \widetilde{\vartriangleright} \text{Cil}_2$$ Similarly, for augmented semi-simplicial maps $\text{Cil}_2(f):= f \widetilde{\vartriangleright} \text{Cil}_2$.
\end{defi}
This functor $\text{Cil}_2$  has a right adjoint $\text{r}_{\text{Cil}_2} \colon \Set^{{\mathbf{\Gamma}_+^{op}}} \to \Set^{{\mathbf{\Gamma}_+^{op}}}.$

If we have an augmented semi-simplicial subset $X \subset \Gamma_+[n]$, we can take:

$\text{Cil}_2X=\{(\sigma, \tau) \in \text{Cil}_2{\Gamma_+}[n]\hspace{2pt}|\hspace{2pt} \text{there is }\hspace{2pt}\gamma \in X_m \hspace{2pt} \text{with}  \hspace{2pt}(\text{Im}(\sigma) \cup \text{Im}(\tau)) \subset \text{Im}(\gamma) \}.$

If we denote $\breve{\text{Cil}}_2{\Gamma_+}[n]={\text{Cil}_2}{\Gamma_+}[n] \setminus \text{Cil}_2\partial \Gamma_+[n]$, then it is not hard to prove the following result. Recall our convention that the combinatorial number $\binom{p}{q}$ is zero whenever $p<q$ or $q<0.$

\begin{prop} For any $n, m \geq -1$ we have
\begin{itemize}
\item[(i)] The number total of simplices in $\breve{\text{Cil}}_2{\Gamma_+}[n]$ is $$\sum_{i=-1}^{n} \binom{n+1}{i+1} \sum_{j=-1}^i \binom{i+1}{j+1}.$$
\item[(ii)] $|\breve{\text{Cil}}_2{\Gamma_+}[n]|_m=  \sum_{i=-1}^{n} \binom{n+1}{i+1} \binom{i+1}{m-n}.$
\end{itemize}
\end{prop}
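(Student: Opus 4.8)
The plan is to first give an explicit combinatorial description of the simplices of $\breve{\text{Cil}}_2\Gamma_+[n]$, and then reduce both counts to elementary binomial identities by identifying each strictly increasing map with its image. Throughout I write $[n]=\{0,\dots,n\}$ for the vertex set.

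First I would determine exactly which pairs survive in $\breve{\text{Cil}}_2\Gamma_+[n]=\text{Cil}_2\Gamma_+[n]\setminus\text{Cil}_2\partial\Gamma_+[n]$. A face $\gamma$ of $\Gamma_+[n]$ lies in $\partial\Gamma_+[n]$ precisely when it is not the top face $\iota_n$, equivalently when $\text{Im}(\gamma)\subsetneq[n]$. By the description of $\text{Cil}_2 X$ for a subcomplex $X\subseteq\Gamma_+[n]$, a pair $(\sigma,\tau)$ belongs to $\text{Cil}_2\partial\Gamma_+[n]$ iff there is a proper subset $G\subsetneq[n]$ with $\text{Im}(\sigma)\cup\text{Im}(\tau)\subseteq G$, which happens iff $\text{Im}(\sigma)\cup\text{Im}(\tau)\subsetneq[n]$. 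Hence
$$\breve{\text{Cil}}_2\Gamma_+[n]_m=\{(\sigma,\tau)\in\Gamma_+[n]_p\times\Gamma_+[n]_q : p+q=m-1,\ \text{Im}(\sigma)\cup\text{Im}(\tau)=[n]\}.$$
This is the step I expect to be the main obstacle, since it requires carefully unwinding the definitions of the boundary and of $\text{Cil}_2$ on subcomplexes; once it is in place, the rest is pure counting.

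Next I would pass to subsets. Since a strictly increasing map $\sigma\colon[p]\to[n]$ is determined by its image $A:=\text{Im}(\sigma)$, a subset of $[n]$ of cardinality $p+1$, the assignment $(\sigma,\tau)\mapsto(A,B)$ with $A=\text{Im}(\sigma)$ and $B=\text{Im}(\tau)$ is a bijection between $\breve{\text{Cil}}_2\Gamma_+[n]$ and the ordered pairs $(A,B)$ of subsets of $[n]$ with $A\cup B=[n]$, with the dimension recovered as $m=|A|+|B|-1$. For part (ii) I fix the first coordinate: for a subset $A$ with $|A|=i+1$, the condition $A\cup B=[n]$ forces $B\supseteq[n]\setminus A$, so $B=([n]\setminus A)\sqcup S$ with $S\subseteq A$, whence $|B|=(n-i)+|S|$. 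Imposing $|A|+|B|=m+1$ gives $|S|=m-n$, so there are exactly $\binom{i+1}{m-n}$ admissible $B$. Summing over the $\binom{n+1}{i+1}$ subsets $A$ of size $i+1$ yields
$$|\breve{\text{Cil}}_2\Gamma_+[n]|_m=\sum_{i=-1}^{n}\binom{n+1}{i+1}\binom{i+1}{m-n},$$
which is (ii); the edge case $A=\emptyset$ (i.e. $i=-1$, forcing $B=[n]$ and $m=n$) is handled correctly by the conventions $\binom{0}{0}=1$ and $\binom{p}{q}=0$ for $q<0$.

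Finally, for part (i) I would count the total number of simplices either by summing (ii) over all $m$ and interchanging the order of summation, or directly: for a fixed $A$ of size $i+1$, every subset $S\subseteq A$ gives exactly one admissible $B=([n]\setminus A)\sqcup S$, and there are $\sum_{j=-1}^{i}\binom{i+1}{j+1}$ such subsets (indexing $S$ by its cardinality $j+1$). Summing over $A$ gives
$$\sum_{i=-1}^{n}\binom{n+1}{i+1}\sum_{j=-1}^{i}\binom{i+1}{j+1},$$
which is the claimed formula (i). As a sanity check, each inner sum collapses via $\sum_{k=0}^{r}\binom{r}{k}=2^{r}$ and then the binomial theorem gives $\sum_{a=0}^{n+1}\binom{n+1}{a}2^{a}=3^{n+1}$, so the total number of simplices equals $3^{n+1}$, in agreement with the tabulated values $1,3,9,\dots$ for $n=-1,0,1,\dots$.
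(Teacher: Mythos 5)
Your proof is correct and follows essentially the same route as the paper's (one-line) argument: identifying the simplices of $\breve{\text{Cil}}_2{\Gamma_+}[n]$ with ordered pairs of subsets of $[n]$ whose union is all of $[n]$, and counting by the cardinality of the first subset. Your write-up merely supplies the details the paper leaves implicit (in particular the verification that the surviving pairs are exactly those with full union, and the dimension bookkeeping, where the paper's stated relation even contains a typo), and the $3^{n+1}$ sanity check against the tabulated rows is a nice addition.
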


\begin{proof} It suffices to count the pair of subsets $\{v_0, \cdots, v_p\}$, $\{w_0, \cdots, w_q\}$ of $\{0,1, \cdots, n\}=[n]$ such that their union is equal to $[n]$ and $(1+p)+(1+q)=(1+m)-(1+n)$.
\end{proof}

Using the proposition above, we can describe the cardinal sequences $|\breve{\text{Cil}}_2{\Gamma_+}[n]|$, for all $n$. Here we provide a list of the six first examples:
$$
\begin{array}{l |l ccccccccc}
 &-1& 0 & 1 & 2 & 3 & 4 & 5 & 6 & \dots \\\hline
|\breve{\text{Cil}}_2{\Gamma_+}[-1]|&1& 0 & 0 & 0 & 0 & 0 & 0 & 0 & \dots&  \\
|\breve{\text{Cil}}_2{\Gamma_+}[0]|&0& 2 & 1 & 0 & 0 & 0 & 0 & 0 &\dots& \\
|\breve{\text{Cil}}_2{\Gamma_+}[1]|&0& 0 &   4 &  4&  1 & 0 & 0 & 0 &\dots&  \\
|\breve{\text{Cil}}_2{\Gamma_+}[2]|&0& 0 & 0 & 8 &   12&   6&    1& 0 &\dots&  \\
|\breve{\text{Cil}}_2{\Gamma_+}[3]|&0& 0 &  0& 0 & 16  & 32 &  24 & 8  & \dots& \\
|\breve{\text{Cil}}_2{\Gamma_+}[4]|&0& 0 & 0 &0 &0 &  32 &80 &80 & \dots& \\
|\breve{\text{Cil}}_2{\Gamma_+}[5]|&0&  0& 0 &  0& 0 &0 & 64 &192&\dots&  \\
|\breve{\text{Cil}}_2{\Gamma_+}[6]|&0& 0 & 0 & 0 &  0& 0 & 0 &  128& \dots& \\
~~~~~~~~~~\vdots &\vdots & \vdots  &\vdots  & \vdots & \vdots  &\vdots  & \vdots  & \vdots  & \ddots  & \\

\end{array}
$$

This will enable the calculation of the sequences $|\text{Cil}_2\partial{\Gamma_+}[n]|$ and $|\text{Cil}_2\partial{\Gamma_+}[n]|$. We need the following result, which provides an easy algorithm.

\begin{prop}\label{2cilindroprop}  For any $n, m \geq -1$ we have:

\begin{itemize}

\item[(i)] $|\text{Cil}_2\partial{\Gamma_+}[n]|=\sum _{i=-1}^{n-1}\binom{n+1}{i+1}  |\breve{\text{Cil}}_2{\Gamma_+}[i]|$.

\item[(ii)] $|\text{Cil}_2{\Gamma_+}[n]|=|\text{Cil}_2\partial{\Gamma_+}[n]|+|\breve{\text{Cil}}_2{\Gamma_+}[n]|$.

\end{itemize}
\end{prop}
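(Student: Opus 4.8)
The plan is to prove both parts at once through a single stratification of $\text{Cil}_2{\Gamma_+}[n]$ by the combined image of a pair. For $(\sigma,\tau)\in\text{Cil}_2{\Gamma_+}[n]$ set $U(\sigma,\tau):=\text{Im}(\sigma)\cup\text{Im}(\tau)\subseteq[n]$. Since each pair has a unique combined image, one obtains a disjoint decomposition, refining in each dimension $m$,
$$\text{Cil}_2{\Gamma_+}[n]=\bigsqcup_{U\subseteq[n]}\text{Cil}_2{\Gamma_+}[n]\big|_U,\qquad \text{Cil}_2{\Gamma_+}[n]\big|_U:=\{(\sigma,\tau):U(\sigma,\tau)=U\}.$$
The first step is to identify the two extreme situations with the objects in the statement.

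Using the description of $\text{Cil}_2X$ for an augmented semi-simplicial subset $X\subseteq{\Gamma_+}[n]$ recalled just above the proposition, a pair lies in $\text{Cil}_2\partial{\Gamma_+}[n]$ exactly when its combined image is contained in a proper face of $[n]$, i.e. when $U(\sigma,\tau)\subsetneq[n]$. Dually, by the count used in the previous proposition (pairs of subsets of $[n]$ whose union is all of $[n]$), the complement $\breve{\text{Cil}}_2{\Gamma_+}[n]=\text{Cil}_2{\Gamma_+}[n]\setminus\text{Cil}_2\partial{\Gamma_+}[n]$ is precisely the top stratum $\text{Cil}_2{\Gamma_+}[n]\big|_{[n]}$ of pairs with $U(\sigma,\tau)=[n]$. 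Part (ii) is then immediate: $\text{Cil}_2{\Gamma_+}[n]$ is the disjoint union of $\text{Cil}_2\partial{\Gamma_+}[n]$ and $\breve{\text{Cil}}_2{\Gamma_+}[n]$, so their sequential cardinals add dimensionwise.

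For part (i) the key step is a bijection between strata. Given a subset $U\subseteq[n]$ with $|U|=i+1$, the unique order-preserving isomorphism $U\cong[i]$ carries a pair $(\sigma,\tau)$ with $U(\sigma,\tau)=U$ to a pair of maps into $[i]$ whose combined image is all of $[i]$, and conversely; this identifies $\text{Cil}_2{\Gamma_+}[n]\big|_U$ with $\text{Cil}_2{\Gamma_+}[i]\big|_{[i]}=\breve{\text{Cil}}_2{\Gamma_+}[i]$. Because the join grading $p+q=m-1$ is intrinsic to the pair and is untouched by relabelling vertices, this bijection preserves the dimension $m$, hence $|\text{Cil}_2{\Gamma_+}[n]\big|_U|=|\breve{\text{Cil}}_2{\Gamma_+}[i]|$ as augmented sequences. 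Summing the decomposition of $\text{Cil}_2\partial{\Gamma_+}[n]$ over all proper $U$ and grouping the $\binom{n+1}{i+1}$ subsets of each size $i+1$ together for $-1\le i\le n-1$ yields
$$|\text{Cil}_2\partial{\Gamma_+}[n]|=\sum_{i=-1}^{n-1}\binom{n+1}{i+1}|\breve{\text{Cil}}_2{\Gamma_+}[i]|.$$

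The routine parts are the dimension bookkeeping and the degenerate case $i=-1$, where $U=\emptyset$ forces $\sigma=\tau=\emptyset$ and the single stratum reproduces ${\bf 1}_{-1}$, consistent with $\binom{n+1}{0}=1$. The one point needing genuine care --- the main obstacle --- is verifying that the stratum over a proper $U$ really is the \emph{full} interior $\breve{\text{Cil}}_2{\Gamma_+}[i]$ and not merely a subset: one must check that every pair of maps into $[i]$ with combined image $[i]$ actually arises, via $U\hookrightarrow[n]$, from an admissible pair of $\text{Cil}_2{\Gamma_+}[n]$. Here this is automatic, since $\text{Cil}_2$ imposes no condition on a pair beyond $p+q=m-1$; the same scheme proves the analogous Propositions \ref{sucesioncardinalCil}(ii) and \ref{Cilcerocal}(ii), where one must additionally check that the relations $\preccurlyeq$ and $\prec$ transport across the order isomorphism $U\cong[i]$ (which they do, being order-defined).
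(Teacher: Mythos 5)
Your proof is correct, and it rests on exactly the counting principle the paper uses: the paper leaves Proposition \ref{2cilindroprop} unproved as ``an easy algorithm,'' but its proof of the preceding proposition (counting pairs of subsets of $[n]$ whose union equals $[n]$) is precisely your top stratum, and your stratification by the combined image $\mathrm{Im}(\sigma)\cup\mathrm{Im}(\tau)$, grouped by cardinality, is the intended argument. Your identification of $\text{Cil}_2\partial\Gamma_+[n]$ with the proper strata and the dimension-preserving order-isomorphism onto $\breve{\text{Cil}}_2\Gamma_+[i]$ are both sound, including the degenerate case $i=-1$.
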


\bigskip
\begin{ej}

For instance, for $n=2$, we have

$|{\rm{Cil}}_2\partial{\Gamma_+}[2]|= |\breve{{\rm{Cil}}}_2{\Gamma_+}[-1]|+ \binom{3}{1} |\breve{{\rm{Cil}}}_2{\Gamma_+}[0]|+ \binom{3}{2} |\breve{{\rm{Cil}}}_2{\Gamma_+}[1]|$.

As $|\breve{{\rm{Cil}}}_2{\Gamma_+}[-1]|=(1, 0 , 0 , 0 , \cdots)$, $|\breve{{\rm{Cil}}}_2{\Gamma_+}[0]|=(0, 2 , 1 , 0 , 0 , 0 ,\cdots)$ and  $|\breve{{\rm{Cil}}}_2{\Gamma_+}[1]|=(0, 0 , 4 , 4 , 1 , 0 , 0 , 0 ,\cdots)$ we conclude that
$$|{\rm{Cil}}_2\partial{\Gamma_+}[2]|=(1, 6 , 15 , 12 , 3 , 0 , 0 , 0 , \cdots).$$
Finally, as $|\breve{{\rm{Cil}}}_2{\Gamma_+}[2]|=(0,0,0,8,12,6,1,0,0,\cdots)$ we have that $$|{\rm{Cil}}_2{\Gamma_+}[2]|=|{\rm{Cil}}_2\partial{\Gamma_+}[2]|+|\breve{{\rm{Cil}}}_2{\Gamma_+}[2]|=(1, 6 , 15 , 20 , 15 , 6 , 1 , 0 , \cdots).$$
\end{ej}

Using this simple algorithm, we can compute the sequence cardinals $|\text{Cil}_2\partial{\Gamma_+}[n]|$ and
$|\text{Cil}_2{\Gamma_+}[n]|$, for all $n$. Here, we provide a table for the first examples:

$$
\begin{array}{l |l ccccccccc}
 &-1& 0 & 1 & 2 & 3 & 4 & 5 & 6 & \dots \\\hline
|\text{Cil}_2\partial{\Gamma_+}[-1]|&0& 0 & 0 & 0 & 0 & 0 & 0 & 0 & \dots&  \\
|\text{Cil}_2\partial{\Gamma_+}[0]|&1& 0 & 0 & 0 & 0 & 0 & 0 & 0 &\dots& \\
|\text{Cil}_2\partial{\Gamma_+}[1]|&1& 4 & 2 & 0 & 0 & 0 & 0 & 0 &\dots&  \\
|\text{Cil}_2\partial{\Gamma_+}[2]|&1& 6 & 15 & 12  & 3 & 0&  0& 0 &\dots&  \\
|\text{Cil}_2\partial{\Gamma_+}[3]|&1& 8 & 28 & 56  & 54 & 24 &  4& 0 &\dots&  \\
|\text{Cil}_2\partial{\Gamma_+}[4]|&1& 10 & 45 &  120& 210 & 220 & 130  &40  & \dots& \\
|\text{Cil}_2\partial{\Gamma_+}[5]|&1& 12 & 66  & 220 & 495 & 792 & 860 & 600  & \dots& \\
|\text{Cil}_2\partial{\Gamma_+}[6]|&1& 14 & 91  &364  & 1001 & 2002& 3003& 3304 &\dots&  \\
|\text{Cil}_2\partial{\Gamma_+}[7]|&1& 16  & 120 &  560&  1820 & 4368 &8008  & 11440 & \dots& \\
~~~~~~~~~~\vdots &\vdots & \vdots  &\vdots  & \vdots & \vdots  &\vdots  & \vdots  & \vdots  & \ddots  & \\

\end{array}
$$

$$
\begin{array}{l |l ccccccccc}
 &-1& 0 & 1 & 2 & 3 & 4 & 5 & 6 & \dots \\\hline
|\text{Cil}_2{\Gamma_+}[-1]|&1& 0 & 0 & 0 & 0 & 0 & 0 & 0 & \dots&  \\
|\text{Cil}_2{\Gamma_+}[0]|&1& 2 & 1 & 0 & 0 & 0 & 0 & 0 &\dots& \\
|\text{Cil}_2{\Gamma_+}[1]|&1& 4 & 6 & 4 & 1 & 0 & 0 & 0 &\dots&  \\
|\text{Cil}_2{\Gamma_+}[2]|&1& 6 & 15 & 20 & 15 & 6 &  1& 0 &\dots& \\
|\text{Cil}_2{\Gamma_+}[3]|&1& 8 &28  &56  &70  &56  & 28 & 8 & \dots& \\
|\text{Cil}_2{\Gamma_+}[4]|&1& 10  & 45 & 120  & 210 & 252 & 120 &45  &  \dots& \\
|\text{Cil}_2{\Gamma_+}[5]|&1&  12 & 66  & 220  & 495 & 792 &495 & 220 &\dots&  \\
|\text{Cil}_2{\Gamma_+}[6]|&1&  14& 91 & 364 &  1001&  2002& 3003  & 3432 & \dots& \\
~~~~~~~~~~\vdots &\vdots & \vdots  &\vdots  & \vdots & \vdots  &\vdots  & \vdots  & \vdots  & \ddots  &  \\

\end{array}
$$

\begin{obs} Observe that the functor $\text{Cil}_2  \colon \Set^{{\mathbf{\Gamma}_ +^{op}}} \to \Set^{{\mathbf{\Gamma}_+^{op}}}$ and the functor $ \text{Dup}\colon \Set^{{\mathbf{\Gamma}_+^{op}}} \to \Set^{{\mathbf{\Gamma}_+^{op}}}$ given as $$\text{Dup}(X):=X\boxplus X$$ \noindent verify that
$\text{Dup} \Yoneda= \text{Cil}_2  \Yoneda $ ($\Yoneda$ is the Yoneda functor). However, these functors are different. For instance, if $X=\partial{\Gamma_+}[2]$, then one has that $|\text{Cil}_2(\partial{\Gamma_+}[2])|=(1,6,15,12,3,0, \cdots)$ and $|\text{Dup}(\partial{\Gamma_+}[2])|=(1,6,15,18,9,0, \cdots)$. This implies that $\text{Dup}$ does not preserve colimits.
\end{obs}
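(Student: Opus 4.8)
The plan is to treat the three assertions separately. The identity $\text{Dup}\,\Yoneda=\text{Cil}_2$ of functors $\mathbf{\Gamma}_+\to\mathbf{Sets}^{\mathbf{\Gamma}_+^{op}}$ is immediate from the definitions: on an object $[n]$ we have $\text{Dup}(\Yoneda([n]))={\Gamma_+}[n]\boxplus{\Gamma_+}[n]=\text{Cil}_2([n])$, and on a morphism $\varphi\colon[n]\to[m]$ both sides are the join ${\Gamma_+}[\varphi]\boxplus{\Gamma_+}[\varphi]$, since $\Yoneda(\varphi)={\Gamma_+}[\varphi]$ and $\text{Dup}$ applies $\boxplus$ diagonally. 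Thus the two composites agree as functors $\mathbf{\Gamma}_+\to\mathbf{Sets}^{\mathbf{\Gamma}_+^{op}}$.

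To see that $\text{Dup}$ and the cylinder functor $\text{Cil}_2(-)=(-)\widetilde{\vartriangleright}\text{Cil}_2$ differ as endofunctors of $\mathbf{Sets}^{\mathbf{\Gamma}_+^{op}}$, I would test them on $X=\partial{\Gamma_+}[2]$. Removing the unique top simplex $\iota_2$ from ${\Gamma_+}[2]$, whose cardinal is $(1,3,3,1,0,\dots)$, gives $|\partial{\Gamma_+}[2]|=(1,3,3,0,0,\dots)$. Because the sequential cardinal functor is monoidal for the join, $|\text{Dup}(\partial{\Gamma_+}[2])|=|\partial{\Gamma_+}[2]|\boxplus|\partial{\Gamma_+}[2]|$, and evaluating $(a\boxplus a)_m=\sum_{p+q=m-1}a_pa_q$ on $a=(1,3,3,0,\dots)$ yields $(1,6,15,18,9,0,\dots)$. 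On the other hand $\text{Cil}_2(\partial{\Gamma_+}[2])$ is the subcomplex $\text{Cil}_2\partial{\Gamma_+}[2]$ of $\text{Cil}_2{\Gamma_+}[2]$, whose cardinal $(1,6,15,12,3,0,\dots)$ was already computed via Proposition \ref{2cilindroprop} and the table below it. These sequences disagree in dimensions $2$ and $3$, so $\text{Dup}(\partial{\Gamma_+}[2])\not\cong\text{Cil}_2(\partial{\Gamma_+}[2])$ and the two functors are genuinely different.

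For the final claim I would argue by contradiction using the uniqueness clause of Theorem \ref{ext-pr}: the action functor $(-)\widetilde{\vartriangleright}\text{Cil}_2$ is, up to natural isomorphism, the only colimit-preserving extension of $\text{Cil}_2\colon\mathbf{\Gamma}_+\to\mathbf{Sets}^{\mathbf{\Gamma}_+^{op}}$ along $\Yoneda$. If $\text{Dup}$ preserved colimits, then since $\text{Dup}\,\Yoneda=\text{Cil}_2$ it would itself be such an extension, forcing $\text{Dup}\cong(-)\widetilde{\vartriangleright}\text{Cil}_2$; this is refuted by the computation above, where the two take non-isomorphic values at $\partial{\Gamma_+}[2]$, so $\text{Dup}$ cannot preserve colimits. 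The step demanding the most care—and the conceptual heart of the remark—is precisely this invocation of uniqueness: one must read Theorem \ref{ext-pr} as pinning down a colimit-preserving functor up to isomorphism by its restriction along $\Yoneda$, so that agreement on representables together with colimit-preservation would force full agreement. The cardinal comparison is what actually exposes the obstruction, since $\partial{\Gamma_+}[2]$ is a nontrivial colimit of representables—a gluing of its three edges along their shared vertices—that $\text{Dup}$ fails to respect while $\text{Cil}_2(-)$ does.
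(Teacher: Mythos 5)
Your proposal is correct and follows the same route the paper takes implicitly: the equality on representables is definitional, the two cardinal sequences $(1,6,15,12,3,0,\dots)$ and $(1,6,15,18,9,0,\dots)$ are obtained exactly as in the paper (via Proposition \ref{2cilindroprop} for $\text{Cil}_2\partial\Gamma_+[2]$ and via monoidality of $|\cdot|$ for the join square of $(1,3,3,0,\dots)$), and the failure of colimit preservation follows from the uniqueness clause of Theorem \ref{ext-pr}. Your explicit spelling-out of that uniqueness argument is a useful clarification of what the paper leaves tacit, but it is the same proof.
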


\begin{obs} For the functors $\text{Cil}_0, \text{Cil}, \text{Cil}_2, \text{Dup} \colon \Set^{{\mathbf{\Gamma}_ +^{op}}} \to \Set^{{\mathbf{\Gamma}_+^{op}}}$, there are natural transformacions
$$\text{Cil}_0 \subset \text{Cil} \subset \text{Cil}_2 \subset  \text{Dup}. $$
\end{obs}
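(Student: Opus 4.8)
The plan is to construct the three arrows first at the level of the co-semi-simplicial objects, i.e. as morphisms in $(\Set^{\mathbf{\Gamma}_+^{op}})^{\mathbf{\Gamma}_+}$, and then transport them to the extended functors on $\Set^{\mathbf{\Gamma}_+^{op}}$ by means of the action bifunctor and the universal property of Theorem \ref{ext-pr}. The first two inclusions are genuine levelwise inclusions of sub-presheaves, while the last one is the canonical comparison from a colimit-preserving (left Kan) extension.

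First I would verify the inclusions on representables. For every $[n]$ and every $m$ one has $\text{Cil}_0\Gamma_+[n]_m\subseteq\text{Cil}\Gamma_+[n]_m\subseteq\text{Cil}_2\Gamma_+[n]_m$: the first because $\sigma\prec\tau$ forces $\sigma\preccurlyeq\tau$ (the condition $|\sigma([p])\cap\tau([q])|=0$ gives $\leq 1$, and $\sigma(i)<\tau(j)$ gives $\sigma(i)\leq\tau(j)$), and the second because $\text{Cil}_2\Gamma_+[n]=\Gamma_+[n]\boxplus\Gamma_+[n]$ consists of \emph{all} pairs $(\sigma,\tau)$ with $p+q=m-1$, of which $\text{Cil}\Gamma_+[n]$ is the subset cut out by $\preccurlyeq$. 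These subsets are closed under the face operators, since $d_i$ replaces one coordinate by a face of it, which only shrinks the images $\sigma([p])$ or $\tau([q])$ and hence preserves both $\prec$ and $\preccurlyeq$; so each inclusion is a map of augmented semi-simplicial sets. They are also natural in $[n]$: a morphism $\alpha\colon[n]\to[n']$ of $\mathbf{\Gamma}_+$ acts by $(\sigma,\tau)\mapsto(\alpha\sigma,\alpha\tau)$, and since $\alpha$ is strictly increasing it is injective and order preserving, so it preserves condition (i) (injectivity keeps the cardinality of the intersection) and condition (ii) (monotonicity keeps the inequalities). Thus $\text{Cil}_0\hookrightarrow\text{Cil}\hookrightarrow\text{Cil}_2$ are monomorphisms in $(\Set^{\mathbf{\Gamma}_+^{op}})^{\mathbf{\Gamma}_+}$; in particular all three objects are regular in the sense used for Theorem \ref{tildebigodottildedot}.

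Next I would extend these to $\Set^{\mathbf{\Gamma}_+^{op}}$. Since $(-)\widetilde{\vartriangleright}(-)$ is a bifunctor (Definition \ref{actions}), functoriality in the second variable turns the morphisms $\text{Cil}_0\hookrightarrow\text{Cil}\hookrightarrow\text{Cil}_2$ into natural transformations $X\widetilde{\vartriangleright}\text{Cil}_0\to X\widetilde{\vartriangleright}\text{Cil}\to X\widetilde{\vartriangleright}\text{Cil}_2$, natural in $X$, which are exactly $\text{Cil}_0\Rightarrow\text{Cil}\Rightarrow\text{Cil}_2$. For the last arrow I would invoke that, by the preceding remark, $\text{Cil}_2\Yoneda=\text{Dup}\Yoneda$; since $\text{Cil}_2=L^{\text{Cil}_2}$ is the colimit-preserving extension of $\text{Cil}_2\Yoneda$ along $\Yoneda$, the universal property in Theorem \ref{ext-pr} applied to $F=\text{Dup}$ (which extends the same functor on representables) yields a canonical natural transformation $\text{Cil}_2=L^{\text{Cil}_2}\Rightarrow\text{Dup}$, concretely induced by the cocone $\Gamma_+[n]\boxplus\Gamma_+[n]\xrightarrow{\,x\boxplus x\,}X\boxplus X$ indexed by $([n],x)\in\int_{\mathbf{\Gamma}_+}X$. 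Composing the three gives $\text{Cil}_0\Rightarrow\text{Cil}\Rightarrow\text{Cil}_2\Rightarrow\text{Dup}$.

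The main obstacle is to justify that all three arrows are \emph{monomorphisms}, matching the symbol $\subset$, rather than merely natural transformations. For $\text{Cil}_0\Rightarrow\text{Cil}\Rightarrow\text{Cil}_2$ this reduces to showing that the colimit-preserving extension sends a levelwise injection of regular co-semi-simplicial objects to a levelwise injection; I would argue this from the explicit coend/colimit description of the action, writing $(X\widetilde{\vartriangleright}Y)_m$ as the quotient of $\bigsqcup_{([n],x)} Y([n])_m$ by the identifications coming from $\int_{\mathbf{\Gamma}_+}X$, and checking that the gluing relations on the sub-object are the restrictions of those on the ambient object, which holds precisely because the inclusions commute with all face operators. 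For $\text{Cil}_2\Rightarrow\text{Dup}$ the delicate point is injectivity of the comparison map on each $X$: one must show that two representatives $([n],x,(\sigma,\tau))$ and $([n'],x',(\sigma',\tau'))$ with the same image $(x\cdot\sigma,x\cdot\tau)=(x'\cdot\sigma',x'\cdot\tau')$ in $X\boxplus X$ are already identified in the colimit $\text{Cil}_2(X)$. The entrywise inequalities visible in the displayed tables, for example $|\text{Cil}_2(\partial\Gamma_+[2])|=(1,6,15,12,3,0,\dots)\leq(1,6,15,18,9,0,\dots)=|\text{Dup}(\partial\Gamma_+[2])|$, are consistent with this and serve as a useful sanity check.
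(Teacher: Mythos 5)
The paper records this statement as a remark without proof, so there is nothing of the authors' to compare against; judged on its own, your construction is correct and is surely the intended one. The levelwise containments $\text{Cil}_0\Gamma_+[n]_m\subseteq\text{Cil}\Gamma_+[n]_m\subseteq\text{Cil}_2\Gamma_+[n]_m$, their closure under the face operators, and their naturality in $[n]$ are all verified correctly; functoriality of $(-)\widetilde{\vartriangleright}(-)$ in the second variable then produces $\text{Cil}_0\Rightarrow\text{Cil}\Rightarrow\text{Cil}_2$, and the comparison $\text{Cil}_2\Rightarrow\text{Dup}$, induced by the cocone $\hat{x}\boxplus\hat{x}\colon\Gamma_+[n]\boxplus\Gamma_+[n]\to X\boxplus X$ over $\int_{\mathbf{\Gamma}_+}X$, is precisely the canonical map from the colimit-preserving extension to another functor agreeing with it on representables. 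This fully establishes the existence of the three natural transformations, which is what the remark literally asserts.

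The one place where your argument falls short of a proof is the monomorphism claim carried by the symbol $\subset$. For the first two arrows you justify injectivity of $(X\widetilde{\vartriangleright}Y)_m\to(X\widetilde{\vartriangleright}Y')_m$ ``because the inclusions commute with all face operators,'' but that is not the relevant condition: compatibility with the $d_i$ only makes $Y\hookrightarrow Y'$ a morphism, whereas the colimit $\mathrm{colim}_{([n],x)}\,Y([n])_m$ is formed along the covariant structure maps $Y(\varphi)$, and colimits of sets do not preserve monomorphisms in general. What actually saves the argument is that the subobjects are saturated: since every $\varphi$ in $\mathbf{\Gamma}_+$ is injective and strictly increasing, $(\varphi\sigma,\varphi\tau)$ satisfies $\prec$ (resp.\ $\preccurlyeq$) if and only if $(\sigma,\tau)$ does, so $Y([n])_m=Y'(\varphi)^{-1}(Y([n'])_m)$; together with the injectivity (regularity) of the $Y'(\varphi)$ this forces any identifying zig-zag in $\bigsqcup Y'([n])_m$ between elements of $\bigsqcup Y([n])_m$ to stay inside $\bigsqcup Y([n])_m$, which is what gives injectivity on the colimits. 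For $\text{Cil}_2\Rightarrow\text{Dup}$ you correctly isolate the injectivity statement but do not prove it; it requires a separate argument (for instance via the decomposition of $\text{Cil}_2\Gamma_+[n]$ into the pieces $\breve{\text{Cil}}_2\Gamma_+[i]$, or by exhibiting a canonical representative for each class in the colimit). Neither point affects the existence of the natural transformations, which your proposal does establish.
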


\begin{obs}
Note that, for $n\geq 0$, $a(n)=|\text{Cil}{\Gamma_+}[n-1]|_1$ is the sequence  AA000384 of	hexagonal numbers:
$a(n) = n (2n-1)$	(\url{https://oeis.org/AA000384}).

Taking $a(n)=|\text{Cil}{\Gamma_+}[n]|_2$, we have the sequence A002492 which  is the sum of the first even squares,   where 		$a(n) = 2 n (n+1) \frac{(2 n+1)}{3}$ (\url{https://oeis.org/A002492}).

\end{obs}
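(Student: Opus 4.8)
The plan is to observe that both numerical claims are instances of a single closed formula for the cardinal sequence of $\text{Cil}_2{\Gamma_+}[n]$, and that this formula follows at once from the fact that $\text{Cil}_2{\Gamma_+}[n]$ is a join of two representables. (Here I read the two occurrences of $\text{Cil}$ in the statement as $\text{Cil}_2$, consistent with the surrounding subsection and the displayed tables; the standard cylinder $\text{Cil}$ yields pentagonal numbers and A006331 instead, as recorded in the earlier remark, and its column $m=1$ gives $0,1,5,12,\dots$, which does \emph{not} match $n(2n-1)$.)

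First I would recall that, by definition, $\text{Cil}_2{\Gamma_+}[n]={\Gamma_+}[n]\boxplus{\Gamma_+}[n]$, and that the join of representables satisfies ${\Gamma_+}[n]\boxplus{\Gamma_+}[n]\cong{\Gamma_+}[2n+1]$ (shown in \cite{GHR2022}). Applying the sequential cardinal functor, which is monoidal for $\boxplus$, gives
$$|\text{Cil}_2{\Gamma_+}[n]|=|{\Gamma_+}[n]|\boxplus|{\Gamma_+}[n]|=|{\Gamma_+}[2n+1]|={\gamma_+}[2n+1].$$
By Definition \ref{complejosdiscoyesferaysubdivisones} one has ${\gamma_+}[k]_m=\binom{k+1}{m+1}$, so that for all $m,n$
$$|\text{Cil}_2{\Gamma_+}[n]|_m=\binom{2n+2}{m+1}.$$

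The two assertions are then read off by specializing $m$. Taking the column $m=1$ with $n$ replaced by $n-1$,
$$|\text{Cil}_2{\Gamma_+}[n-1]|_1=\binom{2n}{2}=\frac{2n(2n-1)}{2}=n(2n-1),$$
which is the hexagonal sequence A000384. Taking the column $m=2$,
$$|\text{Cil}_2{\Gamma_+}[n]|_2=\binom{2n+2}{3}=\frac{(2n+2)(2n+1)(2n)}{6}=\frac{2n(n+1)(2n+1)}{3},$$
which is A002492.

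Alternatively, without invoking the join-isomorphism, I would compute the join product of the cardinal sequences directly: since ${\gamma_+}[n]_p=\binom{n+1}{p+1}$, the formula $(a\boxplus b)_m=\sum_{p+q=m-1}a_pb_q$ gives
$$|\text{Cil}_2{\Gamma_+}[n]|_m=\sum_{p+q=m-1}\binom{n+1}{p+1}\binom{n+1}{q+1},$$
and after reindexing $u=p+1,\ v=q+1$ (so $u+v=m+1$ with $u,v\geq 0$) the Vandermonde identity collapses this sum to $\binom{2n+2}{m+1}$, recovering the same closed form. I do not expect any genuine obstacle here: the whole content is the one-line reduction to ${\gamma_+}[2n+1]$, after which both OEIS formulas are elementary simplifications of binomial coefficients. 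The only point demanding care is the index bookkeeping, namely the $-1$ offset in ${\gamma_+}[k]_m=\binom{k+1}{m+1}$ and the shift $n\mapsto n-1$ occurring in the first claim.
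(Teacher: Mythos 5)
Your proof is correct, and your reading of the statement is the right one: placed in the 2-cylinder subsection, the two claims hold for $\text{Cil}_2$ and fail for the standard cylinder (whose column $m=1$ is the pentagonal sequence A000326, as the paper records in an earlier remark, and whose column $m=2$ is A006331), so the two occurrences of $\text{Cil}$ are slips for $\text{Cil}_2$, just as ``AA000384'' is a slip for A000384. The paper itself gives no proof: the remark is an observation read off from the tables, whose nearest justification is the double-sum formula $(\text{cil}_2)_{nm}=\sum_{j=-1}^{n}\sum_{i=-1}^{j}\binom{1+n}{1+j}\binom{j+1}{i+1}\binom{i+1}{m-j}$ obtained from the interior/boundary decomposition $\breve{\text{Cil}}_2{\Gamma_+}[n]$ in Proposition \ref{2cilindroprop}. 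Your route is genuinely different and cleaner: the reduction $\text{Cil}_2{\Gamma_+}[n]={\Gamma_+}[n]\boxplus{\Gamma_+}[n]\cong{\Gamma_+}[2n+1]$ together with monoidality of $|\cdot|$ yields the closed form $|\text{Cil}_2{\Gamma_+}[n]|_m=\binom{2n+2}{m+1}$ for all columns at once, from which both OEIS identities are one-line binomial simplifications; it also explains structurally why every row of the $|\text{Cil}_2{\Gamma_+}[n]|$ table is a row of Pascal's triangle with top index $2n+2$ (and incidentally exposes small typos in that table, e.g.\ in the row $n=4$ the entries at $m=5,6$ should be $210,120$ rather than $120,45$). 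Your Vandermonde alternative is equally valid and amounts to re-deriving the join isomorphism at the level of cardinal sequences. The index bookkeeping you flag is handled correctly, including the boundary case $n=0$ of the first claim, where $|\text{Cil}_2{\Gamma_+}[-1]|_1=0=0\cdot(2\cdot 0-1)$.
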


\subsection{Barycentric subdivision of an augmented semi-simplicial set}\label{geometricsubdivisions}

In this subsection we describe the barycentric subdivision of an augmented semi-simplicial set. For any $[n]\in \mathbf{\Gamma_+}$, we first introduce the barycentric subdivision of ${\Gamma_+}[n]$, denoted by $\text{Sd}{\Gamma_+}[n]$, as the following augmented semi-simplicial set: If $-1\leq m\leq n$, an element $a=(\varphi _{-1},\varphi _0,\cdots, \varphi _m)\in \text{Sd}{\Gamma_+}[n]_m$ is just a chain of composable morphisms in $\mathbf{\Gamma_+}$, of the form
$$\emptyset =[-1]\stackrel{\varphi _{-1}}{\longrightarrow }[k_0]\stackrel{\varphi _0}{\longrightarrow }[k_1]\stackrel{\varphi _1}{\longrightarrow }\cdots
\stackrel{\varphi _{m-1}}{\longrightarrow }[k_m]\stackrel{\varphi _m}{\longrightarrow }[k_{m+1}]=[n]$$
\noindent where $0\leq k_0<k_1<\cdots<k_m\leq n$. We set $\text{Sd}{\Gamma_+}[n]_m=\emptyset $, if $m>n.$

Now, if $m\geq 0$ and $0\leq i\leq m,$ then the face operator $d_i:\text{Sd}{\Gamma_+}[n]_m\rightarrow \text{Sd}{\Gamma_+}[n]_{m-1}$ is given as
$$d_i(a)=d_i((\varphi _{-1},\varphi _0,\cdots, \varphi _m)):=(\varphi _{-1},...,\varphi _{i-2},\varphi _i\circ \varphi _{i-1},\varphi _{i+1},\cdots ,\varphi _m)$$
Observe that $\text{Sd}{\Gamma_+}[n]_{-1}=\{\emptyset= (\emptyset  \to [n])\}.$ In particular, $\text{Sd}{\Gamma_+}[-1]_{-1}=\{\emptyset= (\emptyset  \to [-1])\};$ moreover $\text{Sd}{\Gamma_+}[-1]_{m}=\emptyset $, for all $m$. We also have
$\text{Sd}{\Gamma_+}[0]_{-1}=\{\emptyset= (\emptyset  \to [-1])\},$
$\text{Sd}{\Gamma_+}[0]_{0}=\{ \emptyset  \to [0]\}$ and $\text{Sd}{\Gamma_+}[0]_{m}= \emptyset $, for all $m\geq 1.$
This implies that
\begin{equation}
\label{initialsubdivision}
\text{Sd}{\Gamma_+}[-1]= {\Gamma_+}[-1]\hspace{8pt}\mbox{and}\hspace{8pt} \text{Sd}{\Gamma_+}[0]= {\Gamma_+}[0].
\end{equation}

Then, we can consider the canonical functor
$\text{Sd} \colon {\mathbf{\Gamma}_+}\to \Set^{{\mathbf{\Gamma}_+^{op}}}, \quad \text{Sd} ([n])=\text{Sd}{\Gamma_+}[n]$ and, by Theorem \ref{ext-pr}, the colimit-preserving functor $(-)\widetilde{\vartriangleright}\text{Sd} :\mathbf{Sets}^{\mathbf{\Gamma}_+^{op}}\to \mathbf{Sets}^{\mathbf{\Gamma}_+^{op}}$, making commutative the diagram
$$
\xymatrix{\ar[d]_{\text{Y}} {\mathbf{\Gamma}_+} \ar[r]^{\text{Sd} } & \Set^{{\mathbf{\Gamma}_+^{op}}}\\
\Set^{{\mathbf{\Gamma}_+^{op}}}  \ar[ru]_{(-)\widetilde{\vartriangleright}\text{Sd}}&}
$$ and it has a right adjoint
$$\text{r}_{\text{Sd}} \colon \Set^{{\mathbf{\Gamma}_+^{op}}} \to \Set^{{\mathbf{\Gamma}_+^{op}}}$$

\begin{defi}\label{subd-semi}
For any augmented semi-simplicial set $X\in \Set^{{\mathbf{\Gamma}_+^{op}}}$ we define its \emph{barycentric subdivision} as
$$\text{Sd} (X) =X \tilde{ \vartriangleright} \text{Sd} $$
Similarly, for augmented semi-simplicial maps, $\text{Sd} (f) =f \tilde{ \vartriangleright} \text{Sd}.$
\end{defi}

\medskip
For the next result, we recall that $S_+[n-1]=\Gamma_+[n]\setminus \{\iota _n\},$ where $\iota_n:[n]\rightarrow [n]$ is the identity map.

\begin{prop}\label{ConoSubdivision} Consider the cone funtors $\text{Con}_l, \text{Con}_r  \colon \Set^{{\Gamma_+^{op}}}  \to \Set^{{\Gamma_+^{op}}}$.
Then there are canonical isomorphisms
$$\text{Con}_l (\text{Sd} (S_+[n-1]))\cong \text{Sd}{\Gamma_+}[n]\cong \text{Con}_r (\text{Sd} (S_+[n-1])$$
for $k\geq 0.$
\end{prop}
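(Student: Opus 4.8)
The plan is to work with the explicit combinatorial model of $\text{Sd}{\Gamma_+}[n]$ given by its definition and to exhibit a level-wise bijection that I then check commutes with the face operators. First I would reinterpret the defining chains: an $m$-simplex $a=(\varphi_{-1},\varphi_0,\dots,\varphi_m)$ of $\text{Sd}{\Gamma_+}[n]$, after composing each $\varphi_i$ with the remaining arrows down to $[n]$ and taking images, is the same datum as a flag $F_0\subsetneq F_1\subsetneq\cdots\subsetneq F_m\subseteq[n]$ of nonempty faces of $[n]$ (the unique arrow $\varphi_{-1}$ out of $[-1]$ carries no information). Under this identification the face operator $d_i$ deletes the face $F_i$ from the flag, and the $(-1)$-simplex is the empty flag.

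Next I would pin down $\text{Sd}(S_+[n-1])$. Since $S_+[n-1]=\Gamma_+[n]\setminus\{\iota_n\}$ is the union of the proper faces of $\Gamma_+[n]$, i.e.\ the colimit over its category of elements of the representables $\Gamma_+[k]$ indexed by the non-identity monomorphisms $\sigma\colon[k]\to[n]$, and since $(-)\widetilde{\vartriangleright}\text{Sd}$ preserves colimits and is regular (each $\text{Sd}(\sigma)$ pushes a flag forward injectively, hence is injective in every dimension), the subdivision $\text{Sd}(S_+[n-1])$ is the union inside $\text{Sd}{\Gamma_+}[n]$ of the images $\text{Sd}(\sigma)(\text{Sd}{\Gamma_+}[k])$ over proper faces $\sigma$. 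A flag lies in such an image exactly when its top face is contained in $\mathrm{Im}(\sigma)$; hence, exactly as in the paper's recipe for $\text{Cil}X$ on a subcomplex $X\subseteq\Gamma_+[n]$, this union is the sub-semi-simplicial set $B$ consisting of the flags $F_0\subsetneq\cdots\subsetneq F_m$ of nonempty faces with proper top face $F_m\subsetneq[n]$. Thus $\text{Sd}(S_+[n-1])\cong B$.

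It now suffices to treat the right cone, since commutativity of the join gives $\text{Con}_l(Z)=\Gamma_+[0]\boxplus Z\cong Z\boxplus\Gamma_+[0]=\text{Con}_r(Z)$. Using $(\Gamma_+[0])_{-1}=\{\varepsilon\}$, $(\Gamma_+[0])_0=\{\omega\}$ (with $\omega=\iota_0$) and $(\Gamma_+[0])_q=\emptyset$ for $q\geq1$, the join formula yields $\text{Con}_r(B)_m=(B_m\times\{\varepsilon\})\sqcup(B_{m-1}\times\{\omega\})$. I define $\Phi\colon\text{Con}_r(B)_m\to\text{Sd}{\Gamma_+}[n]_m$ by sending a base pair $(F_0\subsetneq\cdots\subsetneq F_m,\varepsilon)$ to the same proper flag, and a cone pair $(F_0\subsetneq\cdots\subsetneq F_{m-1},\omega)$ to the flag $F_0\subsetneq\cdots\subsetneq F_{m-1}\subsetneq[n]$ obtained by appending the top face. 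Since every flag of $\text{Sd}{\Gamma_+}[n]$ either has proper top face (hitting the base) or has top face $[n]$ (hitting the cone), $\Phi$ is a bijection at every level, including $m=-1$.

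The remaining and principal task is to verify that $\Phi$ is a morphism of augmented semi-simplicial sets, i.e.\ $\Phi\circ d_i=d_i\circ\Phi$. For a base pair the $\Gamma_+[0]$-coordinate $\varepsilon$ is terminal, so $d_i$ only deletes $F_i$ in the $B$-coordinate, matching deletion of $F_i$ in the corresponding proper flag. For a cone pair $(a,\omega)$ with $a=(F_0\subsetneq\cdots\subsetneq F_{m-1})$, the join face formula gives $d_i(a,\omega)=(d_i a,\omega)$ for $0\le i\le m-1$ and $d_m(a,\omega)=(a,d_0\omega)=(a,\varepsilon)$, because $d_0\omega$ is the unique arrow $[-1]\to[0]$; on the $\text{Sd}{\Gamma_+}[n]$ side, deleting $F_i$ from $\Phi(a,\omega)=F_0\subsetneq\cdots\subsetneq F_{m-1}\subsetneq[n]$ reproduces the appended flag of $d_i a$ for $i\le m-1$, while deleting the top face $[n]$ (the case $i=m$) returns the proper flag $a=\Phi(a,\varepsilon)$. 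These are exactly the images of the join faces under $\Phi$, so the square commutes and $\text{Con}_r(\text{Sd}(S_+[n-1]))\cong\text{Sd}{\Gamma_+}[n]$. The main obstacle is the clean identification $\text{Sd}(S_+[n-1])\cong B$ (the colimit-equals-union step); once that is secured, the cone isomorphism reduces to the transparent combinatorial fact that every flag of faces of $[n]$ either avoids or ends at the top face $[n]$, with $d_m$ on the cone part deleting precisely that apex.
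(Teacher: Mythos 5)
Your proof is correct, and it is worth noting that it supplies an argument the paper itself does not really contain: Proposition \ref{ConoSubdivision} is stated without proof, and the chain of isomorphisms in the remark that follows it cannot serve as one, since the step $({\Gamma_+}[0]\boxplus S_+[n-1])\widetilde{\vartriangleright}\text{Sd}\cong{\Gamma_+}[n]\widetilde{\vartriangleright}\text{Sd}$ does not follow from functoriality (${\Gamma_+}[0]\boxplus S_+[n-1]\not\cong{\Gamma_+}[n]$; compare cardinal sequences $(1,3,2,\dots)$ versus $(1,2,1,\dots)$ for $n=1$), and Proposition \ref{superj} does not apply because $\text{Sd}$ is not monoidal for $\boxplus$ (indeed $\text{Sd}{\Gamma_+}[0]\boxplus\text{Sd}{\Gamma_+}[0]\cong{\Gamma_+}[1]\not\cong\text{Sd}{\Gamma_+}[1]$). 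Your route is instead the direct combinatorial one: flags of nonempty faces via Lemma \ref{Sdcadplus}, the splitting of a flag according to whether its top face is proper or equals $[n]$, and an explicit check against the join face formula, including the apex case $d_m(a,\omega)=(a,\varepsilon)$; all of these checks are right, and the cardinal count $\text{con}(|\text{Sd}(S_+[n-1])|)=\text{cad}^+[n]$ corroborates the bijection. The only step that deserves the extra care you already flag is the identification $\text{Sd}(S_+[n-1])\cong B$, i.e.\ that the colimit defining $S_+[n-1]\widetilde{\vartriangleright}\text{Sd}$ maps injectively onto the union of the subdivided proper faces inside $\text{Sd}{\Gamma_+}[n]$; this is exactly the regularity mechanism behind Theorem \ref{tildebigodottildedot}, and it would strengthen the write-up to cite that result (or verify injectivity of the canonical map directly) rather than assert ``colimit equals union'' in passing. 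With that reference made explicit, the argument is complete.
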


\begin{obs}The isomorphisms above can also be written as:
$$\begin{array}{lcl}
{\Gamma_+}[0] \boxplus (\text{Sd} (S_+[n-1])) & = & {\Gamma_+}[0] \boxplus( S_+[n-1]\tilde{\vartriangleright} \text{Sd}) \\
 & \cong & ({\Gamma_+}[0] \boxplus S_+[n-1]) \tilde{\vartriangleright} \text{Sd} \\
 & \cong & {\Gamma_+}[n] \tilde{\vartriangleright} \text{Sd} \\
 & \cong & ( S_+[n-1] \boxplus {\Gamma_+}[0]) \tilde{\vartriangleright} \text{Sd}\Gamma_+ \\
 & \cong  & (\text{Sd} (S_+[n-1]))\boxplus  {\Gamma_+}[0].
\end{array}$$
\end{obs}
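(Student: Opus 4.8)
The plan is to reduce the statement to an explicit, dimensionwise bijection of simplices and then verify that this bijection is natural with respect to the face operators. First I would rewrite the $m$-simplices of $\text{Sd}{\Gamma_+}[n]$ in a more transparent combinatorial form: a chain of composable morphisms $[-1]\to[k_0]\to\cdots\to[k_m]\to[n]$ is the same datum as a strictly increasing chain of nonempty subsets $N_0\subsetneq N_1\subsetneq\cdots\subsetneq N_m\subseteq[n]$, where $N_i$ is the image in $[n]$ of the composite $[k_i]\to[n]$, together with the single empty chain $*$ in dimension $-1$. Under this description the face operator $d_i$ (for $0\le i\le m$) is exactly the deletion of $N_i$, since composing $\varphi_i\circ\varphi_{i-1}$ erases the object $[k_i]$. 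Because $(-)\widetilde{\vartriangleright}\text{Sd}$ preserves colimits (Theorem \ref{ext-pr}) and carries each proper-face inclusion to an injection, and since $S_+[n-1]=\Gamma_+[n]\setminus\{\iota_n\}$ is the union of the proper faces of $\Gamma_+[n]$, the subobject $\text{Sd}(S_+[n-1])\hookrightarrow\text{Sd}{\Gamma_+}[n]$ is identified with the sub-semi-simplicial set of those chains lying in some proper facet, that is, the chains with $N_m\neq[n]$.

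Next I would make the cone explicit. Writing $\text{Con}_r(Y)=Y\boxplus{\Gamma_+}[0]$ and using ${\Gamma_+}[0]_{-1}={\Gamma_+}[0]_{0}=\{*\}$ while ${\Gamma_+}[0]_p=\emptyset$ for $p\ge1$, the join face formula gives $\text{Con}_r(Y)_m=Y_m\sqcup Y_{m-1}$, a \emph{base} copy and a \emph{cone} copy. I would then define $\Phi_m\colon\text{Con}_r(\text{Sd}(S_+[n-1]))_m\to\text{Sd}{\Gamma_+}[n]_m$ by sending a base element $(N_0\subsetneq\cdots\subsetneq N_m)$ to itself, and a cone element $(N_0\subsetneq\cdots\subsetneq N_{m-1})$ to the extended chain $(N_0\subsetneq\cdots\subsetneq N_{m-1}\subsetneq[n])$. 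This is a bijection for the simple reason that every chain in $\text{Sd}{\Gamma_+}[n]_m$ either consists entirely of proper subsets or has $N_m=[n]$, and in the latter case deleting $[n]$ recovers a unique proper chain.

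The heart of the argument, and the step I expect to be most delicate, is checking that $\Phi$ commutes with every $d_i$. On base elements $d_i$ deletes $N_i$ and the result is still a chain of proper subsets, matching the base-to-base face operator of the cone. On a cone element the bookkeeping must be done carefully: for $0\le i\le m-1$ the join formula makes $d_i$ act as the face operator of the subdivided sphere (delete $N_i$), which corresponds exactly to deleting $N_i$ from $(N_0\subsetneq\cdots\subsetneq N_{m-1}\subsetneq[n])$ while leaving the top subset $[n]$ in place; whereas the last face $d_m$ is the apex-removal map $(y,v)\mapsto(y,*)$, landing in the base copy, and this corresponds precisely to deleting the top subset $N_m=[n]$ from the extended chain. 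Matching these two descriptions index by index shows that $\Phi$ is an isomorphism of augmented semi-simplicial sets, hence $\text{Con}_r(\text{Sd}(S_+[n-1]))\cong\text{Sd}{\Gamma_+}[n]$.

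Finally, for the left cone I would not repeat the construction, since a naive appending of $[n]$ fails here: in $\text{Con}_l(Y)={\Gamma_+}[0]\boxplus Y$ the apex is the first vertex, whereas the full set $[n]$ is forced to sit at the end of an increasing chain, producing an index mismatch already on $d_0$. Instead I would invoke the symmetry isomorphism ${\Gamma_+}[0]\boxplus Y\cong Y\boxplus{\Gamma_+}[0]$ of the symmetric monoidal join, which yields a natural isomorphism $\text{Con}_l(Y)\cong\text{Con}_r(Y)$, and compose it with the isomorphism just constructed to obtain $\text{Con}_l(\text{Sd}(S_+[n-1]))\cong\text{Sd}{\Gamma_+}[n]$. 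The only genuine obstacle is the face-operator verification of the third paragraph; everything else is a matter of unwinding the definitions.
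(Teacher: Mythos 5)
Your explicit bijection is correct, and it is a genuinely different argument from the one the remark suggests: the remark tries to obtain Proposition \ref{ConoSubdivision} by a purely formal manipulation of the action $\widetilde{\vartriangleright}$, whereas you identify $\text{Sd}{\Gamma_+}[n]_m$ with chains $N_0\subsetneq\cdots\subsetneq N_m\subseteq [n]$ (this is Lemma \ref{Sdcadplus}), split such a chain according to whether $N_m=[n]$, and match the two cases with the base and cone copies in $\text{Con}_r(\text{Sd}(S_+[n-1]))_m=\text{Sd}(S_+[n-1])_m\sqcup\text{Sd}(S_+[n-1])_{m-1}$. The face-operator verification, including the apex face $d_m$ landing in the base copy, is exactly the delicate point and you handle it correctly; passing to $\text{Con}_l$ by the symmetry of $\boxplus$ is legitimate.

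Be aware, however, that what you have proved is the composite assertion (the first, third and fifth expressions of the display are mutually isomorphic), not the chain as literally written, and the two intermediate links cannot be supplied. The step ${\Gamma_+}[0]\boxplus(S_+[n-1]\widetilde{\vartriangleright}\text{Sd})\cong({\Gamma_+}[0]\boxplus S_+[n-1])\widetilde{\vartriangleright}\text{Sd}$ would require $(-)\widetilde{\vartriangleright}\text{Sd}$ to be monoidal, i.e.\ Proposition \ref{superj} applied to $Z=\text{Sd}$; but $\text{Sd}$ is not a monoidal functor, since $|\text{Sd}{\Gamma_+}[1]|=(1,3,2,0,\dots)$ while $|\text{Sd}{\Gamma_+}[0]\boxplus\text{Sd}{\Gamma_+}[0]|=|{\Gamma_+}[1]|=(1,2,1,0,\dots)$. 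Likewise $({\Gamma_+}[0]\boxplus S_+[n-1])\widetilde{\vartriangleright}\text{Sd}\cong{\Gamma_+}[n]\widetilde{\vartriangleright}\text{Sd}$ fails, because ${\Gamma_+}[0]\boxplus S_+[n-1]$ is the cone on the boundary and is not isomorphic to ${\Gamma_+}[n]$: for $n=1$ it is a path with three vertices, and after subdividing one gets $(1,5,4,0,\dots)$ against $(1,3,2,0,\dots)$. So your direct construction is not merely an alternative route but the one that actually works; you should state explicitly that the second and fourth expressions of the displayed chain must be removed, only the outer identifications established by your map $\Phi$ being valid.
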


\subsection{Cylinders for integer sequences}

\subsubsection{The standard cylinder for integer sequences}

Associated with the augmented semi-simplicial sets $\text{Cil} \Gamma_+[n]$, $\text{Cil} \partial \Gamma_+[n]$, we consider the following augmented  integer sequences:

\begin{defi}\label{cilinders} The augmented sequence $ \breve {\text{cil}}{\gamma_+[n]}\in {\mathbf{Z}}^{{\mathbb{N}_+^{op}}}$ is given by
\begin{equation}
\label{brevecil}
\breve{\text{cil}}{\gamma_+}[n]_m:=
\left\{
\begin{array}{lll}
n+2 & \mbox{if $m=n $},\\ [1pc]
n+1 & \mbox{ if $m=n+1$}\\ [1pc]
0 & \mbox{ if $m\not \in \{n, n+1\}$}
\end{array} \right.
\end{equation}
and the augmented matrix $\breve{\text{cil}}=(\breve{\text{cil}}_{nm})$ by $\breve{\text{cil}}_{nm}:=\breve{\text{cil}}{\gamma_+}[n]_m.$

The augmented  sequences  $\text{cil}\partial{\gamma_+}[n],  \text{cil}{\gamma_+}[n] \in {\mathbf{Z}}^{{\mathbb{N}_+^{op}}}$ are defined as
\begin{equation}
\label{partialcil}
 \text{cil}\partial{\gamma_+}[n]:= \sum_{j=-1}^{n-1} \binom{1+n}{1+j}  \breve{\text{cil}}{\gamma_+}[j]
\end{equation}
\begin{equation}
\label{cil}
 \text{cil}{\gamma_+}[n]:=\sum_{j=-1}^{n} \binom{1+n}{1+j}  \breve{\text{cil}}{\gamma_+}[j]
\end{equation}
and the augmented matrices $ \text{cil}\partial= (\text{cil}\partial_{nm})$ , $ \text{cil}=( \text{cil}_{nm})$ by
$\text{cil}\partial_{nm}:= \text{cil}\partial{\gamma_+}[n]_m,$  $ \text{cil}_{nm}:= \text{cil}{\gamma_+}[n]_m$.
\end{defi}

Recall that, for $k \in \mathbb{N}_+$,
${\mathbf{1}}_{k}$ denotes the augmented sequence given by $({\mathbf{1}}_{k})_i=\delta_{k, i}$.
For $k,l \in  \mathbb{N}_+$, we also consider the augmented matrices ${\mathbf{1}}_{k,l}$ where $({\mathbf{1}}_{k,l})_{i,j}:= \delta_{k, i} \delta_{l, j}$

\begin{prop} The augmented matrices $\breve{\text{cil}}$,   $\text{cil}\partial$, $\text{cil}$ can be described as follows:
\begin{itemize}
  \item[(i)] $\breve{\text{cil}}_{nm}= (n+2)   \delta_{n, m}+(n+1) \delta_{n+1, m}$
  \item[(ii)]  $\text{cil}\partial_{nm}= \sum_{j=-1}^{n-1} \binom{1+n}{1+j}  ( (j+2)   \delta_{j, m}+(j+1) \delta_{j+1, m})$
  \item[(iii)]  $\text{cil}_{nm}= \sum_{j=-1}^{n} \binom{1+n}{1+j}  ( (j+2)   \delta_{j, m}+(j+1) \delta_{j+1, m})$
\end{itemize}
\end{prop}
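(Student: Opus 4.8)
The plan is to prove all three identities by directly unwinding the defining formulas in Definition \ref{cilinders} and rewriting the resulting case distinctions in terms of Kronecker deltas. This is a bookkeeping argument in which linearity of the coordinate functionals does the essential work; no genuine computation is required.

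First I would treat (i). By definition $\breve{\text{cil}}_{nm}=\breve{\text{cil}}{\gamma_+}[n]_m$, so it suffices to check that the three-case formula \eqref{brevecil} coincides with $(n+2)\delta_{n,m}+(n+1)\delta_{n+1,m}$. Since $n\neq n+1$, the deltas $\delta_{n,m}$ and $\delta_{n+1,m}$ are never simultaneously nonzero; hence the right-hand side equals $n+2$ exactly when $m=n$, equals $n+1$ exactly when $m=n+1$, and vanishes otherwise. Matching these three cases against \eqref{brevecil} gives (i) at once.

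Next I would handle (ii) and (iii) in parallel. Both $\text{cil}\partial{\gamma_+}[n]$ and $\text{cil}{\gamma_+}[n]$ are defined in \eqref{partialcil} and \eqref{cil} as finite $\mathbb{Z}$-linear combinations $\sum_j\binom{1+n}{1+j}\,\breve{\text{cil}}{\gamma_+}[j]$, differing only in the upper summation limit ($n-1$ versus $n$). Because the $m$-th coordinate functional on $\mathbf{Z}^{\mathbb{N}_+^{op}}$ is additive and respects integer scalars, evaluating at coordinate $m$ yields
$$\text{cil}\partial_{nm}=\sum_{j=-1}^{n-1}\binom{1+n}{1+j}\,\breve{\text{cil}}{\gamma_+}[j]_m=\sum_{j=-1}^{n-1}\binom{1+n}{1+j}\,\breve{\text{cil}}_{jm},$$
and analogously for $\text{cil}_{nm}$ with upper limit $n$. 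Substituting the already-proven formula (i), in the form $\breve{\text{cil}}_{jm}=(j+2)\delta_{j,m}+(j+1)\delta_{j+1,m}$, into these two sums produces precisely the expressions claimed in (ii) and (iii).

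I do not expect any serious obstacle. The statement is a direct reformulation of the definitions, and the only point deserving a moment's attention is the disjointness of the two Kronecker deltas in (i), which guarantees that $m$ cannot be counted twice. Beyond that, one need only transcribe the summation ranges from \eqref{partialcil} and \eqref{cil} faithfully.
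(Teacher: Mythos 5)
Your proposal is correct and matches the paper's argument: the paper likewise observes that $\breve{\text{cil}}{\gamma_+}[j]_m=(j+2)\delta_{j,m}+(j+1)\delta_{j+1,m}$ follows from the defining case distinction and then substitutes this into the linear combinations defining $\text{cil}\partial{\gamma_+}[n]$ and $\text{cil}{\gamma_+}[n]$ (the paper only writes out case (iii), leaving the others implicit). Your version is just a slightly more explicit transcription of the same bookkeeping.
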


\begin{proof}  (iii)  Since
 $\breve{\text{cil}}{\gamma_+}[j]_m=( (j+2)   \delta_{j, m}+(j+1) \delta_{j+1, m})$
 we have
 $\text{cil}{\gamma_+}[n]_m= \sum_{j=-1}^{n} \binom{1+n}{1+j}  \breve{\text{cil}}{\gamma_+}[j]_m= \sum_{j=-1}^{n} \binom{1+n}{1+j}  ( (j+2)   \delta_{j, m}+(j+1) \delta_{j+1, m})$.

\end{proof}

\begin{prop}\label{cilCil}
The following equalities hold true:
\begin{equation}
\label{Cilcil+}
 \text{cil}\gamma_+[n]=|\text{Cil} ({\Gamma_+}[n])|, \quad   \text{cil}=|\text{Cil}|
\end{equation}
 where $\text{Cil}$ is the co-semi-simplicial object given in subsection \ref{geometriccylinders}
\end{prop}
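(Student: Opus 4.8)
The plan is to reduce the proposition to the geometric counting already carried out in Proposition \ref{sucesioncardinalCil}, by matching the purely algebraic definitions of Definition \ref{cilinders} term-by-term against the cardinal sequences of the corresponding augmented semi-simplicial sets. In essence, once Proposition \ref{sucesioncardinalCil} is available, this result is a bookkeeping identity: the sequences $\breve{\text{cil}}\gamma_+[n]$, $\text{cil}\partial\gamma_+[n]$, $\text{cil}\gamma_+[n]$ were \emph{defined} by the very same formulas that Proposition \ref{sucesioncardinalCil} proves for the geometric cardinals.

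First I would compare the defining formula \eqref{brevecil} for $\breve{\text{cil}}\gamma_+[n]$ with Proposition \ref{sucesioncardinalCil}(i). Both are supported on $m\in\{n,n+1\}$, taking the values $n+2$ and $n+1$ respectively, and vanishing elsewhere. Hence, for every $n\in\mathbb{N}_+$, one has the identity $\breve{\text{cil}}\gamma_+[n]=|\breve{\text{Cil}}\Gamma_+[n]|$ as augmented sequences. This is the only genuine input from the geometric side beyond the recursive assembly.

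Next, starting from the definition \eqref{cil}, I would split off the top summand $j=n$ and use $\binom{n+1}{n+1}=1$ to write
$$\text{cil}\gamma_+[n]=\sum_{j=-1}^{n-1}\binom{n+1}{j+1}\,\breve{\text{cil}}\gamma_+[j]+\breve{\text{cil}}\gamma_+[n].$$
Substituting $\breve{\text{cil}}\gamma_+[j]=|\breve{\text{Cil}}\Gamma_+[j]|$ and invoking Proposition \ref{sucesioncardinalCil}(ii) to recognize the remaining sum as $|\text{Cil}\partial\Gamma_+[n]|$, the right-hand side becomes $|\text{Cil}\partial\Gamma_+[n]|+|\breve{\text{Cil}}\Gamma_+[n]|$, which is precisely $|\text{Cil}\Gamma_+[n]|$ by Proposition \ref{sucesioncardinalCil}(iii). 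This establishes the first equality $\text{cil}\gamma_+[n]=|\text{Cil}(\Gamma_+[n])|$. As a byproduct the same substitution into \eqref{partialcil} yields $\text{cil}\partial\gamma_+[n]=|\text{Cil}\partial\Gamma_+[n]|$, should it be needed elsewhere.

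Finally, the matrix equality $\text{cil}=|\text{Cil}|$ follows by assembling rows. By Definition \ref{cilinders} the $n$-th row of $\text{cil}$ is $\text{cil}\gamma_+[n]$, whereas $|\text{Cil}|$ is, by the construction $|Z|=|\cdot|\circ Z\circ\text{in}$ recalled in the previous section, the augmented matrix whose $n$-th row is $|\text{Cil}([n])|=|\text{Cil}\Gamma_+[n]|$; the two rows coincide by the first equality, so the matrices agree entrywise. I do not anticipate any real obstacle: the entire combinatorial content resides in Proposition \ref{sucesioncardinalCil}, and the argument is an alignment of index conventions. The single point that requires care is the passage between the boundary sum (ranging up to $n-1$) and the full sum (ranging up to $n$), which is handled cleanly by the identity $\binom{n+1}{n+1}=1$ absorbing $\breve{\text{cil}}\gamma_+[n]$ as the top term.
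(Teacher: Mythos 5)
Your proposal is correct and follows exactly the route the paper takes: the paper's own proof simply cites Proposition \ref{sucesioncardinalCil} together with Definition \ref{cilinders}, and your argument is a careful unpacking of that same comparison, term by term and row by row. No gaps; the only substantive input is the match between formula \eqref{brevecil} and Proposition \ref{sucesioncardinalCil}(i), which you identify correctly.
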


\begin{proof} It is a direct consequence of Proposition \ref{sucesioncardinalCil} and Definition \ref{cilinders}.
\end{proof}

\begin{defi} Given an augmented integer sequence $ a \in {\bf Z}^{\mathbb{N}_+^{op}} $, the \emph{standard cylinder of
$a$} is the sequence obtained as $\text{cil} (a):=a \tilde{\triangleright} \text{cil}$. This construction gives rise to a functor
$$\text{cil} \colon {\bf Z}^{\mathbb{N}_+^{op}} \to {\bf Z}^{\mathbb{N}_+^{op}}$$
\end{defi}

\begin{obs}
Observe that $\text{bin}^{-1} \cdot  \text{cil}=\breve{\text{cil}}$ and therefore $a \tilde{\triangleright} \text{cil}$ has sense, for all $a \in {\bf Z}^{\mathbb{N}_+^{op}}.$
\end{obs}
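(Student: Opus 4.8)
The plan is to read the defining relation (\ref{cil}) of the cylinder matrix as a single matrix product by the binomial matrix and then to invert that product. First I would take the $m$-th component of
$$\text{cil}{\gamma_+}[n]=\sum_{j=-1}^{n}\binom{1+n}{1+j}\,\breve{\text{cil}}{\gamma_+}[j],$$
which gives
$$\text{cil}_{nm}=\sum_{j=-1}^{n}\binom{n+1}{j+1}\,\breve{\text{cil}}_{jm}=\sum_{j\in\mathbb{N}_+}\text{bin}_{n,j}\,\breve{\text{cil}}_{jm},$$
where enlarging the summation range is harmless since $\text{bin}_{n,j}=\binom{n+1}{j+1}=0$ whenever $j>n$. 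The right-hand side is exactly $(\text{bin}\cdot\breve{\text{cil}})_{nm}$, so that $\text{cil}=\text{bin}\cdot\breve{\text{cil}}$.

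Next I would left-multiply by $\text{bin}^{-1}$ and use that it is the (two-sided) inverse of $\text{bin}$ --- this is the classical signed binomial inversion, already recorded in the definition of $\text{bin}^{-1}$ and in the identity $a\,\widetilde{\triangleright}\,\text{bin}=a$ noted after Definition \ref{algebraicaction} --- to obtain
$$\text{bin}^{-1}\cdot\text{cil}=\text{bin}^{-1}\cdot(\text{bin}\cdot\breve{\text{cil}})=(\text{bin}^{-1}\cdot\text{bin})\cdot\breve{\text{cil}}=\breve{\text{cil}}.$$
The only delicate point is the legitimacy of associating these infinite matrix products; this is justified because $\text{bin}$ and $\text{bin}^{-1}$ are lower triangular, so for each fixed row the defining sums range over only the finitely many columns with $j\leq n$, every entry is a finite sum, and the rearrangement is valid.

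Finally, for the assertion that $a\,\widetilde{\triangleright}\,\text{cil}$ has sense for an arbitrary sequence $a\in\mathbf{Z}^{\mathbb{N}_+^{op}}$, I would invoke the extension criterion in the Remark following Definition \ref{algebraicaction}: the tilde-triangle product extends to a general $a$ precisely when $\text{bin}^{-1}\cdot\text{cil}$ has columns that are eventually $0$. By the computation above this matrix equals $\breve{\text{cil}}$, and from part (i) of the preceding Proposition its entries $\breve{\text{cil}}_{nm}=(n+2)\delta_{n,m}+(n+1)\delta_{n+1,m}$ vanish unless $n=m$ or $n=m-1$; hence each column $m$ carries at most two nonzero entries and $\breve{\text{cil}}\in((\mathbf{Z}^{\mathbb{N}_+})_{\text{fin}})^{\mathbb{N}_+^{op}}$. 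Consequently $a\,\widetilde{\triangleright}\,\text{cil}=a\cdot(\text{bin}^{-1}\cdot\text{cil})=a\cdot\breve{\text{cil}}$ is well defined for every $a$. Overall this is a direct computation, and the main (mild) obstacle is merely book-keeping: ensuring the infinite products are associative, which the triangularity of $\text{bin}^{\pm1}$ guarantees.
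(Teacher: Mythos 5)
Your argument is correct and is essentially the one the paper intends: reading Definition \ref{cilinders} as the matrix identity $\text{cil}=\text{bin}\cdot\breve{\text{cil}}$, cancelling $\text{bin}$ by signed binomial inversion (associativity being harmless by triangularity), and then checking the column-finiteness of $\breve{\text{cil}}$ against the extension criterion in the remark after Definition \ref{algebraicaction}. This matches the pattern the paper makes explicit for the analogous identity $\text{bin}^{-1}\cdot\text{cad}^+=\breve{\text{cad}}^+$, so no further comparison is needed.
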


\begin{cor}  Given an augmented integer sequence $a \in {\bf Z}^{\mathbb{N}_+^{op}}$, its standard cylinder can be described as
$ \text{cil} (a)= a \cdot \breve{\text{cil}}.$
\end{cor}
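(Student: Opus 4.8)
The plan is to establish the identity by unwinding the definitions, since it is an immediate consequence of the algebraic action once one records that $\text{bin}^{-1}\cdot\text{cil}=\breve{\text{cil}}$. First I would observe that this last relation is just the matrix reformulation of the defining formula (\ref{cil}): writing $\text{cil}{\gamma_+}[n]=\sum_{j=-1}^{n}\binom{1+n}{1+j}\breve{\text{cil}}{\gamma_+}[j]$ in matrix terms gives $\text{cil}=\text{bin}\cdot\breve{\text{cil}}$ (the upper summation bound $n$ may be replaced by $\infty$ since $\binom{n+1}{j+1}=0$ for $j>n$), and multiplying on the left by $\text{bin}^{-1}$ yields $\text{bin}^{-1}\cdot\text{cil}=\breve{\text{cil}}$.

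Next I would check that the extended form of the tilde-triangle product is legitimate for an arbitrary sequence $a\in{\bf Z}^{\mathbb{N}_+^{op}}$. By Definition \ref{cilinders} the matrix $\breve{\text{cil}}$ has entries $\breve{\text{cil}}_{nm}=(n+2)\delta_{n,m}+(n+1)\delta_{n+1,m}$, so in each column $m$ at most the two entries with $n=m$ and $n=m-1$ are nonzero. Hence every column of $\breve{\text{cil}}=\text{bin}^{-1}\cdot\text{cil}$ is eventually $0$, i.e. $\text{bin}^{-1}\cdot\text{cil}\in(({\mathbf{Z}^{{\mathbb{N}_+}}})_{\text{fin}})^{{\mathbb{N}^{op}_+}}$, which is exactly the condition under which the extended action $a\,\widetilde{\triangleright}\,B=a\cdot(\text{bin}^{-1}\cdot B)$ applies to a general $a$.

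With these two points in hand the computation closes in one line:
$$\text{cil}(a)=a\,\widetilde{\triangleright}\,\text{cil}=a\cdot(\text{bin}^{-1}\cdot\text{cil})=a\cdot\breve{\text{cil}}.$$
For a finite $a$ one could alternatively use the original Definition \ref{algebraicaction}, namely $a\,\widetilde{\triangleright}\,\text{cil}=(a\cdot\text{bin}^{-1})\cdot\text{cil}$, and appeal to associativity of the matrix product (valid since $a$ has finite support), obtaining the same result. There is no substantial obstacle here: the only step that warrants attention is verifying the eventual vanishing of the columns of $\breve{\text{cil}}$, which guarantees that $\text{cil}(a)=a\cdot\breve{\text{cil}}$ is well defined for every $a$, finite or not.
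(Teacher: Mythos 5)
Your proposal is correct and follows essentially the same route as the paper, which records exactly the two facts you verify (that $\text{bin}^{-1}\cdot\text{cil}=\breve{\text{cil}}$ and that this makes $a\,\widetilde{\triangleright}\,\text{cil}$ well defined for arbitrary $a\in{\bf Z}^{\mathbb{N}_+^{op}}$) in the remark immediately preceding the corollary and then treats the identity $\text{cil}(a)=a\cdot\breve{\text{cil}}$ as the resulting one-line consequence. Your explicit check that the columns of $\breve{\text{cil}}$ are eventually zero is a worthwhile detail the paper leaves implicit.
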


\begin{obs} Note that the functor $\text{cil} \colon {\bf Z}^{\mathbb{N}_+^{op}} \to {\bf Z}^{\mathbb{N}_+^{op}}$ has an inverse functor $\text{cil}^{-1} \colon {\bf Z}^{\mathbb{N}_+^{op}} \to {\bf Z}^{\mathbb{N}_+^{op}}$,
$ \text{cil}^{-1} (a)= a \cdot (\breve{\text{cil}})^{-1} $,  $ a \in {\bf Z}^{\mathbb{N}_+^{op}} $. For $k \in \mathbb{Z}$,  one can also consider the iteration functor  $\text{cil}^{k}$ given by $ \text{cil}^{k} (a)= a \cdot (\breve{\text{cil}})^{k} $,  $ a \in {\bf Z}^{\mathbb{N}_+^{op}} $.
\end{obs}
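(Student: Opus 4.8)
The plan is to reduce the whole statement to the invertibility of the single augmented matrix $\breve{\text{cil}}$. By the corollary immediately preceding this remark we already know that $\text{cil}(a)=a\cdot\breve{\text{cil}}$, and by the remark just before it that $\breve{\text{cil}}=\text{bin}^{-1}\cdot\text{cil}$, so the functor $\text{cil}$ is nothing but right multiplication by $\breve{\text{cil}}$. Hence everything follows once I exhibit a two-sided inverse matrix $\breve{\text{cil}}^{-1}$ for which all the products in sight are well defined. The crucial structural input is part (i) of the proposition describing $\breve{\text{cil}}$: since $\breve{\text{cil}}_{nm}=(n+2)\delta_{n,m}+(n+1)\delta_{n+1,m}$, the matrix is upper bidiagonal, i.e. upper triangular with diagonal entries $\breve{\text{cil}}_{nn}=n+2$ and super-diagonal entries $\breve{\text{cil}}_{n,n+1}=n+1$.

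First I would construct the inverse by triangular back-substitution. Solving $a\cdot\breve{\text{cil}}=b$ columnwise gives, for each $m\in\mathbb{N}_+$, the relation $(m+2)\,a_m+m\,a_{m-1}=b_m$ (with the convention $a_{-2}=0$), which determines $a$ uniquely and recursively from $b$ by $a_{-1}=b_{-1}$ and $a_m=\bigl(b_m-m\,a_{m-1}\bigr)/(m+2)$. Reading off this recursion as right multiplication $a=b\cdot\breve{\text{cil}}^{-1}$ yields an upper triangular matrix $\breve{\text{cil}}^{-1}$ with $(\breve{\text{cil}}^{-1})_{nn}=1/(n+2)$, and I would then check directly that $\breve{\text{cil}}\cdot\breve{\text{cil}}^{-1}$ and $\breve{\text{cil}}^{-1}\cdot\breve{\text{cil}}$ both equal the identity augmented matrix. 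The point that makes these infinite products legitimate is local finiteness: an upper triangular augmented matrix has each column supported on the finitely many rows $-1,\dots,m$, so every entry of a product is a finite sum, the class of upper triangular locally finite matrices is closed under the (associative) matricial product, and in particular every power $\breve{\text{cil}}^{k}$ with $k\in\mathbb{Z}$ is again upper triangular. Consequently $a\cdot\breve{\text{cil}}^{k}$ is defined for an arbitrary sequence $a\in\mathbf{Z}^{\mathbb{N}_+^{op}}$, exactly as permitted by the extended dot product of the earlier remark.

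With the matrix in hand, the functorial claims become formal. Setting $\text{cil}^{-1}(a):=a\cdot\breve{\text{cil}}^{-1}$, associativity of the dot product gives $\text{cil}^{-1}(\text{cil}(a))=(a\cdot\breve{\text{cil}})\cdot\breve{\text{cil}}^{-1}=a\cdot(\breve{\text{cil}}\cdot\breve{\text{cil}}^{-1})=a$ and symmetrically $\text{cil}(\text{cil}^{-1}(a))=a$, so $\text{cil}^{-1}$ is a genuine inverse functor. The iteration functor $\text{cil}^{k}(a)=a\cdot\breve{\text{cil}}^{k}$ then follows by an immediate induction on $k\geq 0$ and by composing with $\text{cil}^{-1}$ for $k<0$, again using only associativity of the products, which is available since all factors are upper triangular and locally finite.

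The main obstacle is arithmetic rather than combinatorial: the diagonal entries $n+2$ of $\breve{\text{cil}}$ are not units in $\mathbb{Z}$, so the recursion above forces denominators and $\breve{\text{cil}}^{-1}$ genuinely has non-integer entries such as $(\breve{\text{cil}}^{-1})_{00}=1/2$. Thus, taken literally over $\mathbb{Z}$, the map $\text{cil}$ is injective but not surjective, and the inverse functor must be understood over $\mathbb{Q}$ (equivalently, after localizing $\mathbb{Z}$ at the integers $n+2$); I would therefore state the result in that coefficient ring, noting that $\text{cil}$ itself, and each $\breve{\text{cil}}^{k}$ with $k\geq 0$, remains integral. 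The only remaining point requiring care is the bookkeeping that all the infinite matrix products genuinely associate, which is precisely what the upper triangular local finiteness established above guarantees.
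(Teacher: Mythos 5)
The paper offers no proof of this remark, so there is nothing to compare your argument against; on its own terms your reconstruction is correct, and the substantive point you raise is a defect of the statement itself rather than of your proof. Your reduction to invertibility of the upper bidiagonal matrix $\breve{\text{cil}}$, the columnwise recursion $(m+2)\,a_m+m\,a_{m-1}=b_m$, and the appeal to local finiteness of upper triangular augmented matrices to make all the infinite products associate are exactly what is needed. Your integrality objection is correct and decisive: the equation for column $0$ reads $2a_0=b_0$, so $\text{cil}$ is injective but not surjective on $\mathbf{Z}^{\mathbb{N}_+^{op}}$, and $(\breve{\text{cil}})^{-1}$ genuinely has non-integer entries, e.g.\ $(\breve{\text{cil}}^{-1})_{0,0}=1/2$ and $(\breve{\text{cil}}^{-1})_{0,1}=-1/6$. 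Hence the asserted inverse functor, and the negative iterates $\text{cil}^{k}$ with $k<0$, exist only after extending coefficients to $\mathbb{Q}$ (or after restricting to the image of $\text{cil}$), while the nonnegative powers remain integral. The same caveat applies to the claim in the introduction that $\breve{\text{cil}}$, $\breve{\text{cil}}_0$, $\breve{\text{cil}}_2$ induce continuous automorphisms of $\Pi_{i=-1}^{\infty}\mathbb{Z}$: for instance $\breve{\text{cil}}_0$ is the diagonal matrix with entries $n+2$, which gives a continuous monomorphism but not an automorphism of that group. Your proposed repair, stating the remark over $\mathbb{Q}$ and noting that $\text{cil}$ itself and all $\breve{\text{cil}}^{k}$ with $k\geq 0$ stay integral, is the right one.
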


\subsubsection{The 0-cylinder for integer sequences}
This subsection will follow a similar structure to the previous one for standard cylinders of sequences. Indeed,
associated with $\text{Cil}_0  \Gamma_+[n]$, $\text{Cil}_0  \partial \Gamma_+[n]$, we consider the following augmented integer sequences;

\begin{defi}\label{cilinderscero} The sequence  $ \breve {\text{cil}}_0 \gamma_+[n]\in {\mathbf{Z}}^{{\mathbb{N}_+^{op}}}$ is given by
\begin{equation}
\label{brevecil}
\breve{\text{cil}}_0{\gamma_+}[n]_m= (n+2) \delta_{n,m}
\end{equation}
and the augmented matrix $\breve{\text{cil}}_0=((\breve{\text{cil}}_0)_{nm})$ is given by $(\breve{\text{cil}}_0)_{nm}=\breve{\text{cil}}_0{\gamma_+}[n]_m.$

The sequences  $  \text{cil}_0\partial{\gamma_+}[n],  \text{cil}_0{\gamma_+}[n] \in {\mathbf{Z}}^{{\mathbb{N}_+^{op}}}$ are defined by
\begin{equation}
\label{partialcil}
 \text{cil}_0\partial{\gamma_+}[n]= \sum_{j=-1}^{n-1} \binom{1+n}{1+j}  \breve{\text{cil}}_0{\gamma_+}[j]
\end{equation}
\begin{equation}
\label{cil}
 \text{cil}_0{\gamma_+}[n]= \sum_{j=-1}^{n} \binom{1+n}{1+j}  \breve{\text{cil}}_0{\gamma_+}[j]
\end{equation}
and the augmented matrices $ \text{cil}_0\partial= ((\text{cil}_0\partial)_{nm})$ , $ \text{cil}_0=( (\text{cil}_0)_{nm})$ are given by
 $ (\text{cil}_0\partial)_{nm}= \text{cil}_0\partial{\gamma_+}[n]_m,$  $ (\text{cil}_0)_{nm}= \text{cil}_0{\gamma_+}[n]_m.$
\end{defi}

\begin{prop} The augmented matrices $\breve{\text{cil}}_0$,   $\text{cil}_0\partial$, $\text{cil}_0$ can be described as follows:
\begin{itemize}
  \item[(i)] $(\breve{\text{cil}}_0)_{nm}= (n+2)   \delta_{n, m},$
  \item[(ii)]  $(\text{cil}_0\partial)_{nm}= \sum_{j=-1}^{n-1} \binom{1+n}{1+j}   (j+2)   \delta_{j, m},$
  \item[(iii)]  $(\text{cil}_0)_{nm}= \sum_{j=-1}^{n} \binom{1+n}{1+j}  (j+2)   \delta_{j, m}.$
\end{itemize}
\end{prop}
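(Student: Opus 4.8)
The plan is to prove all three identities by directly unwinding the definitions recorded in Definition \ref{cilinderscero}, exactly as was done for the standard cylinder in the preceding proposition. No auxiliary machinery is required: every statement reduces to reading off the $m$-th component of the augmented sequences involved, together with the elementary fact that component extraction is linear because all operations in $\mathbf{Z}^{\mathbb{N}_+^{op}}$ are performed pointwise.

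First I would dispatch (i). By definition the matrix entry $(\breve{\text{cil}}_0)_{nm}$ is precisely the $m$-th component $\breve{\text{cil}}_0{\gamma_+}[n]_m$, and that component equals $(n+2)\delta_{n,m}$ straight from the defining formula in Definition \ref{cilinderscero}. Thus (i) is nothing more than a restatement of the definition. For (ii) and (iii) I would then substitute this expression into the defining sums. Taking the $m$-th component commutes with the finite linear combination, so
$$(\text{cil}_0\partial)_{nm}=\text{cil}_0\partial{\gamma_+}[n]_m=\sum_{j=-1}^{n-1}\binom{1+n}{1+j}\,\breve{\text{cil}}_0{\gamma_+}[j]_m=\sum_{j=-1}^{n-1}\binom{1+n}{1+j}(j+2)\delta_{j,m},$$
and the identical computation with upper limit $n$ in place of $n-1$ yields (iii). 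This is the verbatim analogue of the one-line argument used for $\text{cil}$, so I would keep the write-up equally terse.

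The only point requiring any care is the bookkeeping of the augmentation index, which starts at $-1$, together with the convention $\binom{1+n}{1+j}=0$ whenever $1+j>1+n$. Since the summation ranges already stop at $j=n-1$ (respectively $j=n$), this boundary convention is never actually invoked, and the Kronecker delta $\delta_{j,m}$ collapses each sum to a single surviving term whenever $-1\le m\le n-1$ (respectively $-1\le m\le n$), giving $0$ otherwise. I therefore do not expect any genuine obstacle here; the entire mathematical content is the observation that extracting a fixed component is a $\mathbf{Z}$-linear operation, and the proof is a pure substitution exactly parallel to the standard-cylinder case.
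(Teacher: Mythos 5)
Your proposal is correct and matches the paper's approach: the paper dismisses this as ``a routine check,'' which is precisely the direct substitution of the defining formula $\breve{\text{cil}}_0{\gamma_+}[j]_m=(j+2)\delta_{j,m}$ into the sums of Definition \ref{cilinderscero}, exactly as carried out for the standard cylinder in the preceding proposition. Nothing further is needed.
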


\begin{proof} It is a routine check.

\end{proof}

\begin{prop}\label{cilCilcero}
For the augmented simplicial set  $\text{Cil}_0 {\Gamma_+}[n]$  we have
\begin{equation}
\label{Cilcilcero}
 \text{cil}_0\gamma_+[n]=|\text{Cil}_0 {\Gamma_+}[n]|, \quad   \text{cil}_0=|\text{Cil}_0|
\end{equation}
 where $\text{Cil}_0$ is the co-semi-simplicial object given in subsection \ref{geometriccylinders}
\end{prop}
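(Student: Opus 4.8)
The plan is to mirror the short argument used for the standard cylinder in Proposition~\ref{cilCil}, reducing the statement to a term-by-term comparison between the geometric cardinal formulas of Proposition~\ref{Cilcerocal} and the algebraic definitions collected in Definition~\ref{cilinderscero}. The key observation is that the two families of formulas were set up with identical shape, so that the proof amounts to checking that they agree index by index, first for the ``punctured'' cylinders, then for the boundaries, and finally for the full cylinders.

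First I would establish the base identity at the level of the punctured cylinders. Part~(i) of Proposition~\ref{Cilcerocal} gives $|\breve{\text{Cil}}_0{\Gamma_+}[n]|_k=(n+2)\,\delta_{n,k}$, which is exactly the defining expression $\breve{\text{cil}}_0{\gamma_+}[n]_m=(n+2)\,\delta_{n,m}$ of Definition~\ref{cilinderscero}. Hence $|\breve{\text{Cil}}_0{\Gamma_+}[n]|=\breve{\text{cil}}_0{\gamma_+}[n]$ for every $n\in\mathbb{N}_+$.

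Next I would propagate this equality through the binomial-weighted sums. Both $|\text{Cil}_0\partial{\Gamma_+}[n]|$ (Proposition~\ref{Cilcerocal}(ii)) and $\text{cil}_0\partial{\gamma_+}[n]$ (Definition~\ref{cilinderscero}) are defined by applying the operator $\sum_{j=-1}^{n-1}\binom{n+1}{j+1}(-)$ to the corresponding punctured sequences; since those agree by the previous step, we obtain $|\text{Cil}_0\partial{\Gamma_+}[n]|=\text{cil}_0\partial{\gamma_+}[n]$. Adding the top term (equivalently, extending the summation index to $j=n$), Proposition~\ref{Cilcerocal}(iii) and the defining formula for $\text{cil}_0{\gamma_+}[n]$ in Definition~\ref{cilinderscero} yield $|\text{Cil}_0{\Gamma_+}[n]|=|\text{Cil}_0\partial{\Gamma_+}[n]|+|\breve{\text{Cil}}_0{\Gamma_+}[n]|=\text{cil}_0\partial{\gamma_+}[n]+\breve{\text{cil}}_0{\gamma_+}[n]=\text{cil}_0{\gamma_+}[n]$, which is the first asserted equality.

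Finally I would assemble the matrix statement. By Definition~\ref{cilinderscero} the $n$-th row of $\text{cil}_0$ is $\text{cil}_0{\gamma_+}[n]$, while the $n$-th row of $|\text{Cil}_0|$ is $|\text{Cil}_0([n])|=|\text{Cil}_0{\Gamma_+}[n]|$ by the construction of the sequential cardinal functor on co-semi-simplicial objects; these rows coincide by the first equality, so $\text{cil}_0=|\text{Cil}_0|$. I do not expect any genuine obstacle here, since the whole content is bookkeeping. The only point requiring care is matching the index conventions (the dummy variables $k$ versus $m$ and $i$ versus $j$, and the $\binom{n+1}{i+1}$ versus $\binom{1+n}{1+j}$ notation) and verifying that the Kronecker-delta expressions in Definition~\ref{cilinderscero} reproduce exactly the piecewise formulas of Proposition~\ref{Cilcerocal}.
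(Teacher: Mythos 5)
Your proposal is correct and follows exactly the route the paper takes: the paper's own proof is a one-line citation of Proposition~\ref{Cilcerocal} and Definition~\ref{cilinderscero}, and your write-up simply spells out the term-by-term comparison that this citation implies. No discrepancies.
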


\begin{proof} It is a consequence of Proposition \ref{Cilcerocal} and Definition \ref{cilinderscero}.

\end{proof}

We define the notion of a 0-cylinder of an augmented sequence analogously to that of its standard cylinder.

\begin{defi}  The $0$-cylinder of an augmented integer sequence $a \in {\bf Z}^{\mathbb{N}_+^{op}} $ is defined as the tilde-triangle product
$\text{cil}_0 (a):=a \tilde{\triangleright} \text{cil}_0.$ This construction gives rise to a functor
$$\text{cil}_0 \colon {\bf Z}^{\mathbb{N}_+^{op}} \to {\bf Z}^{\mathbb{N}_+^{op}}$$
\end{defi}

In this case, as $\text{bin}^{-1} \cdot  \text{cil}_0=\breve{\text{cil}}_0$, this definition is well-given.

\begin{cor}  Given an augmented integer sequence $a \in {\bf Z}^{\mathbb{N}_+^{op}} $ its $0$-cylinder can be described as
$$\text{cil}_0 (a)=a \cdot \breve{\text{cil}}_0 .$$
\end{cor}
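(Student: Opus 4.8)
The plan is to unwind the definitions and reduce the claim to the identity $\text{bin}^{-1}\cdot\text{cil}_0=\breve{\text{cil}}_0$ that was recorded immediately before the statement. By the definition of the algebraic $0$-cylinder we have $\text{cil}_0(a)=a\,\widetilde{\triangleright}\,\text{cil}_0$, and Definition \ref{algebraicaction} expands the tilde-triangle product as $a\,\widetilde{\triangleright}\,\text{cil}_0=(a\cdot\text{bin}^{-1})\cdot\text{cil}_0$. Hence the whole argument amounts to rebracketing a triple matrix product and invoking the cited identity.

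First I would check that the expression is meaningful for an arbitrary, possibly infinitely supported, sequence $a\in\mathbf{Z}^{\mathbb{N}_+^{op}}$, since the corollary is stated at that level of generality. The preceding proposition gives $(\breve{\text{cil}}_0)_{nm}=(n+2)\delta_{n,m}$, so $\breve{\text{cil}}_0$ is diagonal and in particular column-finite, i.e. $\breve{\text{cil}}_0\in((\mathbf{Z}^{\mathbb{N}_+})_{\text{fin}})^{\mathbb{N}_+^{op}}$. By the remark on the extended dot product following Definition \ref{algebraicaction}, the condition $\text{bin}^{-1}\cdot\text{cil}_0=\breve{\text{cil}}_0\in((\mathbf{Z}^{\mathbb{N}_+})_{\text{fin}})^{\mathbb{N}_+^{op}}$ is exactly what licenses the extended tilde-triangle product $a\,\widetilde{\triangleright}\,\text{cil}_0=a\cdot(\text{bin}^{-1}\cdot\text{cil}_0)$ and guarantees its agreement with $(a\cdot\text{bin}^{-1})\cdot\text{cil}_0$ through associativity of the matricial product wherever that product is defined.

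Then I would assemble the chain
$$\text{cil}_0(a)=(a\cdot\text{bin}^{-1})\cdot\text{cil}_0=a\cdot(\text{bin}^{-1}\cdot\text{cil}_0)=a\cdot\breve{\text{cil}}_0,$$
where the middle equality is associativity and the last uses $\text{bin}^{-1}\cdot\text{cil}_0=\breve{\text{cil}}_0$; this is precisely the asserted formula, and the reasoning is formally identical to the corollary for the standard cylinder. The only mild obstacle is the bookkeeping of well-definedness together with the legitimacy of the reassociation for non-finite $a$, both of which are dispatched by the observation that $\breve{\text{cil}}_0$ is diagonal, so each coordinate $(a\cdot\breve{\text{cil}}_0)_m=(m+2)\,a_m$ is a finite sum; everything else is direct substitution.
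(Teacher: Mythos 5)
Your proposal is correct and follows exactly the route the paper intends (the paper leaves the proof implicit, merely noting that $\text{bin}^{-1}\cdot\text{cil}_0=\breve{\text{cil}}_0$ makes the definition well-given): unwind $\text{cil}_0(a)=a\,\widetilde{\triangleright}\,\text{cil}_0$ via the extended tilde-triangle product and reassociate. Your extra care in verifying column-finiteness of the diagonal matrix $\breve{\text{cil}}_0$, which legitimizes the construction for arbitrary $a\in\mathbf{Z}^{\mathbb{N}_+^{op}}$, is exactly the bookkeeping the paper's preceding remark relies on.
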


\begin{obs} Note that the functor $\text{cil}_0 \colon {\bf Z}^{\mathbb{N}_+^{op}} \to {\bf Z}^{\mathbb{N}_+^{op}}$ also has an inverse functor $\text{cil}_0^{-1} \colon {\bf Z}^{\mathbb{N}_+^{op}} \to {\bf Z}^{\mathbb{N}_+^{op}}$,
$ \text{cil}_0^{-1} (a)= a \cdot (\breve{\text{cil}}_0)^{-1} $,  $ a \in {\bf Z}^{\mathbb{N}_+^{op}} $. For $k \in \mathbb{Z}$,  one can also consider the iteration functor $\text{cil}_0^{k}$ given by $ \text{cil}_0^{k} (a)= a \cdot (\breve{\text{cil}}_0)^{k} $,  $ a \in {\bf Z}^{\mathbb{N}_+^{op}} $. In this case,
$$ (\text{cil}_0^{k} (a))_m= a_m (2+m)^k. $$
\end{obs}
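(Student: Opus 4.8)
The plan is to reduce everything to the fact, recorded in the corollary immediately above, that the $0$-cylinder operator acts by right multiplication with the matrix $\breve{\text{cil}}_0$, which is \emph{diagonal}. First I would make the shape of $\breve{\text{cil}}_0$ explicit: by Definition \ref{cilinderscero} its entries are $(\breve{\text{cil}}_0)_{nm}=(n+2)\,\delta_{n,m}$, so it is the diagonal augmented matrix whose $(n,n)$-entry equals $n+2$. Since each diagonal entry $n+2$ is a nonzero integer for every $n\in\mathbb{N}_+$, this matrix is invertible, with $(\breve{\text{cil}}_0)^{-1}$ again diagonal and $((\breve{\text{cil}}_0)^{-1})_{nm}=\frac{1}{n+2}\,\delta_{n,m}$. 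More generally, multiplicativity of diagonal matrices gives, for every $k\in\mathbb{Z}$, that $(\breve{\text{cil}}_0)^k$ is diagonal with $((\breve{\text{cil}}_0)^k)_{nm}=(n+2)^k\,\delta_{n,m}$.

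Next I would verify that iteration of the functor matches right multiplication by the corresponding power. Because $\breve{\text{cil}}_0$ is diagonal, each of its columns is eventually zero, so the dot product $a\cdot\breve{\text{cil}}_0$ is defined for an arbitrary sequence $a\in\mathbf{Z}^{\mathbb{N}_+^{op}}$, and the matrix products involved are associative wherever they are defined. Hence $\text{cil}_0^{\,2}(a)=\text{cil}_0(a\cdot\breve{\text{cil}}_0)=(a\cdot\breve{\text{cil}}_0)\cdot\breve{\text{cil}}_0=a\cdot(\breve{\text{cil}}_0)^2$, and an easy induction yields $\text{cil}_0^{\,k}(a)=a\cdot(\breve{\text{cil}}_0)^k$ for all $k\geq 0$. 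Using the diagonal inverse the same identity extends to negative $k$, and in particular $\text{cil}_0\circ\text{cil}_0^{-1}=\text{cil}_0^{-1}\circ\text{cil}_0=\mathrm{id}$, since $\breve{\text{cil}}_0\cdot(\breve{\text{cil}}_0)^{-1}$ is the identity augmented matrix and the identity matrix acts trivially under the dot product.

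The closed formula is then a one-line computation using the diagonal power: for every $m\in\mathbb{N}_+$,
$$(\text{cil}_0^{\,k}(a))_m=\bigl(a\cdot(\breve{\text{cil}}_0)^k\bigr)_m=\sum_{n\in\mathbb{N}_+}a_n\,(n+2)^k\,\delta_{n,m}=a_m\,(m+2)^k .$$

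The one point I would flag, rather than the computation itself, is integrality for negative $k$: as $(\breve{\text{cil}}_0)^{-1}$ has rational entries $\frac{1}{n+2}$, the operator $\text{cil}_0^{k}$ with $k<0$ is a priori defined on rational sequences and restricts to an endofunctor of $\mathbf{Z}^{\mathbb{N}_+^{op}}$ only on those $a$ for which $(m+2)^{-k}$ divides $a_m$ for all $m$; this is exactly why the inverse is best read inside the groupoid of sequences where these morphisms become invertible. The diagonal form of $\breve{\text{cil}}_0$ is what keeps all of this transparent and the whole argument elementary.
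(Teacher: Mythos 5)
Your proof is correct and is essentially the argument the paper leaves implicit: the remark is stated without proof, and the intended justification is exactly your observation that $\breve{\text{cil}}_0$ is diagonal with entries $(\breve{\text{cil}}_0)_{nm}=(n+2)\delta_{n,m}$, so $(\breve{\text{cil}}_0)^k$ is diagonal with entries $(n+2)^k\delta_{n,m}$ and the closed formula $(\text{cil}_0^{k}(a))_m=a_m(2+m)^k$ drops out of the dot product. Your integrality caveat is a fair criticism of the remark as stated rather than a gap in your argument: since $(\breve{\text{cil}}_0)^{-1}$ has entries $\tfrac{1}{n+2}$, the functor $\text{cil}_0^{-1}$ does not in general take values in ${\bf Z}^{\mathbb{N}_+^{op}}$ but only in rational sequences, restricting to integer sequences exactly when $(m+2)^{-k}$ divides $a_m$ for all $m$; the paper glosses over this.
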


\subsubsection{The 2-cylinder for integer sequences}

Again, following the same structure of previous subsection, and associated with the augmented semi-simplicial sets $\text{Cil} \Gamma_+[n]$, $\text{Cil} \partial \Gamma_+[n]$, we can consider the following integer sequences:

\begin{defi}\label{cilindersdos} The augmented sequence $ \breve {\text{cil}}_2 \gamma_+[n]\in {\mathbf{Z}}^{{\mathbb{N}_+^{op}}}$ is given by
\begin{equation}
\label{brevecil}
\breve {\text{cil}}_2 \gamma_+[n]_m:=  \sum_{i=-1}^{n} \binom{n+1}{i+1} \binom{i+1}{m-n}
\end{equation}
and the augmented sequences  $\text{cil}_2\partial{\gamma_+}[n],  \text{cil}_2{\gamma_+}[n] \in {\mathbf{Z}}^{{\mathbb{N}_+^{op}}}$ by
\begin{equation}
\label{partialcil}
 \text{cil}_2\partial{\gamma_+}[n]:= \sum_{j=-1}^{n-1} \binom{1+n}{1+j}  \breve{\text{cil}}{\gamma_+}[j]
\end{equation}
\begin{equation}
\label{cil}
 \text{cil}_2{\gamma_+}[n]:= \sum_{j=-1}^{n} \binom{1+n}{1+j}  \breve{\text{cil}}{\gamma_+}[j]
\end{equation}
\end{defi}

\begin{prop} The augmented matrices $\breve{\text{cil}}_2$,   $\text{cil}_2\partial$, $\text{cil}_2$ can be described as follows:
\begin{itemize}
  \item[(i)] $(\breve{\text{cil}}_2)_{nm}= \sum_{i=-1}^{n} \binom{n+1}{i+1} \binom{i+1}{m-n},$
  \item[(ii)]  $(\text{cil}_2\partial)_{nm}= \sum_{j=-1}^{n-1}  \sum_{i=-1}^{j}\binom{1+n}{1+j}  \binom{j+1}{i+1} \binom{i+1}{m-j},$
  \item[(iii)]  $(\text{cil}_2)_{nm}= \sum_{j=-1}^{n}  \sum_{i=-1}^{j}\binom{1+n}{1+j}  \binom{j+1}{i+1} \binom{i+1}{m-j}.$
\end{itemize}
\end{prop}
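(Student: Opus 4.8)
The plan is to prove all three items by directly unwinding the definitions, exactly in the spirit of the analogous propositions for $\breve{\text{cil}}$ and $\breve{\text{cil}}_0$. Part (i) requires essentially no work: by the matrix-entry convention adopted throughout this subsection, $(\breve{\text{cil}}_2)_{nm}=\breve{\text{cil}}_2{\gamma_+}[n]_m$, and Definition \ref{cilindersdos} defines this coordinate to be precisely $\sum_{i=-1}^{n}\binom{n+1}{i+1}\binom{i+1}{m-n}$. Hence (i) is just a restatement of the defining formula at the $(n,m)$ entry.

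For parts (ii) and (iii) I would start from the defining expressions for $\text{cil}_2\partial{\gamma_+}[n]$ and $\text{cil}_2{\gamma_+}[n]$ as binomially weighted sums of the sequences $\breve{\text{cil}}_2{\gamma_+}[j]$, namely $\text{cil}_2\partial{\gamma_+}[n]=\sum_{j=-1}^{n-1}\binom{1+n}{1+j}\,\breve{\text{cil}}_2{\gamma_+}[j]$ and $\text{cil}_2{\gamma_+}[n]=\sum_{j=-1}^{n}\binom{1+n}{1+j}\,\breve{\text{cil}}_2{\gamma_+}[j]$. Evaluating each of these at the $m$-th coordinate and inserting the formula from part (i) with $j$ in place of $n$, each inner term becomes $\breve{\text{cil}}_2{\gamma_+}[j]_m=\sum_{i=-1}^{j}\binom{j+1}{i+1}\binom{i+1}{m-j}$. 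Absorbing the weight $\binom{1+n}{1+j}$, which is constant with respect to $i$, into the inner sum and merging the two summation signs yields
$$(\text{cil}_2)_{nm}=\sum_{j=-1}^{n}\sum_{i=-1}^{j}\binom{1+n}{1+j}\binom{j+1}{i+1}\binom{i+1}{m-j},$$
and the boundary version (ii) is identical except that the outer index $j$ runs only up to $n-1$. This gives the stated double sums.

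The only point demanding a little care is bookkeeping of the index conventions: the upward shift built into the binomial entries $\binom{n+1}{i+1}$ means that when one specializes the part-(i) formula to argument $j$, the first two binomials become $\binom{j+1}{i+1}$ and $\binom{i+1}{m-j}$, and the summation range for $i$ shrinks to $-1\le i\le j$. I would also note the consistency check that for $m\notin\{-1,\dots\}$ outside the relevant range the $\binom{i+1}{m-j}$ factors vanish (recall the convention $\binom{p}{q}=0$ for $q<0$ or $q>p$), so the finite sums are well defined and agree with the tables computed above. There is no genuine mathematical obstacle here; the content is entirely the correct propagation of Definition \ref{cilindersdos} through the weighted sums, so the argument reduces to a routine substitution, which can be stated in a few lines.
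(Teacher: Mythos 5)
Your proposal is correct and follows essentially the same route as the paper: part (i) is just the defining formula read off at the $(n,m)$ entry, and the paper's own proof of (iii) is exactly your substitution of $\breve{\text{cil}}_2{\gamma_+}[j]_m=\sum_{i=-1}^{j}\binom{j+1}{i+1}\binom{i+1}{m-j}$ into the weighted sum $\sum_{j=-1}^{n}\binom{1+n}{1+j}\breve{\text{cil}}_2{\gamma_+}[j]_m$, with (ii) obtained by truncating the outer index at $n-1$. (You also implicitly, and correctly, read the $\breve{\text{cil}}{\gamma_+}[j]$ appearing in Definition \ref{cilindersdos} as the intended $\breve{\text{cil}}_2{\gamma_+}[j]$, which is what the paper's proof uses as well.)
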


\begin{proof}  (iii)  Since
 $\breve{\text{cil}}_2{\gamma_+}[j]_m=\sum_{i=-1}^{j} \binom{j+1}{i+1} \binom{i+1}{m-j}$,
 we have

 $\text{cil}_2{\gamma_+}[n]_m= \sum_{j=-1}^{n} \binom{1+n}{1+j}  \breve{\text{cil}}_2{\gamma_+}[j]_m= \sum_{j=-1}^{n} \binom{1+n}{1+j}  \sum_{i=-1}^{j} \binom{j+1}{i+1} \binom{i+1}{m-j}$

\end{proof}

\begin{prop}\label{cilCildos}
For the augmented simplicial set  $\text{Cil}_2 {\Gamma_+}[n]$  we have
\begin{equation}
\label{Cilcildos}
 \text{cil}_2\gamma_+[n]=|\text{Cil}_2 {\Gamma_+}[n]|, \quad   \text{cil}_2=|\text{Cil}_2|
\end{equation}
 where $\text{Cil}_2$ is the co-semi-simplicial object given in subsection \ref{geometriccylinders}
\end{prop}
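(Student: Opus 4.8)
The plan is to follow the same route already used for Propositions~\ref{cilCil} and~\ref{cilCilcero}, deducing the statement directly from the geometric counting results of Subsection~\ref{geometriccylinders} together with Definition~\ref{cilindersdos}. The entire argument is a bookkeeping comparison between two families of binomial-weighted sums, so I do not expect any genuine difficulty; the only care needed is to keep the triangular index ranges aligned and to read the symbol $\breve{\text{cil}}$ appearing in Definition~\ref{cilindersdos} as $\breve{\text{cil}}_2$.

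First I would establish the base identity at the level of the ``interior'' sequences. The proposition immediately preceding Proposition~\ref{2cilindroprop} computes $|\breve{\text{Cil}}_2{\Gamma_+}[n]|_m=\sum_{i=-1}^{n}\binom{n+1}{i+1}\binom{i+1}{m-n}$, which is \emph{verbatim} the defining formula for $\breve{\text{cil}}_2{\gamma_+}[n]_m$ in Definition~\ref{cilindersdos}. Hence $|\breve{\text{Cil}}_2{\Gamma_+}[n]|=\breve{\text{cil}}_2{\gamma_+}[n]$ for every $n\in\mathbb{N}_+$, and, reading these sequences as the rows of a matrix, $|\breve{\text{Cil}}_2|=\breve{\text{cil}}_2$.

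Next I would propagate this equality through the two binomial recursions. Substituting the identity above into Proposition~\ref{2cilindroprop}(i) gives
\[
|\text{Cil}_2\partial{\Gamma_+}[n]|
=\sum_{j=-1}^{n-1}\binom{1+n}{1+j}\,|\breve{\text{Cil}}_2{\Gamma_+}[j]|
=\sum_{j=-1}^{n-1}\binom{1+n}{1+j}\,\breve{\text{cil}}_2{\gamma_+}[j]
=\text{cil}_2\partial{\gamma_+}[n],
\]
the last equality being precisely the definition of $\text{cil}_2\partial{\gamma_+}[n]$. Adding the interior term and using $\binom{1+n}{1+n}=1$, Proposition~\ref{2cilindroprop}(ii) then yields
\[
|\text{Cil}_2{\Gamma_+}[n]|
=|\text{Cil}_2\partial{\Gamma_+}[n]|+|\breve{\text{Cil}}_2{\Gamma_+}[n]|
=\text{cil}_2\partial{\gamma_+}[n]+\breve{\text{cil}}_2{\gamma_+}[n]
=\sum_{j=-1}^{n}\binom{1+n}{1+j}\,\breve{\text{cil}}_2{\gamma_+}[j]
=\text{cil}_2{\gamma_+}[n],
\]
which is the first asserted equality. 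The matrix identity $\text{cil}_2=|\text{Cil}_2|$ is then immediate, since the $n$-th rows of both sides are $\text{cil}_2{\gamma_+}[n]$ and $|\text{Cil}_2{\Gamma_+}[n]|$, respectively.

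As a consistency check (and an alternative derivation) I would note that, because $\text{Cil}_2{\Gamma_+}[n]={\Gamma_+}[n]\boxplus{\Gamma_+}[n]$ and the sequential cardinal functor is monoidal for $\boxplus$, one has $|\text{Cil}_2{\Gamma_+}[n]|=\gamma_+[n]\boxplus\gamma_+[n]$; since this join-product also equals $|\text{Cil}_2{\Gamma_+}[n]|$, it agrees with the double-sum expression for $\text{cil}_2{\gamma_+}[n]$ obtained above, and no separate combinatorial identity must be verified. The main (and very mild) obstacle here is purely notational: confirming that the two triangular index ranges and the typographical $\breve{\text{cil}}$ in Definition~\ref{cilindersdos} genuinely refer to $\breve{\text{cil}}_2$, after which the statement collapses entirely to the two counting propositions of Subsection~\ref{geometriccylinders}.
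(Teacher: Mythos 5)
Your proposal is correct and follows exactly the route the paper takes: its proof of Proposition~\ref{cilCildos} is the one-line observation that the claim follows from Proposition~\ref{2cilindroprop} and Definition~\ref{cilindersdos}, and your write-up simply fills in that comparison (including the correct reading of the typographical $\breve{\text{cil}}$ as $\breve{\text{cil}}_2$ in Definition~\ref{cilindersdos}). The extra consistency check via $\text{Cil}_2{\Gamma_+}[n]={\Gamma_+}[n]\boxplus{\Gamma_+}[n]$ and monoidality of $|\cdot|$ is not in the paper but is a harmless bonus.
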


\begin{proof} It is a consequence of Proposition \ref{2cilindroprop} and Definition \ref{cilindersdos}.

\end{proof}

\begin{defi} The \emph{2-cylinder} of an augmented integer sequence $ a \in {\bf Z}^{\mathbb{N}_+^{op}} $,
is defined as the sequence $\text{cil}_2 (a)=a \tilde{\triangleright} \text{cil}_2$. This construction gives us a functor
$$\text{cil}_2 \colon {\bf Z}^{\mathbb{N}_+^{op}} \to {\bf Z}^{\mathbb{N}_+^{op}}$$
\end{defi}

As $\text{bin}^{-1} \cdot  \text{cil}_2=\breve{\text{cil}}_2$ this is well defined. Moreover

\begin{cor}  The 2-cylinder of $a \in {\bf Z}^{\mathbb{N}_+^{op}}$ can be described as
$$\text{cil}_2 (a)= a \cdot \breve{\text{cil}}_2 .$$
\end{cor}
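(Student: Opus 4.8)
The plan is to peel off the definitions and reduce the statement to a single matrix factorization. By the definition of the $2$-cylinder operator for sequences, $\text{cil}_2(a)=a\,\widetilde{\triangleright}\,\text{cil}_2$, and by Definition \ref{algebraicaction} the tilde-triangle product unfolds as $a\,\widetilde{\triangleright}\,\text{cil}_2=(a\cdot\text{bin}^{-1})\cdot\text{cil}_2$. Thus the whole claim rests on being able to reassociate this product as $a\cdot(\text{bin}^{-1}\cdot\text{cil}_2)$ together with the identity $\text{bin}^{-1}\cdot\text{cil}_2=\breve{\text{cil}}_2$.

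First I would verify the matrix factorization $\text{cil}_2=\text{bin}\cdot\breve{\text{cil}}_2$. Comparing entries, the defining formula for $\text{cil}_2\gamma_+[n]$ in Definition \ref{cilindersdos} expresses $\text{cil}_2$ in terms of $\breve{\text{cil}}_2$ through binomial weights, namely $(\text{cil}_2)_{n,m}=\sum_{j=-1}^{n}\binom{n+1}{j+1}\,(\breve{\text{cil}}_2)_{j,m}$; since $\text{bin}_{n,j}=\binom{n+1}{j+1}$ this is exactly $(\text{bin}\cdot\breve{\text{cil}}_2)_{n,m}$. Multiplying on the left by $\text{bin}^{-1}$ and using that $\text{bin}^{-1}$ is a two-sided inverse of $\text{bin}$ (as recorded by the identity $a\,\widetilde{\triangleright}\,\text{bin}=a$) yields $\text{bin}^{-1}\cdot\text{cil}_2=\breve{\text{cil}}_2$, which is precisely the identity announced in the text just before the statement.

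With the factorization established, the computation is
\[
\text{cil}_2(a)=a\,\widetilde{\triangleright}\,\text{cil}_2=(a\cdot\text{bin}^{-1})\cdot\text{cil}_2=a\cdot(\text{bin}^{-1}\cdot\text{cil}_2)=a\cdot\breve{\text{cil}}_2.
\]
The only delicate point — the part I expect to be the real obstacle — is justifying the associativity step for an \emph{arbitrary} sequence $a\in\mathbf{Z}^{\mathbb{N}_+^{op}}$ rather than only for a finitely supported one. This is settled by the Remark following Definition \ref{algebraicaction}: the matrix $\breve{\text{cil}}_2=\text{bin}^{-1}\cdot\text{cil}_2$ has each column eventually zero, since $(\breve{\text{cil}}_2)_{n,m}=0$ whenever $n>m$ (the factor $\binom{i+1}{m-n}$ vanishes for $m-n<0$), so it belongs to $((\mathbf{Z}^{\mathbb{N}_+})_{\text{fin}})^{\mathbb{N}_+^{op}}$. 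For such matrices the extended dot product $a\cdot\breve{\text{cil}}_2$ is well defined and the matrix products reassociate freely, exactly as in the analogous corollaries for $\text{cil}$ and $\text{cil}_0$. This completes the argument, and as a byproduct it re-derives the well-definedness remark $\text{bin}^{-1}\cdot\text{cil}_2=\breve{\text{cil}}_2$ used to make sense of $\text{cil}_2(a)$.
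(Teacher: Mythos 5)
Your argument is correct and follows the same route the paper takes (the paper leaves this corollary essentially unproved, relying on the identity $\text{bin}^{-1}\cdot\text{cil}_2=\breve{\text{cil}}_2$ stated just before it): unfold $a\,\widetilde{\triangleright}\,\text{cil}_2=(a\cdot\text{bin}^{-1})\cdot\text{cil}_2$, use the factorization $\text{cil}_2=\text{bin}\cdot\breve{\text{cil}}_2$ coming from Definition \ref{cilindersdos}, and reassociate. Your extra verification that the columns of $\breve{\text{cil}}_2$ are eventually zero, which licenses the reassociation for arbitrary $a\in{\bf Z}^{\mathbb{N}_+^{op}}$, supplies a detail the paper only gestures at.
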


\begin{obs} The functor $\text{cil}_2 \colon {\bf Z}^{\mathbb{N}_+^{op}} \to {\bf Z}^{\mathbb{N}_+^{op}}$ has an inverse functor $\text{cil}_2^{-1} \colon {\bf Z}^{\mathbb{N}_+^{op}} \to {\bf Z}^{\mathbb{N}_+^{op}}$ given as
$\text{cil}_2^{-1} (a)=a \cdot (\breve{\text{cil}}_2)^{-1} $, for all $ a \in {\bf Z}^{\mathbb{N}_+^{op}} $. For $k \in \mathbb{Z}$,  one can also consider the iteration functor  $\text{cil}_2^{k}$ given by $ \text{cil}_2^{k} (a)= a \cdot (\breve{\text{cil}}_2)^{k} $, for all  $a\in {\bf Z}^{\mathbb{N}_+^{op}} $.
\end{obs}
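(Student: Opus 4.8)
The plan is to reduce the whole statement to a single fact: the augmented matrix $\breve{\text{cil}}_2$ is invertible. Granting this, the preceding Corollary gives $\text{cil}_2(a)=a\cdot\breve{\text{cil}}_2$, so the assignment $a\mapsto a\cdot(\breve{\text{cil}}_2)^{-1}$ inverts $\text{cil}_2$, and the integer powers $(\breve{\text{cil}}_2)^{k}$, $k\in\mathbb{Z}$, furnish the iteration operators $\text{cil}_2^{k}(a)=a\cdot(\breve{\text{cil}}_2)^{k}$, with the expected composition rule $\text{cil}_2^{k}\circ\text{cil}_2^{l}=\text{cil}_2^{k+l}$. Thus everything hinges on producing a two-sided inverse of $\breve{\text{cil}}_2$ for which the relevant products are well defined.

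First I would show that $\breve{\text{cil}}_2$ is upper triangular with nonzero diagonal. Using the formula $(\breve{\text{cil}}_2)_{nm}=\sum_{i=-1}^{n}\binom{n+1}{i+1}\binom{i+1}{m-n}$ established above, the factor $\binom{i+1}{m-n}$ vanishes unless $0\leq m-n\leq i+1$; hence $(\breve{\text{cil}}_2)_{nm}=0$ for $m<n$, and the support of the $n$-th row lies in $\{n,n+1,\dots,2n+1\}$. On the diagonal $m=n$ one has $\binom{i+1}{0}=1$, so
$$(\breve{\text{cil}}_2)_{nn}=\sum_{i=-1}^{n}\binom{n+1}{i+1}=\sum_{j=0}^{n+1}\binom{n+1}{j}=2^{\,n+1}\neq 0$$
for every $n\in\mathbb{N}_+$.

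Next I would invert it by back-substitution. An upper triangular augmented matrix with all diagonal entries nonzero has a unique two-sided inverse: solving $\breve{\text{cil}}_2\cdot(\breve{\text{cil}}_2)^{-1}=I$, where $I=(\delta_{n,m})$ is the identity augmented matrix, column by column determines $(\breve{\text{cil}}_2)^{-1}$ uniquely, and the result is again upper triangular, with diagonal entries $2^{-(n+1)}$. Because the $n$-th row of $\breve{\text{cil}}_2$ is supported in $\{n,\dots,2n+1\}$ and each column of $(\breve{\text{cil}}_2)^{-1}$ is finitely supported, every coefficient of any product $a\cdot(\breve{\text{cil}}_2)^{k}$ is a finite sum; this simultaneously makes the iteration operators well defined and validates the associativity used for $\text{cil}_2^{k}\circ\text{cil}_2^{l}=\text{cil}_2^{k+l}$.

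The delicate point, and the main obstacle, is precisely this well-definedness of infinite matrix arithmetic, since neither associativity of products of arbitrary infinite matrices nor the mere existence of a formal inverse is automatic. The banded upper triangular shape proved in the second step is exactly what rescues the argument: for each fixed pair of indices only finitely many terms contribute, so the inverse, the powers, and all compositions are governed by finite sums. I would also remark that the diagonal entries $2^{\,n+1}$ are not units in $\mathbb{Z}$, so $(\breve{\text{cil}}_2)^{-1}$ carries denominators $2^{-(n+1)}$; reading the operators over $\mathbb{Q}$-valued augmented sequences absorbs these without altering any of the finite computations above.
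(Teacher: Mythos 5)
The paper states this as a bare remark and offers no proof, so there is nothing to compare against line by line; your verification is the natural one and its computations are correct. From $(\breve{\text{cil}}_2)_{nm}=\sum_{i=-1}^{n}\binom{n+1}{i+1}\binom{i+1}{m-n}$ one indeed gets that row $n$ is supported on $\{n,\dots,2n+1\}$ and that the diagonal entry is $\sum_{j=0}^{n+1}\binom{n+1}{j}=2^{n+1}$ (matching the displayed table $1,2,4,8,16,\dots$), and the banded upper-triangular shape is exactly what makes the formal inverse, the integer powers, and the associativity behind $\text{cil}_2^{k}\circ\text{cil}_2^{l}=\text{cil}_2^{k+l}$ well defined by finite sums. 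You are right to single out well-definedness of the infinite matrix arithmetic as the only point needing care, and your resolution of it is sound.

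Your closing observation, however, is more than a side remark: it shows the statement as printed is too strong. Since the diagonal entries $2^{n+1}$ are not units in $\mathbb{Z}$ for $n\geq 0$, the matrix $(\breve{\text{cil}}_2)^{-1}$ has genuinely non-integral entries, and $a\mapsto a\cdot(\breve{\text{cil}}_2)^{-1}$ does not land in ${\bf Z}^{\mathbb{N}_+^{op}}$. Concretely, $(a\cdot\breve{\text{cil}}_2)_0=2a_0$ is always even, so ${\bf 1}_0$ is not in the image of $\text{cil}_2$ on integer sequences: the functor is injective but not surjective on ${\bf Z}^{\mathbb{N}_+^{op}}$ and admits no inverse with the stated codomain. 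The claim becomes correct only after extending scalars to $\mathbb{Q}$ (or restricting the codomain to the image), exactly as you propose; the same caveat applies to the parallel remarks for $\text{cil}$ and $\text{cil}_0$, whose diagonal entries $n+2$ are likewise non-units, and to the introduction's assertion that these matrices induce automorphisms of $\Pi_{i=-1}^{\infty}\mathbb{Z}$. Your proof is therefore correct, but it establishes a suitably amended statement rather than the one in the remark.
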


\subsection{Subdivision for integer sequences}\label{subdivisionofintegers}\label{subdicsequences}

Now, we analyse (barycentric) subdivisions for sequences. Recall that, for $n\geq 0$ we have $[n]=\{0,\cdots, n\}$ whereas $[-1]=\emptyset$.
As the empty set is always included, the cardinal of the power set $\mathcal{P}([n])$ is $2^{n+1}$.


For all $n, p \in {\mathbb{N}_+}$, we consider:
$$\text{Cad}^+[n]_p:=\{\emptyset=N_{-1}\subset N_0\subset N_1 \subset \cdots \subset N_p \subseteq [n]\hspace{2pt}:\hspace{2pt}  N_{i}\not = N_{i+1}, -1\leq i\leq p-1\}  \}$$
$$\breve{\text{Cad}}^+[n]_p:=\{\emptyset=N_{-1}\subset N_0\subset N_1 \subset \cdots \subset N_p = [n]\hspace{2pt}:\hspace{2pt}  N_{i}\not = N_{i+1}, -1\leq i\leq p-1\}  \}$$
$$\text{Cad}[n]_p:=\{\emptyset=N_{-1}\subseteq N_0\subset N_1 \subset \cdots \subset N_p \subseteq [n]\hspace{2pt}:\hspace{2pt}  N_{i}\not = N_{i+1}, 0\leq i\leq  p-1\}  \}$$
where the notation Cad comes from the Spanish word ``Cadena'', which means chain.
Taking cardinals, we obtain the augmented sequences
$$\text{cad}^+[n]_p:=|\text{Cad}^+[n]_p|, \quad \breve{\text{cad}}^+[n]_p:=|\breve{\text{Cad}}^+[n]_p|,\quad
\text{cad}[n]_p:=|\text{Cad}[n]_p|$$
\noindent and the corresponding matrices in $({{\bf Z}}^{{\mathbb{N}}_+^{op}})^{\mathbb{N}_+}$:
$$\text{cad}^+:=\begin{pmatrix} \text{cad}^+[-1] \\ \vdots \\ \text{cad}^+[n] \\ \vdots\end{pmatrix},\quad \breve{\text{cad}}^+:=\begin{pmatrix}\breve{\text{cad}}^+[-1] \\ \vdots\\ \breve{\text{cad}}^+[n] \\ \vdots\end{pmatrix}, \quad \text{cad}:=\begin{pmatrix} \text{cad}[-1) \\ \vdots\\ \text{cad}[n] \\ \vdots\end{pmatrix}$$

\begin{lem} \label{relacad}  The cone functor $\mbox{con}:{\mathbf{Z}}^{{\mathbb{N}_+^{op}}}\to {\mathbf{Z}}^{{\mathbb{N}_+^{op}}}$ satisfies the following relation:
$$\text{con} (\text{cad}^+[n])=\text{cad}[n]=\text{cad}^+[n]\boxplus {\bf c} = \text{cad}^+[n] \cdot R({\bf c}) $$
\noindent and, therefore, $ \text{cad}= \text{cad}^+ \cdot R({\bf c}).$
\end{lem}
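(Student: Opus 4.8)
The plan is to reduce everything to the single combinatorial identity $\text{cad}[n]=\text{con}(\text{cad}^+[n])$, after which the remaining equalities are purely formal consequences of the cone formula $\text{con}(b)={\bf c}\boxplus b=b\boxplus {\bf c}=b\cdot R({\bf c})$ already recorded in this subsection (applied with $b=\text{cad}^+[n]$). Thus the real content is to show, for every $n$ and every $p$, that
$$\text{cad}[n]_p=\text{cad}^+[n]_p+\text{cad}^+[n]_{p-1}\quad(p\geq 0),\qquad \text{cad}[n]_{-1}=\text{cad}^+[n]_{-1},$$
which is exactly the componentwise description of $\text{con}(\text{cad}^+[n])$ coming from the definition of the cone functor.

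First I would set up a bijection at the level of chains. The only difference between $\text{Cad}[n]_p$ and $\text{Cad}^+[n]_p$ is that the former relaxes the initial inclusion $N_{-1}\subseteq N_0$ to allow equality, i.e.\ it permits $N_0=\emptyset$, whereas $\text{Cad}^+[n]_p$ forces $\emptyset=N_{-1}\subsetneq N_0$. So for $p\geq 0$ I would partition $\text{Cad}[n]_p$ according to whether $N_0\neq\emptyset$ or $N_0=\emptyset$. On the first piece every inclusion is strict, so this piece is literally $\text{Cad}^+[n]_p$, contributing $\text{cad}^+[n]_p$. On the second piece, $N_0=\emptyset$ forces the chain to read $\emptyset\subset N_1\subset\cdots\subset N_p\subseteq[n]$ with all remaining inclusions strict; reindexing $M_{-1}:=\emptyset$ and $M_j:=N_{j+1}$ for $0\leq j\leq p-1$ gives a bijection onto $\text{Cad}^+[n]_{p-1}$, contributing $\text{cad}^+[n]_{p-1}$. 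Adding the two pieces yields the displayed identity for $p\geq 0$.

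Next I would dispatch the boundary case $p=-1$. Here the list $N_0,\dots,N_p$ is empty, so both $\text{Cad}[n]_{-1}$ and $\text{Cad}^+[n]_{-1}$ reduce to the single datum $N_{-1}=\emptyset\subseteq[n]$ with all (vacuous) conditions satisfied; hence each has exactly one element and $\text{cad}[n]_{-1}=\text{cad}^+[n]_{-1}$, matching the $i=-1$ clause of $\text{con}$. Combined with the previous paragraph this gives $\text{cad}[n]=\text{con}(\text{cad}^+[n])$ for every $n$.

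Finally I would convert this into the stated chain of equalities and then into matrix form. Taking $b=\text{cad}^+[n]$ in the cone identity immediately yields $\text{con}(\text{cad}^+[n])=\text{cad}^+[n]\boxplus{\bf c}=\text{cad}^+[n]\cdot R({\bf c})$, so all four expressions in the lemma agree row by row. Since $\text{cad}$ and $\text{cad}^+$ are by definition the matrices whose $n$-th rows are $\text{cad}[n]$ and $\text{cad}^+[n]$, and since right multiplication by $R({\bf c})$ acts rowwise, the rowwise equality $\text{cad}[n]=\text{cad}^+[n]\cdot R({\bf c})$ assembles into the matrix identity $\text{cad}=\text{cad}^+\cdot R({\bf c})$. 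The only genuinely delicate point is the index bookkeeping in the first step—keeping straight the roles of $N_{-1}$ and the relaxed versus strict initial inclusion—so I would state the reindexing $M_j=N_{j+1}$ explicitly to make the dimension shift to $\text{cad}^+[n]_{p-1}$ transparent.
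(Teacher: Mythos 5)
Your proof is correct; the paper in fact states Lemma \ref{relacad} without proof, and your argument --- splitting $\text{Cad}[n]_p$ according to whether $N_0=\emptyset$, reindexing the $N_0=\emptyset$ piece onto $\text{Cad}^+[n]_{p-1}$, and then invoking the identity $\text{con}(b)=b\boxplus\mathbf{c}=b\cdot R(\mathbf{c})$ --- is exactly the intended one. The boundary case $p=-1$ and the rowwise assembly into $\text{cad}=\text{cad}^+\cdot R(\mathbf{c})$ are handled correctly as well.
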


\begin{teo} \label{relacadStirling2} The following equality holds true:
$$\breve{\text{cad}}^+[n]_p=(p+1)! \Stirling2{n+1}{ p+1}.$$
\end{teo}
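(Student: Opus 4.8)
The plan is to prove the identity by constructing an explicit bijection between $\breve{\text{Cad}}^+[n]_p$ and the set of \emph{ordered} partitions of $[n]$ into $p+1$ non-empty blocks, and then to count the latter. Recall that an element of $\breve{\text{Cad}}^+[n]_p$ is a chain
$$\emptyset = N_{-1} \subsetneq N_0 \subsetneq N_1 \subsetneq \cdots \subsetneq N_p = [n]$$
with exactly $p+1$ strict inclusions (the strictness at the initial step coming from $N_{-1}\neq N_0$). To such a chain I would associate its family of successive differences $B_i := N_i \setminus N_{i-1}$ for $0 \leq i \leq p$. Since every inclusion $N_{i-1}\subsetneq N_i$ is strict, each block $B_i$ is non-empty; the blocks are pairwise disjoint because the $N_i$ form a chain; and their union telescopes to $N_p \setminus N_{-1} = [n]$. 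Thus $(B_0,\dots,B_p)$ is an ordered partition of $[n]$ into $p+1$ non-empty parts.

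Next I would check that this assignment is a bijection by exhibiting its inverse. Given an ordered partition $(B_0,\dots,B_p)$ of $[n]$ into $p+1$ non-empty parts, set $N_{-1}:=\emptyset$ and $N_i := B_0 \cup \cdots \cup B_i$ for $0\leq i\leq p$. Then $N_{-1}\subsetneq N_0 \subsetneq \cdots \subsetneq N_p = [n]$ is a valid chain, strictness again following from the non-emptiness of the blocks, and the two constructions are visibly mutually inverse. Consequently $\breve{\text{cad}}^+[n]_p = |\breve{\text{Cad}}^+[n]_p|$ equals the number of ordered partitions of the $(n+1)$-element set $[n]$ into $p+1$ non-empty blocks.

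Finally I would count these ordered partitions. By the definition of the Stirling numbers of the second kind recalled in item (s2c), there are exactly $\Stirling2{n+1}{p+1}$ \emph{unordered} partitions of an $(n+1)$-element set into $p+1$ non-empty blocks, and each of these produces precisely $(p+1)!$ distinct ordered partitions by permuting the block labels. Summing over all unordered partitions gives $(p+1)!\,\Stirling2{n+1}{p+1}$, which is the claimed identity. The content of the argument is genuinely routine; the only point requiring care, and hence the main (minor) obstacle, is the index bookkeeping: keeping track that $[n]$ carries $n+1$ elements while a chain with $p+1$ strict inclusions corresponds to $p+1$ blocks, so that the correct Stirling parameters are $n+1$ and $p+1$, and verifying the degenerate boundary cases $n=-1$ and $p=-1$ where the formula collapses to $\Stirling2{0}{0}=1$ or $\Stirling2{0}{p+1}=0$, consistent with the stated conventions.
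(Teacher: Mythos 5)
Your proof is correct and follows essentially the same route as the paper: both arguments pass from a chain to its successive differences $N_i\setminus N_{i-1}$ and then account for the factor $(p+1)!$ by permuting the blocks of a partition of the $(n+1)$-element set $[n]$. Your phrasing via a bijection onto \emph{ordered} partitions (with an explicit inverse) is a slightly cleaner packaging of the paper's surjection onto unordered partitions with fibers of size $(p+1)!$, but the combinatorial content is identical.
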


\begin{proof}
Observe that, by definition of $\breve{\text{Cad}}^+[n]_p$, the elements in $$\{N_0, N_1\setminus N_0, \cdots, N_p\setminus N_{p-1}\}$$ \noindent are non empty and disjoint subsets. Consider the set $\text{Partitions}[n]_p$ consisting of all sets of the form $\{S_0, \cdots, S_p\}$ where every $S_i$ is not empty, $S_i\cap S_j=\emptyset$ for all $i\not=j$, and $S_0\cup \cdots \cup S_p=[n]$. Then, from the definition of Stirling number of second kind (see the introduction section \ref{Stirling}), it follows that
$$|\text{Partitions}[n]_p|=\Stirling2{n+1}{ p+1}.$$
There is a canonical surjective map
$$\text{Dif} \colon  \breve{\text{Cad}}^+[n]_p\to \text{Partitions}[n]_p$$
\noindent sending each chain $\emptyset=N_{-1}\subset N_0\subset N_1 \subset \cdots \subset N_{p}$ to $\{N_0, N_1\setminus N_0, \cdots, N_p\setminus N_{p-1}\}$. Since, in order to construct all the strictly increasing chains associated to a partition, we have to take into account all possible permutations of the members of this partition, we obtain:
$$\breve{\text{cad}}^+[n]_p=(p+1)! \Stirling2{n+1}{ p+1}.$$
\end{proof}

\begin{prop} The following properties hold true:
\begin{itemize}
\item [(i)] If $k<p$, then $\breve{\text{cad}}^+[k]_p=0$.
\item [(ii)] $\text{cad}^+[n]_p=\sum_{k=-1}^n \binom{n+1}{k+1} \breve{\text{cad}}^+[k]_p=\sum_{k=p}^n  \binom{n+1}{k+1} \breve{\text{cad}}^+[k]_p$.
\item [(iii)] $\text{cad}^+=\text{bin} \cdot \breve{\text{cad}}^+$ and $\text{bin}^{-1}\cdot {\text{cad}}^+=\breve{\text{cad}}^+$.
\end{itemize}
  \end{prop}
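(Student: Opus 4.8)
The plan is to prove the three parts in order, with (i) feeding into (ii) and (ii) feeding into (iii). All three are essentially counting statements, and the substance lies in part (ii).

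For part (i), I would argue directly from the combinatorial definition of $\breve{\text{Cad}}^+[k]_p$: any element is a strictly increasing chain $\emptyset=N_{-1}\subset N_0\subset\cdots\subset N_p=[k]$ consisting of $p+1$ strict inclusions, so that $|N_p|\geq p+1$. Since $|[k]|=k+1$, such a chain can exist only when $k+1\geq p+1$, i.e. $k\geq p$; hence $\breve{\text{cad}}^+[k]_p=0$ whenever $k<p$. Alternatively, this is immediate from Theorem \ref{relacadStirling2}, since $\breve{\text{cad}}^+[k]_p=(p+1)!\,\Stirling2{k+1}{p+1}$ and $\Stirling2{k+1}{p+1}=0$ for $p+1>k+1$ by the convention on Stirling numbers recorded in the introduction.

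For part (ii), I would classify the chains in $\text{Cad}^+[n]_p$ according to their top set $N_p$. First fix the cardinality $|N_p|=k+1$; since $N_p\subseteq[n]$ one has $-1\leq k\leq n$, and there are exactly $\binom{n+1}{k+1}$ subsets of $[n]$ of this size. For each fixed $(k+1)$-element subset $S\subseteq[n]$, any bijection $S\cong[k]$ induces a bijection between the strictly increasing chains $\emptyset=N_{-1}\subset N_0\subset\cdots\subset N_p=S$ and the chains counted by $\breve{\text{Cad}}^+[k]_p$, so there are exactly $\breve{\text{cad}}^+[k]_p$ of them. Summing over all admissible sizes gives the first equality $\text{cad}^+[n]_p=\sum_{k=-1}^n\binom{n+1}{k+1}\breve{\text{cad}}^+[k]_p$, and the second equality follows by discarding the terms with $k<p$, which all vanish by part (i).

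For part (iii), I would recognize part (ii) as the statement of a matrix product: writing $\text{bin}_{n,k}=\binom{n+1}{k+1}$ and $(\breve{\text{cad}}^+)_{k,p}=\breve{\text{cad}}^+[k]_p$, we get $(\text{bin}\cdot\breve{\text{cad}}^+)_{n,p}=\sum_k\binom{n+1}{k+1}\breve{\text{cad}}^+[k]_p=\text{cad}^+[n]_p$, so $\text{cad}^+=\text{bin}\cdot\breve{\text{cad}}^+$. Each sum here is finite because every row of $\text{bin}$ is eventually zero (it lies in $((\mathbf{Z}^{\mathbb{N}_+^{op}})_{\text{fin}})^{\mathbb{N}_+}$), so the product is well defined. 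Multiplying on the left by the explicit inverse $\text{bin}^{-1}$, with $\text{bin}^{-1}_{i,j}=(-1)^{i-j}\binom{i+1}{j+1}$, and using $\text{bin}^{-1}\cdot\text{bin}=\mathrm{id}$, yields $\text{bin}^{-1}\cdot\text{cad}^+=\breve{\text{cad}}^+$. The only genuinely delicate point is the bijection in part (ii): I must check that the number of maximal strict chains ending at a fixed finite set depends only on its cardinality, which is precisely the relabeling invariance that makes $\breve{\text{cad}}^+[k]_p$ the correct count for a fixed $(k+1)$-element top set. Everything else is bookkeeping together with the invertibility of $\text{bin}$ already supplied in the excerpt.
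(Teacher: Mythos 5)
Your proposal is correct and follows essentially the same route as the paper: part (i) is the same direct cardinality observation, part (ii) is exactly the paper's ``distribute'' bijection that sorts chains in $\text{Cad}^+[n]_p$ by their top set $N_p$ and uses relabeling invariance to reduce to $\breve{\text{cad}}^+[k]_p$ for each $(k+1)$-element subset, and part (iii) is the same reading of (ii) as the matrix identity $\text{cad}^+=\text{bin}\cdot\breve{\text{cad}}^+$ followed by left-multiplication by $\text{bin}^{-1}$. Your treatment is if anything slightly more explicit than the paper's (e.g.\ on why the sums are finite and why the relabeling count depends only on $|N_p|$), but there is no substantive difference.
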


\begin{proof} (i) Just observe that, if $k<p$, then $\breve{\text{Cad}}^+[k]_p=\emptyset$.

(ii) For every finite set $S=\{s_0, \cdots, s_k\}$, we can obviously define $\breve{\text{Cad}}^+[S]_p$ so that there is a bijective correspondence between $\breve{\text{Cad}}^+[S]_p$ and $\breve{ \text{Cad}}^+[k]_p$ ($|S|=k+1$).
We can consider the bijection
$$\text{distribute} \colon \text{Cad}^+[n]_p \longrightarrow \bigsqcup_{S \subset [n], |S| \geq (p+1)}  \breve{\text{Cad}} ^+[S]_p $$
\noindent where each chain $\emptyset=N_{-1}\subset N_0\subset N_1 \subset \cdots \subset N_p \subseteq [n])$ is carried to
$$ (\emptyset=N_{-1}\subset N_0\subset N_1 \subset \cdots \subset N_p=S) \in  \breve{\text{Cad}} ^+[S]_p. $$

Therefore, we obtain
 $$\text{cad}^+[n]_p =|\text{Cad}^+[n]_p|=  \sum_{k=p}^n \binom{n+1}{k+1} |\breve{\text{Cad}}^+[k]_p|=\sum_{k=p}^n \binom{n+1}{k+1} \breve{\text{cad}}^+[k]_p.$$

(iii) This item follows from (i), (ii) and the definition of matrix multiplication.
\end{proof}

Using the proposition above, one can easily compute the coefficients in the augmented  matrices $\text{cad}^+$, $\breve{\text{cad}}^+$ and $\text{cad}$. Here, we are displaying the first rows of such matrices:
$$
\begin{array}{l | lcccccccc}
\text{cad}^+ &-1& 0 & 1 & 2 & 3 & 4 & 5 & 6 & 7\\\hline
\text{cad}^+[-1]&1& 0 & 0 & 0 & 0 & 0 & 0 & 0 & \cdots \\
\text{cad}^+[0]&1& 1 & 0 & 0 & 0 & 0 & 0 & 0 & \cdots \\
\text{cad}^+[1]&1& 3 & 2 & 0 & 0 & 0 & 0 & 0 & \cdots \\
\text{cad}^+[2]&1& 7 & 12 & 6 & 0 & 0 & 0 & 0 & \cdots \\
\text{cad}^+[3]&1& 15 & 50 & 60 & 24 & 0 & 0 & 0 & \cdots \\
\text{cad}^+[4]&1& 31 & 180 & 390 & 360 & 120 & 0 & 0 & \cdots \\
\text{cad}^+[5]&1& 63 & 602 & 2100 & 3360 & 2520 & 720 & 0 & \cdots \\
\text{cad}^+[6]&1& 127 & 1932 & 10206 & 25200 & 31920 & 20160 & 5040 & \cdots \\
\vdots &\vdots & \vdots  & \vdots  & \vdots  & \vdots & \vdots  & \vdots  & \vdots  & \ddots\\
\end{array}
$$

$$
\begin{array}{l | ccccccccc}
 \breve{\text{cad}}^+ & -1 & 0 & 1 & 2 & 3 & 4 & 5 & 6 & 7 \\ \hline
 \breve{\text{cad}}^+[-1]&1 & 0 & 0 & 0 & 0 & 0 & 0 & 0 & \cdots \\
\breve{\text{cad}}^+[0]& 0 & 1 & 0 & 0 & 0 & 0 & 0 & 0 & \cdots \\
\breve{\text{cad}}^+[1]& 0 & 1 & 2 & 0 & 0 & 0 & 0 & 0 & \cdots \\
\breve{\text{cad}}^+[2]& 0 & 1 & 6 & 6 & 0 & 0 & 0 & 0 & \cdots \\
\breve{\text{cad}}^+[3]& 0 & 1 & 14 & 36 & 24 & 0 & 0 & 0 & \cdots \\
\breve{\text{cad}}^+[4]&0 & 1 & 30 & 150 & 240 & 120 & 0 & 0 & \cdots\\
\breve{\text{cad}}^+[5]& 0 & 1 & 62 & 540 & 1560 & 1800 & 720 & 0 & \cdots \\
\breve{\text{cad}}^+[6]& 0 & 1 & 126 & 1806 & 8400 & 16800 & 15120 & 5040 & \cdots \\
\vdots & \vdots & \vdots & \vdots & \vdots & \vdots & \vdots & \vdots & \vdots & \ddots\\
\end{array}
$$

$$
\begin{array}{l | ccccccccc}
\text{cad} &-1& 0 & 1 & 2 & 3 & 4 & 5 & 6 & 7 \\\hline
\text{cad}[-1]&1& 1 & 0 & 0 & 0 & 0 & 0 & 0 & \cdots \\
\text{cad}[0]&1& 2 & 1 & 0 & 0 & 0 & 0 & 0 & \cdots \\
\text{cad}[1]&1& 4 & 5 & 2 & 0 & 0 & 0 & 0 & \cdots \\
\text{cad}[2]&1& 8 & 19 & 18 & 6 & 0 & 0 & 0 & \cdots \\
\text{cad}[3]&1& 16 & 65& 110 & 84 & 24 & 0 & 0 & \cdots \\
\text{cad}[4]&1& 32 & 211 & 570 & 750 & 480 & 120 & 0 & \cdots \\

\vdots &\vdots & \vdots  & \vdots  & \vdots  & \vdots & \vdots  & \vdots  & \vdots  & \ddots \\
\end{array}
$$

\begin{defi} The \emph{barycentric subdivision of} $a\in ({\bf Z}^{\mathbb{N}_+^{op}})_{\text{fin}}$ is the augmented sequence defined as
$\text{sd} (a):=a \tilde{\triangleright}\text{cad}^+.$ This construction gives rise to a functor $$\text{sd} \colon ({\bf Z}^{\mathbb{N}_+^{op}})_{\text{fin}} \to ({\bf Z}^{\mathbb{N}_+^{op}})_{\text{fin}}$$
\end{defi}

As $\text{bin}^{-1} \cdot  \text{cad}^+=\breve{\text{cad}}^+$, we also denote $\breve{\text{sd}}=\breve{\text{cad}}^+$ and we have an alternative description of $\text{sd} (a)$:
\begin{cor}  The barycentric subdivision of an augmented integer sequence $a\in  ({\bf Z}^{\mathbb{N}_+^{op}})_{\text{fin}}$ can be described by
$\text{sd}(a)= a \cdot \breve{\text{cad}}^+ = a \cdot \breve{\text{sd}}.$
\end{cor}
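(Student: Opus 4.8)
The plan is to unwind the two definitions and reduce everything to the identity $\text{bin}^{-1}\cdot\text{cad}^+=\breve{\text{cad}}^+$, which is exactly part (iii) of the Proposition immediately preceding this statement. First I would recall that, by Definition \ref{algebraicaction}, the barycentric subdivision of $a$ is $\text{sd}(a)=a\,\tilde{\triangleright}\,\text{cad}^+=(a\cdot\text{bin}^{-1})\cdot\text{cad}^+$. Since $a\in({\bf Z}^{\mathbb{N}_+^{op}})_{\text{fin}}$ has finite support, the row sequence $a\cdot\text{bin}^{-1}$ is well defined, and because $\text{cad}^+\in({\bf Z}^{\mathbb{N}_+^{op}})^{\mathbb{N}_+}$ the further product with $\text{cad}^+$ also makes sense; thus the triple product $(a\cdot\text{bin}^{-1})\cdot\text{cad}^+$ is legitimate.

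The central step is to invoke associativity of the dot product, which (as noted in the Remark following Definition \ref{algebraicaction}) holds whenever the products are defined. This lets me rewrite $\text{sd}(a)=(a\cdot\text{bin}^{-1})\cdot\text{cad}^+$ as $a\cdot(\text{bin}^{-1}\cdot\text{cad}^+)$. I would then substitute the identity $\text{bin}^{-1}\cdot\text{cad}^+=\breve{\text{cad}}^+$ from part (iii) of the previous Proposition to obtain $\text{sd}(a)=a\cdot\breve{\text{cad}}^+$. Finally, since $\breve{\text{sd}}$ was introduced as merely another name for $\breve{\text{cad}}^+$, the second asserted equality $a\cdot\breve{\text{cad}}^+=a\cdot\breve{\text{sd}}$ holds by definition, completing the argument.

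The only genuine subtlety — and the step I would check most carefully — is the legitimacy of the associativity rearrangement. Concretely, one must confirm that the finite support of $a$ guarantees that each entry of $(a\cdot\text{bin}^{-1})\cdot\text{cad}^+$ and of $a\cdot(\text{bin}^{-1}\cdot\text{cad}^+)$ is a finite sum, so that reindexing the double sum $\sum_{k,l} a_k\,(\text{bin}^{-1})_{k,l}\,(\text{cad}^+)_{l,m}$ carries no convergence ambiguity. Because $a_k=0$ for all but finitely many $k$ and $\text{bin}^{-1}$ is lower triangular, every inner sum collapses to a finite one, so the reordering is valid. Everything else is a direct substitution, and no further obstacle remains once this finiteness is observed.
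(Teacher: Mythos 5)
Your proposal is correct and follows essentially the same route as the paper: the corollary is justified there by the sentence immediately preceding it, which invokes the identity $\text{bin}^{-1}\cdot\text{cad}^+=\breve{\text{cad}}^+$ from part (iii) of the preceding proposition and the associativity of the dot product applied to $\text{sd}(a)=(a\cdot\text{bin}^{-1})\cdot\text{cad}^+$. Your additional check that finite support of $a$ and lower-triangularity of $\text{bin}^{-1}$ make the reindexing legitimate is a welcome (if routine) precision that the paper leaves implicit.
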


\begin{obs}

Given any augmented sequence $c=(c_{-1}, c_0, c_1, \cdots, c_n, 0,\cdots )\in  ({\bf Z}^{\mathbb{N}_+^{op}})_{\text{fin}}$ we have the formula
$$(\text{sd}c)_j=\sum_{i=-1}^{n}(j+1)! c_{i} \Stirling2{i+1}{j+1}.$$
In particular, the first three terms of this sequence are:
$$\begin{array}{l} (\text{sd}c)_{-1}=\sum_{i=-1}^{n}0!c_{i} \Stirling2{i+1} {0})= c_{-1}+0+\cdots 0=c_{-1} \\
(\text{sd}c)_0=\sum_{i=-1}^{n}1!c_{i} \Stirling2{i+1}{ 1}=c_0+\cdots+c_n \\
(\text{sd}c)_1=\sum_{i=-1}^{n}2!c_{i} \Stirling2{i+1}{ 2}= 2 c_{1}+ 2 c_2 \Stirling2{3}{2}+\cdots+2 c_n \Stirling2{n+1}{2}.
\end{array}$$


\end{obs}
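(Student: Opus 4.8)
The plan is to obtain the formula by combining the matricial description of the subdivision functor with the closed form for the entries of $\breve{\text{cad}}^+$ furnished by Theorem \ref{relacadStirling2}. First I would invoke the Corollary immediately preceding this remark, which identifies the barycentric subdivision of a finitely supported sequence with a dot product, $\text{sd}(c)=c\cdot \breve{\text{cad}}^+=c\cdot\breve{\text{sd}}$. Writing $c=(c_{-1},c_0,\dots,c_n,0,\dots)$ and unwinding the definition $(c\cdot B)_j=\sum_{k\in\mathbb{N}_+}c_k B_{k,j}$ of the dot product, together with the fact that the $i$-th row of $\breve{\text{cad}}^+$ is the sequence $\breve{\text{cad}}^+[i]$, gives
$$(\text{sd}\,c)_j=\sum_{i=-1}^{n} c_i\,\breve{\text{cad}}^+[i]_j,$$
where the terms with $i>n$ drop out because the corresponding coefficients $c_i$ vanish.

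Next I would substitute the value of each entry supplied by Theorem \ref{relacadStirling2}, namely $\breve{\text{cad}}^+[i]_j=(j+1)!\,\Stirling2{i+1}{j+1}$, and factor the quantity $(j+1)!$, which is independent of the summation index $i$, through the sum. This yields exactly the asserted identity
$$(\text{sd}\,c)_j=\sum_{i=-1}^{n}(j+1)!\,c_i\,\Stirling2{i+1}{j+1}.$$
For the three displayed special cases I would simply read off the boundary values of the Stirling numbers from the convention recalled in item (s2c) of the introduction: for $j=-1$ one has $0!=1$ and $\Stirling2{i+1}{0}$ equals $1$ only when $i=-1$ and $0$ otherwise, so $(\text{sd}\,c)_{-1}=c_{-1}$; for $j=0$ one uses $\Stirling2{i+1}{1}=1$ for all $i\geq 0$ while $\Stirling2{0}{1}=0$, giving $(\text{sd}\,c)_0=c_0+\cdots+c_n$; and for $j=1$ one uses $\Stirling2{i+1}{2}=0$ for $i\in\{-1,0\}$ and $\Stirling2{2}{2}=1$, which isolates the leading term and leaves $(\text{sd}\,c)_1=2c_1+2c_2\Stirling2{3}{2}+\cdots+2c_n\Stirling2{n+1}{2}$.

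Since each step is a direct substitution, there is no serious obstacle in this argument. The only point demanding care is to apply the boundary conventions for $\Stirling2{p}{q}$ consistently at the augmentation index $i=-1$ and at $q=0$, so that the degenerate terms vanish precisely as required; this is what makes the $j=-1$ and $j=0$ columns behave as the identity-like and summation operators recorded above.
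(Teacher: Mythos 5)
Your argument is correct and is exactly the route the paper intends: the remark is an immediate consequence of the preceding Corollary $\text{sd}(c)=c\cdot\breve{\text{cad}}^+$ together with Theorem \ref{relacadStirling2}, and your handling of the boundary conventions for $\Stirling2{p}{q}$ in the three special cases matches the displayed values. Nothing is missing.
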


\subsection{Comparing geometric and arithmetic constructions through the sequential cardinal functor}

From propositions \ref{cilCil}, \ref{cilCilcero} \ref{cilCildos}, we already know that $\text{cil}= |\text{Cil}|$, $\text{cil}_0= |\text{Cil}_0|$ and $\text{cil}_2= |\text{Cil}_2|$.
The following theorem gives an interesting relationship between geometric and arithmetic cylinders:

\begin{teo} If $X \in (\mathbf{Sets}_{\text{fin}})^{\mathbf{\Gamma}_+^{op}}$, then

\begin{itemize}
  \item[(i)] $|\text{Cil} (X)|=|X \tilde{ \vartriangleright} \text{Cil}|=|X| \tilde{\triangleright} |\text{Cil}|$,
    \item[(ii)]   $|\text{Cil}_0 (X)|=|X \tilde{ \vartriangleright} \text{Cil}_0|=|X| \tilde{\triangleright} |\text{Cil}_0|$,
  \item[(iii)] $|\text{Cil}_2 (X)|=|X \tilde{ \vartriangleright} \text{Cil}_2|=|X| \tilde{\triangleright} |\text{Cil}_2|$.
\end{itemize}
Therefore, the following diagrams are commutative:
  $$
\xymatrix{\ar[d]^{|\cdot|} (\mathbf{Sets}_{\text{fin}})^{\mathbf{\Gamma}_+^{op}} \ar[r]^{\text{Cil}} & (\mathbf{Sets}_{\text{fin}})^{\mathbf{\Gamma}_+^{op}}\ar[d]^{|\cdot|} \\
{\bf Z}^{{\mathbb{N}_+^{op}}}\ar[r]^{\text{cil}} & {\bf Z}^{{\mathbb{N}_+^{op}}}}\quad
\xymatrix{\ar[d]^{|\cdot|} (\mathbf{Sets}_{\text{fin}})^{\mathbf{\Gamma}_+^{op}} \ar[r]^{\text{Cil}_0} & (\mathbf{Sets}_{\text{fin}})^{\mathbf{\Gamma}_+^{op}}\ar[d]^{|\cdot|} \\
{\bf Z}^{{\mathbb{N}_+^{op}}}\ar[r]^{\text{cil}_0} & {\bf Z}^{{\mathbb{N}_+^{op}}}}$$
$$\xymatrix{\ar[d]^{|\cdot|} (\mathbf{Sets}_{\text{fin}})^{\mathbf{\Gamma}_+^{op}} \ar[r]^{\text{Cil}_2} & (\mathbf{Sets}_{\text{fin}})^{\mathbf{\Gamma}_+^{op}}\ar[d]^{|\cdot|} \\
{\bf Z}^{{\mathbb{N}_+^{op}}}\ar[r]^{\text{cil}_2} & {\bf Z}^{{\mathbb{N}_+^{op}}}.}
$$

\end{teo}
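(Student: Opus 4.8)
The plan is to deduce all three items directly from Theorem~\ref{tildebigodottildedot} together with the Remark following it, since each cylinder functor is \emph{defined} as an action $(-)\,\tilde{\vartriangleright}\,Z$ with $Z\in\{\text{Cil},\text{Cil}_0,\text{Cil}_2\}$, and Propositions~\ref{cilCil}, \ref{cilCilcero}, \ref{cilCildos} already supply $|\text{Cil}|=\text{cil}$, $|\text{Cil}_0|=\text{cil}_0$, $|\text{Cil}_2|=\text{cil}_2$. In each item the first equality is just the defining formula $\text{Cil}(X)=X\,\tilde{\vartriangleright}\,\text{Cil}$ (and analogously for the other two), so the entire content lies in the middle equality $|X\,\tilde{\vartriangleright}\,Z|=|X|\,\tilde{\triangleright}\,|Z|$. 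Once this is established, substituting $|Z|=\text{cil}$ (respectively $\text{cil}_0$, $\text{cil}_2$) and recalling that $\text{cil}(|X|)=|X|\,\tilde{\triangleright}\,\text{cil}$ by definition of the standard cylinder of a sequence (and likewise for the other two) turns the middle equality into the asserted commutativity of the three diagrams.

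To apply Theorem~\ref{tildebigodottildedot} I first confirm that $\text{Cil}$, $\text{Cil}_0$, $\text{Cil}_2$ land in $(\mathbf{Sets}_{\text{fin}})^{\mathbf{\Gamma}_+^{op}}$ and are regular. Dimensionwise finiteness is immediate, as each $\text{Cil}\Gamma_+[n]_m$ (and the $\text{Cil}_0$, $\text{Cil}_2$ analogues) is a set of pairs of simplices of $\Gamma_+[n]$. For regularity I must verify that for every $\varphi\colon[n]\to[n']$ in $\mathbf{\Gamma}_+$ the induced map is injective in each dimension; it sends $(\sigma,\tau)$ to $(\varphi\sigma,\varphi\tau)$, and since $\varphi$ is strictly increasing, hence injective, the equality $(\varphi\sigma,\varphi\tau)=(\varphi\sigma',\varphi\tau')$ forces $(\sigma,\tau)=(\sigma',\tau')$. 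One also checks that post-composition with $\varphi$ preserves the relations $\preccurlyeq$ and $\prec$: injectivity of $\varphi$ leaves $|\sigma([p])\cap\tau([q])|$ unchanged and strict monotonicity preserves the inequalities $\sigma(i)\le\tau(j)$ and $\sigma(i)<\tau(j)$, while for $\text{Cil}_2=\Gamma_+[n]\boxplus\Gamma_+[n]$ there is no relation to preserve and injectivity holds factorwise on the join. Thus all three functors are regular.

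Because the statement only assumes $X\in(\mathbf{Sets}_{\text{fin}})^{\mathbf{\Gamma}_+^{op}}$ (finite in each dimension, but possibly of infinite dimension), I invoke the Remark after Theorem~\ref{tildebigodottildedot} rather than the bare theorem; this needs, for each $q$, a bound $n_q$ with $|\breve{Z}([n])_q|=0$ for all $n\ge n_q$. Here the sets $\breve{Z}([n])$ (obtained by removing the images of all face maps) coincide with the complements $\breve{\text{Cil}}\Gamma_+[n]=\text{Cil}\Gamma_+[n]\setminus\text{Cil}\partial\Gamma_+[n]$ and their $\text{Cil}_0$, $\text{Cil}_2$ analogues. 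Granting this identification, Proposition~\ref{sucesioncardinalCil}(i) gives $|\breve{\text{Cil}}\Gamma_+[n]|_q\ne0$ only for $q\in\{n,n+1\}$, Proposition~\ref{Cilcerocal}(i) gives $|\breve{\text{Cil}}_0\Gamma_+[n]|_q\ne0$ only for $q=n$, and the formula $|\breve{\text{Cil}}_2\Gamma_+[n]|_m=\sum_{i=-1}^{n}\binom{n+1}{i+1}\binom{i+1}{m-n}$ vanishes unless $n\le m\le 2n+1$; in all three cases $n_q=q+1$ suffices.

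The only step demanding genuine care is that identification of $\breve{Z}([n])$ with the earlier combinatorial complements, so that the cited cardinality computations can be reused. Once it is in place, the Remark yields $|X\,\tilde{\vartriangleright}\,Z|=|X|\,\tilde{\triangleright}\,|Z|$ for each $Z\in\{\text{Cil},\text{Cil}_0,\text{Cil}_2\}$, which is precisely items (i)--(iii) and hence the commutativity of the diagrams. I expect no obstacle beyond this bookkeeping, since regularity and the vanishing bounds are both straightforward.
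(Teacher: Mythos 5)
Your proof is correct and follows essentially the same route as the paper, whose own proof simply cites Theorem \ref{tildebigodottildedot} (implicitly together with the Remark that follows it) and relies on Propositions \ref{cilCil}, \ref{cilCilcero} and \ref{cilCildos} for $|\text{Cil}|=\text{cil}$, $|\text{Cil}_0|=\text{cil}_0$, $|\text{Cil}_2|=\text{cil}_2$. The only difference is that you explicitly verify the hypotheses---regularity of the three co-semi-simplicial objects and the vanishing bounds $n_q=q+1$ needed to apply the Remark when $X$ is only dimensionwise finite---which the paper leaves implicit.
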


\begin{proof} (i), (ii) and (iii) follow from Theorem \ref{tildebigodottildedot}, the commutativity of the diagrams being a direct consequence.
\end{proof}

\begin{ej} Consider $H$ the augmented semi-simplicial hexaedron. Then (see Figure \ref{cerocilcildoscil}):
\begin{itemize}
\item  $|{{\rm{Cil}}} (H)|= {{\rm{Cil}}} (|H|)=|H|\cdot \breve{{{\rm{Cil}}}}$, and this matrix multiplication is
  $$|H|\cdot \breve{{{\rm{Cil}}}} =\\(1,6,6,0,0,0,\cdots ) \left(\begin{array}{ccccc}1 & 0 & 0 & 0 &\cdots \\0 & 2 & 1 &0 &\cdots \\0 & 0 & 3 & 2 &\cdots \\ \vdots &  \vdots  &  \vdots  &  \vdots  &\ddots \end{array}\right)= (1,12,24,12,0,0,0,\cdots )$$

\item  $|{{\rm{Cil}}_0} (H)|= {{\rm{Cil}}}_0 (|H|)=|H|\cdot \breve{{{\rm{Cil}}}}_0 $ and
  $$|H|\cdot \breve{{{\rm{Cil}}}}_0=\\(1,6,6,0,0,0,\cdots ) \left(\begin{array}{ccccc}1 & 0 & 0 & 0 &\cdots  \\0 & 2 & 0 &0 &\cdots   \\0 & 0 & 3 & 0 &\cdots \\ \vdots &  \vdots  &  \vdots  &  \vdots  &\ddots \end{array}\right)= (1,12,18,0,0,0,\cdots )$$
\item  $|{{\rm{Cil}}_2} (H)|= {{\rm{Cil}}}_2 (|H|)=|H|\cdot \breve{{{\rm{Cil}}}}_2$ and
$$|H|\cdot \breve{{{\rm{Cil}}}}_2  =\\(1,6,6,0,0,0,\cdots ) \left(\begin{array}{cccccc}1 & 0 & 0 & 0 &0 &\cdots \\0 & 2 & 1 &0 & 0 &\cdots   \\0 & 0 & 4 & 4 & 1 &\cdots \\ \vdots &  \vdots  &  \vdots  &  \vdots  & \vdots  &\ddots \end{array}\right)= (1,12,30,24,6,0,0,0,\cdots )$$

\end{itemize}
\end{ej}

\begin{figure}[htbp]
\begin{center}
\includegraphics[scale=0.7]{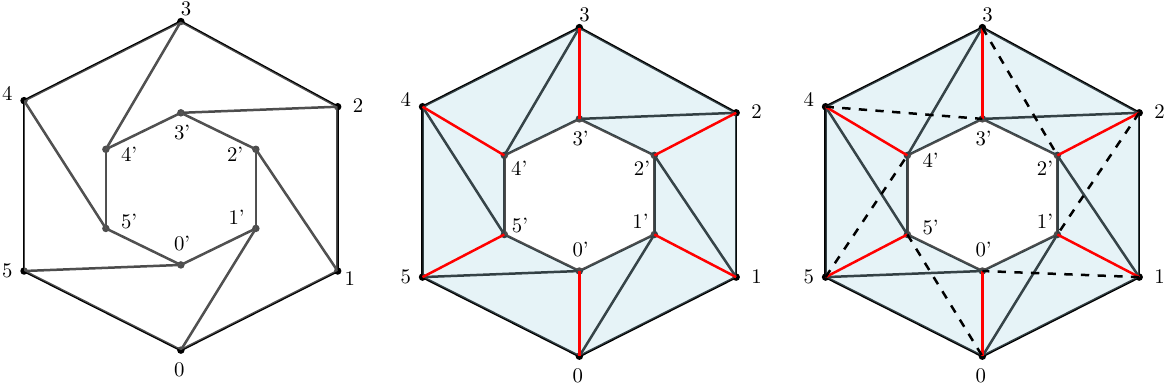}
\caption{From left to right ${{\rm{Cil}}}_0 (H)\subset {{\rm{Cil}}} (H) \subset  {{\rm{Cil}}}_2 (H)$}
\label{cerocilcildoscil}
\end{center}
\end{figure}


\bigskip
Now we compare geometric and arithmetic subdivisions through the sequential cardinal functor. We start with this simple lemma:

\begin{lem}\label{Sdcadplus} $|\text{Sd}{\Gamma_+}[n]|=\text{cad}^+[n]$, for all $n \in \mathbb{N}_+$.
\end{lem}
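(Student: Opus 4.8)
The plan is to prove the equality of augmented sequences degreewise, by constructing for each $m\in\mathbb{N}_+$ an explicit bijection
$$\Phi\colon \text{Sd}{\Gamma_+}[n]_m \longrightarrow \text{Cad}^+[n]_m,$$
so that $|\text{Sd}{\Gamma_+}[n]_m|=|\text{Cad}^+[n]_m|=\text{cad}^+[n]_m$ for every $m$, which is exactly the asserted identity $|\text{Sd}{\Gamma_+}[n]|=\text{cad}^+[n]$.

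First I would recall that an $m$-simplex of $\text{Sd}{\Gamma_+}[n]$ is a chain of composable morphisms $[-1]\xrightarrow{\varphi_{-1}}[k_0]\xrightarrow{\varphi_0}\cdots\xrightarrow{\varphi_m}[k_{m+1}]=[n]$ in $\mathbf{\Gamma_+}$, where each $\varphi_i$ is strictly increasing and hence injective, and $0\le k_0<\cdots<k_m\le n$. To such a chain I attach the flag of images obtained by composing down to the terminal object: for $-1\le i\le m$ put $\psi_i:=\varphi_m\circ\cdots\circ\varphi_i\colon [k_i]\to[n]$ (with $[k_{-1}]:=[-1]$) and $N_i:=\mathrm{Im}(\psi_i)\subseteq[n]$. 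Injectivity of each $\varphi_i$ together with $\psi_i=\psi_{i+1}\circ\varphi_i$ gives $N_i\subseteq N_{i+1}$, and $|N_i|=k_i+1<k_{i+1}+1=|N_{i+1}|$ makes the inclusions strict; moreover $N_{-1}=\emptyset$ and $N_m\subseteq[n]$. Thus $\Phi$ sends the chain to $\emptyset=N_{-1}\subsetneq N_0\subsetneq\cdots\subsetneq N_m\subseteq[n]$, a genuine element of $\text{Cad}^+[n]_m$.

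To invert $\Phi$ I would use the basic fact that the morphisms of $\mathbf{\Gamma_+}$ are precisely the order-preserving injections, so every subset $N\subseteq[n]$ with $|N|=k+1$ is the image of a unique strictly increasing map $[k]\to[n]$. Given a flag $\emptyset=N_{-1}\subsetneq N_0\subsetneq\cdots\subsetneq N_m\subseteq[n]$, set $k_i:=|N_i|-1$ (so $0\le k_0<\cdots<k_m\le n$), let $\psi_i$ be the unique order-preserving injection with image $N_i$, and recover $\varphi_i$ as the unique strictly increasing map with $\psi_{i+1}\circ\varphi_i=\psi_i$ (it exists because $N_i\subseteq N_{i+1}$ and $\psi_{i+1}$ is injective), with $\varphi_{-1}$ the unique arrow out of $[-1]$. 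These two assignments are mutually inverse, giving the bijection. Conceptually this is just the identification of the simplices of a barycentric subdivision with the chains in the poset of faces, here including the empty face $[-1]$ as a forced bottom element; alternatively one could derive the identity inductively from the cone decomposition of Proposition \ref{ConoSubdivision}.

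The argument is routine once the correspondence is set up, so the only real care goes into index bookkeeping and the boundary degrees, which I would flag as the main obstacle. Concretely I would check that for $m=-1$ both sides are the singleton given by the trivial flag $N_{-1}=\emptyset$ versus the unique map $\emptyset\to[n]$, and that for $m>n$ both sides are empty: on the subdivision side by definition, and on the chain side because a strict flag $\emptyset\subsetneq N_0\subsetneq\cdots\subsetneq N_m\subseteq[n]$ forces $m+1\le|N_m|\le n+1$, i.e.\ $m\le n$. These checks confirm that the dimension ranges agree and that $\Phi$ is a bijection in every degree, completing the proof.
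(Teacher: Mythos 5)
Your proposal is correct and takes essentially the same route as the paper: the paper's proof is exactly the bijection sending a chain of composable morphisms to the flag of images of the composites down to $[n]$, stated in two lines without the explicit inverse or boundary checks. Your version merely makes precise (via the maps $\psi_i=\varphi_m\circ\cdots\circ\varphi_i$) what the paper writes loosely as $\mathrm{im}(\varphi_i)$, so no substantive difference.
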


\begin{proof}
Associated to any $a \in \text{Sd}{\Gamma_+}[n]_m$, represented as
$$\emptyset =[-1]\stackrel{\varphi _{-1}}{\longrightarrow }[k_0]\stackrel{\varphi _0}{\longrightarrow }[k_1]\stackrel{\varphi _1}{\longrightarrow }\cdots
\stackrel{\varphi _{m-1}}{\longrightarrow }[k_m]\stackrel{\varphi _m}{\longrightarrow }[k_{m+1}]=[n],$$
\noindent we consider the subset chain
$${\emptyset=[-1] \subset \text{im}(\varphi_0) \subset \text{im}(\varphi_1) \subset \cdots \subset \text{im}(\varphi_m)\subset [n]}$$
This correspondence gives a bijection between $\text{Sd}{\Gamma_+}[n]_m$ and $\text{Cad}^+[n]_m$, for all $m$.
\end{proof}

\bigskip
Finally, we give a result giving a nice relationship between geometric and arithmetic subdivisions.
We use the notation $\text{sd}\gamma_+[n]:=|\text{Sd} ({\Gamma_+}[n])|$ and take $\text{sd}:=|\text{Sd}|$, the correspondent matrix.

\begin{teo} \label{cadtheorem}If $X \in (\mathbf{Sets}^{\mathbf{\Gamma}_+^{op}})_{\text{fin}}$ and $a \in ({\bf Z}^{{\mathbb{N}_+^{op}}})_{\text{fin}}$, then

\begin{itemize}
\item[(i)] $|\text{Sd} (X)|=|X \tilde{ \vartriangleright} \text{Sd}|=|X| \tilde{\triangleright} |\text{Sd}|$,
\item[(ii)] $\text{sd}=|\text{Sd}|=\text{cad}^+$,
\item[(iii)] $\text{sd}(a)= a \tilde{\triangleright} \text{sd}= a \cdot \breve{\text{sd}}$.
\end{itemize}
Moreover, the following diagram is commutative:
  $$
\xymatrix{\ar[d]^{|\cdot|} (\mathbf{Sets}^{\mathbf{\Gamma}_+^{op}})_{\text{fin}}\ar[r]^{\text{Sd}} &(\mathbf{Sets}^{\mathbf{\Gamma}_+^{op}})_{\text{fin}}\ar[d]^{|\cdot|} \\
({\bf Z}^{{\mathbb{N}_+^{op}}})_{\text{fin}}\ar[r]^{\text{sd}} & ({\bf Z}^{{\mathbb{N}_+^{op}}})_{\text{fin}}}
$$
\end{teo}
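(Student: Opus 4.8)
The plan is to assemble three facts already established in the excerpt: Theorem~\ref{tildebigodottildedot}, which says the sequential cardinal functor turns the geometric action $\tilde{\vartriangleright}$ into the algebraic triangle product $\tilde{\triangleright}$; Lemma~\ref{Sdcadplus}, which identifies $|\text{Sd}{\Gamma_+}[n]|$ with $\text{cad}^+[n]$; and the algebraic definition $\text{sd}(a):=a\,\tilde{\triangleright}\,\text{cad}^+$ together with its Corollary $a\,\tilde{\triangleright}\,\text{sd}=a\cdot\breve{\text{sd}}$. Essentially, parts (i)--(iii) are three repackagings of these results, and the commutative diagram is an immediate restatement of (i) combined with (ii).

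Before anything else I would check that the co-semi-simplicial object $\text{Sd}$ lies in the domain of Theorem~\ref{tildebigodottildedot}, i.e.\ that it is regular and takes finite values. Finiteness is clear since each $\text{Sd}{\Gamma_+}[n]$ is a finite augmented semi-simplicial set. For regularity I would argue that, under the bijection of Lemma~\ref{Sdcadplus} between $\text{Sd}{\Gamma_+}[n]_m$ and the chains $\emptyset\subset N_0\subset\cdots\subset N_m\subseteq[n]$, every morphism $\varphi\colon[n]\to[n']$ in $\mathbf{\Gamma}_+$ acts by $N_i\mapsto\varphi(N_i)$; since $\varphi$ is strictly increasing, hence injective, this preserves strict inclusions and sends distinct chains to distinct chains, so $\text{Sd}(\varphi)$ is injective in each dimension. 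Thus $\text{Sd}\in((\mathbf{Sets}_{\text{fin}})^{\mathbf{\Gamma}_+^{op}})_{\text{reg}}^{\mathbf{\Gamma}_+}$.

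With this in hand, part (i) follows: Definition~\ref{subd-semi} gives $\text{Sd}(X)=X\,\tilde{\vartriangleright}\,\text{Sd}$, and since $X$ is finite and $\text{Sd}$ is regular, Theorem~\ref{tildebigodottildedot} yields $|X\,\tilde{\vartriangleright}\,\text{Sd}|=|X|\,\tilde{\triangleright}\,|\text{Sd}|$. Part (ii) is then almost tautological: $\text{sd}=|\text{Sd}|$ by the chosen notation, and $|\text{Sd}|$ is by construction the matrix whose $n$-th row is $|\text{Sd}{\Gamma_+}[n]|$, which equals $\text{cad}^+[n]$ by Lemma~\ref{Sdcadplus}; row by row this is exactly $\text{cad}^+$. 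For part (iii) I would substitute (ii) into the algebraic definition $\text{sd}(a)=a\,\tilde{\triangleright}\,\text{cad}^+=a\,\tilde{\triangleright}\,\text{sd}$, and then invoke $\text{bin}^{-1}\cdot\text{cad}^+=\breve{\text{cad}}^+=\breve{\text{sd}}$ (item (iii) of the earlier proposition on chain numbers) to rewrite this as $a\cdot\breve{\text{sd}}$. The commutativity of the diagram is the diagrammatic form of (i) together with (ii).

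The only genuine step, and hence the main obstacle, is the regularity verification for $\text{Sd}$: Theorem~\ref{tildebigodottildedot} cannot be applied blindly, and one must confirm that subdivision of morphisms is injective in each dimension. A secondary point worth noting is that, unlike the cylinder matrices, $\text{cad}^+$ has columns that are not eventually zero; this is precisely why part (iii) is stated for $a\in({\bf Z}^{\mathbb{N}_+^{op}})_{\text{fin}}$, so that the products $a\cdot\breve{\text{sd}}$ and $a\,\tilde{\triangleright}\,\text{sd}$ involve only finite sums and are unambiguously defined.
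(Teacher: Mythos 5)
Your proposal follows exactly the paper's own proof: part (i) is Definition~\ref{subd-semi} plus Theorem~\ref{tildebigodottildedot}, part (ii) is Lemma~\ref{Sdcadplus} read row by row, and part (iii) and the diagram are formal consequences. The regularity of $\text{Sd}$ that you verify explicitly is indeed needed to invoke Theorem~\ref{tildebigodottildedot} but is left implicit in the paper, so your check is a welcome (and correct) addition rather than a deviation.
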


\begin{proof} (i) follows by Definition \ref{subd-semi} and Theorem \ref{tildebigodottildedot}.
By Lemma \ref{Sdcadplus}, we have $|\text{Sd}{\Gamma_+}[n]_m|=\text{cad}^+[n]_m$, and therefore $|\text{Sd}{\Gamma_+}[n]|=\text{cad}^+[n]$ and $|\text{Sd}|=\text{cad}^+$; so (ii) holds. Finally, (iii) and the commutativity of the diagram are consequences of (i) and (ii).

\end{proof}

\section{Conclusions and future work}
In this work, we have demonstrated how the subdivision and cylinder constructions for semi-simplicial sets can be obtained by taking certain actions of appropriate co-semi-simplicial objects. Additionally, we have discussed the computation of the sequential cardinality of cylinders and the sequential cardinality of barycentric subdivisions.

The sequential cardinality of cylinders can be easily computed using locally finite matrices, which are matrices with rows and columns that are eventually zero. Furthermore, the sequential cardinality of the barycentric subdivision can be computed using chain-power numbers and Stirling numbers (see \cite{Co74} and \cite{Schader1980}).

Another interesting objective would be to construct a categorical semi-ring (or symmetric bimonoidal category structure)  $$((\mathbf{Sets}_{\text{fin}})^{\mathbf{\Gamma}_+^{op}},  \boxplus, \bigodot,   \Gamma_+[-1],  \Gamma_+[0])$$ that satisfies the following properties:
\begin{itemize}
  \item[] If $X, Y\in (\mathbf{Sets}_{\text{fin}})^{\mathbf{\Gamma}_+^{op}}$ and $\text{dim}(X), \text{dim}(Y)$ are finite, then
  $$ \text{dim}(X \bigodot Y)=(\text{dim}(X)+1) (\text{dim}(Y)+1) -1.$$
  \item[ ]If $X, Y, Z \in (\mathbf{Sets}_{\text{fin}})^{\mathbf{\Gamma}_+^{op}}$, then
   $$(X \boxplus Y) \bigodot Z \cong (X\bigodot  Z) \boxplus  (Y\bigodot  Z). $$
\end{itemize}

If we are able to develop this construction, we will analyze the following:

(i) the relationship between $|X \bigodot Y|$ and $|X|$ and $|Y|$.

Another intriguing aim is to consider the Betti sequences of a finite augmented semi-simplicial set $\beta(X)=(\beta_{-1}(X), \beta_{0}(X), \beta_{1}(X), \cdots )$ and to study

(iii)  the relationship between $\beta(X \boxplus Y)$ and $\beta(X)$ and  $\beta(Y)$, and

(iii)  the relationship between $\beta(X \bigodot Y)$ and $\beta(X)$ and  $\beta(Y)$.

{\bf Funding:} {This research has been funded by the project PID2020-118753GB-I00 of the Spanish Ministry of Science and Innovation, the project REGI22-63 of the University of La Rioja, and the University of La Laguna.}

%

\end{document}